\renewcommand*\env@matrix[1][c]{\hskip -\arraycolsep
  \let\@ifnextchar\new@ifnextchar
  \array{*\c@MaxMatrixCols #1}}
\newtheorem{theorem}{Theorem}[section] 
\newtheorem{lemma}[theorem]{Lemma}
\newtheorem{corollary}[theorem]{Corollary}
\newtheorem{ex}[theorem]{Example}
\newtheorem{question}[theorem]{Question}
\theoremstyle{definition}
\newtheorem{example}[theorem]{Example}
\theoremstyle{definition}
\theoremstyle{definition}
\theoremstyle{plain}
\theoremstyle{remark}
\newtheorem{intthm}{Theorem}
\newcommand{\tcr}[1]{\textcolor{red}{#1}}
\newcommand{\tcb}[1]{\textcolor{blue}{#1}}
\newcommand{\tcc}[1]{\textcolor{cyan}{#1}}
\newcommand{\cost}[1]{\text{cost}\left( #1\right )}
\newcommand{\diam}[1]{\text{diam}\left( #1\right )}
\newcommand{\rad}[1]{\text{rad}\left( #1\right )}
\newcommand{\TG}[2]{C_{#1}\Box C_{#2} }
\newcommand{\grid}[2]{G_{#1,#2} }
\newcommand{\floor}[1]{\left \lfloor #1 \right\rfloor}
\begin{document}

\author{Erik Insko}

\address{Erik Insko, Department of Mathematics\\
Florida Gulf Coast University\\
10501 FGCU Blvd. South\\
Fort Myers, FL 33965-6565}

\email{}

\urladdr{}

\author{Bethany Kubik}

\address{Bethany Kubik, Department of Mathematics and Statistics\\
University of Minnesota Duluth\\
140 Solon Campus Center\\
1117 University Drive\\
Duluth, MN 55812-3000}

\email{}

\urladdr{}

\author{Candice Price}

\address{Candice Price, Department of Mathematics\\
University of San Diego\\
Serra Hall 133\\
5998 Alcal\'{a} Park\\
San Diego, CA 92110}

\email{}

\urladdr{}

\title{Upper Broadcast Domination of Toroidal Grids and a 
Classification of Diametrical Trees}

\keywords{upper domination, upper broadcast domination, 
toroidal grids, diametrical trees, lobster graphs}

\subjclass[2010]{05C69, 05C05}

\begin{abstract}
A \emph{broadcast} on a graph $G=(V,E)$ is a 
function $f:V \rightarrow \{0,1, \ldots, \diam{G}\}$ satisfying $f(v) \leq e(v)$ for all $v \in V$,
where $e(v)$ denotes the eccentricity of $v$ and $\diam{G}$ 
denotes the diameter of $G$.
We say that a broadcast dominates $G$ if every vertex can hear at least one broadcasting node.
The upper domination number is the maximum cost of all 
possible minimal broadcasts, 
where the cost of a broadcast is defined as $\cost{f}= \sum_{v \in V}f(v)$.
In this paper we establish both the 
upper domination number and the upper broadcast domination 
number on toroidal grids.  
In 
addition, we classify all diametrical trees, that is, trees whose upper domination number is equal to its diameter.
\end{abstract}

\maketitle

\section{Introduction}

The study of domination theory began in 1958 
with Berge's book~\cite{berge} which introduced the 
\emph{coefficient of external 
stability}, later renamed the \emph{domination number}.  
More than 80 domination related parameters 
have been defined and studied on graphs since then. 
In 1968, Liu discussed the concept of dominance in 
communication networks where the nodes represent cities with broadcast stations, and 
two cities (nodes) are connected by an edge
if they can hear each other's broadcasts. In this instance, a dominating set is a collection of cities whose 
broadcasts reach every city in the network \cite[
Example 9.1]{Liu}.  
In his 2004 PhD thesis, Erwin generalized this concept of domination to model the situation where the cities may build 
broadcast stations that can broadcast across multiple edges, but where the cost of building a stronger
broadcast station is proportional to the strength of the broadcast \cite{erwin}. 
In this model, a broadcast on a graph $G=(V,E)$ is a function $f~\colon~V \rightarrow \{0,1, \ldots, \diam{G}\}$ satisfying $f(v) \leq e(v)$ for all $v \in V$,
where $e(v)$ denotes the eccentricity of $v$ and $\diam{G}$ 
denotes the diameter of $G$.  The \emph{cost} of a broadcast $f$ is the sum $\cost{f}= \sum_{v \in V}f(v)$, 
and the lowest cost of any broadcast on a graph $G$
is called the broadcast domination number $\gamma_b(G)$:
\[ \gamma_b(G) = \min \{\cost{f} : f \text{ is a dominating broadcast of } G \} .\]
In 2005, Dunbar, Erwin, Haynes, Hedetniemi, and Hedetniemi noted the similarity with Liu's model and extended the study of dominating broadcasts in graphs \cite{DEHHH05}.
Extensive research has resulted in the area of broadcast domination and its variants 
\cite{BouZem14,bresar,brewster,heggernes,herke.rt,koh,soh,Mynhardt}.

Erwin also defined the upper broadcast domination number $\Gamma_b(G)$. This is the maximum cost 
of any minimal broadcast:
\[ \Gamma_b(G) = \max \{\cost{f} : f \text{ is a minimal dominating broadcast of } G \} .\]
The benefit of finding a $\Gamma_b$ dominating set is it ensures most of 
the graph can hear more than one broadcast tower, and 
yet the expense of each
tower is justified because the broadcast is minimal, i.e., there is at least one vertex per broadcasting tower that hears 
only that
tower \cite[Theorem 3]{DEHHH05}.   

Restricting the strength of all broadcasts to $f: V \rightarrow \{0,1\}$, we recover the (regular) domination number $\gamma$ and
upper domination number $\Gamma$.
Domination of grids $P_n \Box P_m$ and toroidal grids $C_m\Box C_n$
has been the focus of a considerable amount of literature over the past 30 years
\cite{AlaCreIsoOstPet11,Chang92,CocHarHedWim85,EZKN07,GonPinRaoTho11,GraMol97,JacKin86,KS95,Sha12}.
More recently, the broadcast domination of products of paths and cycles 
has also received a great deal of attention \cite{blessing,BouZem14, bresar,DEHHH05,koh,soh}.
In particular in 2014, Bre\v{s}ar and \v{S}pacapan studied broadcast domination in graph products. Their work 
showed that $\gamma_b(C_m\Box C_n)=\rad{C_m\Box C_n}-1$ if and only if $m$ and $n$ are both even and $\gamma_b(C_m\Box 
C_n)=\rad{C_m\Box C_n}$ otherwise \cite[Theorem 4.6]{bresar}.
In a related paper, Koh and Soh proved that $\gamma_b(C_m\Box C_n)=\lceil \frac{m+n}{2}\rceil-1$ as their main result
\cite[Theorem 1.3]{soh}.  
Following in this tradition, in Section \ref{tori} we consider 
the invariants $\Gamma(C_m\Box C_n)$ and $\Gamma_b(C_m\Box C_n)$.  
Our first main result is Theorem~\ref{theorem:GammaTG}.
\begin{intthm}
The upper domination number of $C_m\Box C_n$ is \[ \Gamma( C_m\Box C_n) = \begin{cases}  \frac{m 
\cdot 
n}{2} & \text{ if } m,n \text{ even } \\ 
\frac{m \cdot(n - 1)}{2} & \text{ if } m \text{ even}, n \text{ odd } \\
\frac{(m-1) \cdot n }{2} & \text{ if } m \text{ odd},  n \text{ even } \\
\frac{(m-1) \cdot (n-1) }{2} + 1 & \text{ if } m,n \text{ odd. } \\

\end{cases} \]  

\end{intthm}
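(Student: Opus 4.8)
The plan is to prove the formula by establishing matching lower and upper bounds in each of the four parity cases, working throughout with the standard characterization that a dominating set $S$ is \emph{minimal} exactly when every $v \in S$ has a \emph{private neighbor}: either $v$ has no neighbor in $S$ (so $v$ is its own private neighbor), or some $u \in N(v)$ satisfies $N[u] \cap S = \{v\}$. Since $C_m\Box C_n$ is $4$-regular on $mn$ vertices and decomposes both into $m$ disjoint rows, each a copy of $C_n$, and into $n$ disjoint columns, each a copy of $C_m$, this gives a uniform setting for both directions. First I would fix notation $(i,j)$ with $i\in\mathbb{Z}_m$, $j\in\mathbb{Z}_n$, then treat the lower bound (an explicit minimal dominating set of the claimed size) and the upper bound (no minimal dominating set is larger) separately, the latter being the substantive part.

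For the lower bound I would recall that every maximal independent set is a minimal dominating set, since a maximal independent set dominates and each of its vertices is its own private neighbor. When $m,n$ are both even the bipartition class $\{(i,j): i+j\equiv 0 \pmod 2\}$ is independent and maximal with $\tfrac{mn}{2}$ vertices; when exactly one of $m,n$ is odd I would use a sheared striped pattern placing $\lfloor n/2\rfloor$ (resp.\ $\lfloor m/2\rfloor$) vertices per row so as to stay independent around the odd cycle, realizing $\tfrac{m(n-1)}{2}$ or $\tfrac{(m-1)n}{2}$. For the both-odd case I would instead build a tailored minimal dominating set directly: take a near-striped configuration on an even sub-board of size $(m-1)\times(n-1)$ contributing $\tfrac{(m-1)(n-1)}{2}$ vertices, and adjoin a single carefully placed vertex so that the set remains dominating and every vertex retains a private neighbor, producing the extra $+1$.

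For the upper bound I would argue that any minimal dominating set $S$ satisfies $|S|\le$ the stated value by a counting argument tied to the cyclic structure. Writing $S = A \sqcup B$, where $A$ consists of the vertices of $S$ with no neighbor in $S$ (so $A$ is independent) and $B$ consists of the vertices carrying a \emph{distinct} external private neighbor $p(v)\in V\setminus S$, I would combine the injection $v\mapsto p(v)$ on $B$ with the row/column decomposition. The governing inequalities are that each row $C_n$ (and each column $C_m$) holds at most $\lfloor n/2\rfloor$ (resp.\ $\lfloor m/2\rfloor$) pairwise nonadjacent vertices, and that every external private neighbor consumes a distinct vertex of $V\setminus S$; balancing these around the torus yields the bound, with the parity of $m$ and $n$ deciding whether each cycle splits evenly. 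The floor operations forced by an odd-length cycle are precisely what separate the four cases and generate the additive correction when both $m$ and $n$ are odd.

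The hard part will be the upper bound, and specifically making the per-row/per-column count interact correctly with \emph{minimality} rather than mere independence. A minimal dominating set need not be independent, so I must charge the vertices of $B$ that are adjacent to other members of $S$ against their distinct private neighbors without double counting, and then show the extremal configurations cannot exceed the stated value. This is most delicate in the both-odd case, where in each direction a single unavoidable odd cycle simultaneously lowers the main term and is responsible for the $+1$; verifying that these two effects combine to give exactly $\tfrac{(m-1)(n-1)}{2}+1$, and that no minimal dominating set beats it, is where the careful toroidal bookkeeping will be required.
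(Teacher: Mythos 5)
Your lower-bound strategy (maximal independent sets are minimal dominating sets, so exhibit one of the right size in each parity class) is sound and is essentially what the paper does with its explicit diagonal constructions. The difficulty is the upper bound, where you depart from the paper, and there are two distinct problems. First, the two inequalities you actually write down do not close: independence of $A$ gives $|A|\le m\lfloor n/2\rfloor$, and the injection $v\mapsto p(v)$ gives $|A|+2|B|\le mn$, but these are jointly satisfied by numerical profiles such as $|A|=mn/2$, $|B|=mn/4$, for which $|A|+|B|=3mn/4$ exceeds the target $mn/2$. So ``balancing these around the torus'' is not a computation you have specified --- it is the entire missing argument, and it would have to use structural facts (e.g.\ that a large independent $A$ leaves no room for external private neighbors of $B$) that your sketch never invokes. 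The paper takes a different route here, partitioning the torus into $2\times 2$ blocks and invoking Lemma~\ref{lemma:2by2} to cap each block at two vertices of a minimal dominating set; whatever one thinks of the rigor of that step, it is at least a concrete mechanism, and your outline does not yet contain one.

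Second, and more seriously, the both-odd upper bound you are trying to prove is unattainable, and your own lower-bound machinery shows this. In $C_5\Box C_5$ the set $S=\{(i,i),\,(i,i+2) : i\in\mathbb{Z}_5\}$ (indices mod $5$) has $10$ vertices, two per row and two per column with each pair at cyclic distance $2$; it is independent, it dominates every row from within that row, and hence it is a maximal independent set and therefore a minimal dominating set. Thus $\Gamma(C_5\Box C_5)\ge 10$, while the formula's target is $\frac{(5-1)(5-1)}{2}+1=9$. This is consistent with the standard chain $\alpha(G)\le\Gamma(G)$ but not with the claimed odd-odd value, so no amount of careful toroidal bookkeeping will establish the bound $\le\frac{(m-1)(n-1)}{2}+1$ in general: the sheared diagonal patterns you propose for the mixed-parity cases, applied in both directions at once, already beat the quantity you plan to prove is maximal. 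You should resolve this discrepancy before investing further in the upper-bound argument.
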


The second main result is proved in Theorem~\ref{theorem:TGIF}.
\begin{intthm} 
 For any $3 \leq m \leq n$, the upper broadcast domination number of $C_m\Box C_n$ is
 \[ \Gamma_b ( C_m\Box C_n) = m \cdot \Gamma_b(C_n) \]
 where $\Gamma_b(C_n)$ is equal to $n-2$ is $n$ is even and $n-3$ if $n$ is odd.
\end{intthm}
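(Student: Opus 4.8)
The plan is to establish the two bounds $\Gamma_b(C_m\Box C_n)\ge m\cdot\Gamma_b(C_n)$ and $\Gamma_b(C_m\Box C_n)\le m\cdot\Gamma_b(C_n)$ separately. Throughout I view $C_m\Box C_n$ as $m$ vertex-disjoint ``rows,'' each an isometric copy of $C_n$, and I use the product metric $d((i,j),(i',j'))=d_{C_m}(i,i')+d_{C_n}(j,j')$ together with the standard characterization of minimal dominating broadcasts \cite{DEHHH05}: a dominating broadcast $f$ is minimal if and only if every vertex $v$ with $f(v)>0$ has a \emph{private boundary vertex}, that is, a vertex $u$ with $d(u,v)=f(v)$ that no other broadcasting vertex dominates. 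Since each private boundary vertex is dominated by a unique broadcaster, these vertices are distinct, and I may rewrite the cost as $\cost{f}=\sum_{v}d(v,p_v)$, the sum running over the broadcasting vertices $v$ with their private boundary vertices $p_v$.

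For the lower bound, I fix a minimal dominating broadcast $g$ of $C_n$ with $\cost{g}=\Gamma_b(C_n)$ and replicate it across the rows by setting $f(i,j)=g(j)$ for all $i$. Each vertex is already dominated inside its own row, so $f$ dominates $C_m\Box C_n$, and $f(i,j)=g(j)\le\diam{C_n}$ respects the eccentricity constraint. The crucial point is that minimality is preserved: if $(i,j)$ is a broadcaster whose row-private boundary vertex is $(i,p)$ with $d_{C_n}(j,p)=g(j)$, then for any other broadcaster $(i',c)$ one has $d((i',c),(i,p))=d_{C_m}(i,i')+d_{C_n}(c,p)$, and within-row privacy gives $d_{C_n}(c,p)>g(c)$ for every $c\ne j$. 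Adding the nonnegative term $d_{C_m}(i,i')$, which is strictly positive when $i'\ne i$, shows that $(i,p)$ is dominated only by $(i,j)$ and sits at distance exactly $f(i,j)$, so it remains a private boundary vertex. Hence $f$ is a minimal dominating broadcast and $\cost{f}=m\cdot\Gamma_b(C_n)$.

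The upper bound is the crux and the step I expect to fight. Given an arbitrary minimal dominating broadcast $f$, I want to show $\cost{f}=\sum_v d(v,p_v)\le m\cdot\Gamma_b(C_n)$. My plan is a coverage-counting argument that reduces the two-dimensional problem to the one-dimensional bound $\Gamma_b(C_n)$ row by row. Writing $c(u)$ for the number of broadcasters dominating $u$, domination gives $\sum_u c(u)=\sum_v|B(v,f(v))|\ge mn$, while the private boundary vertices are forced to have $c(u)=1$. The obstacle is that a ball $B(v,f(v))$ in the torus is a two-dimensional diamond whose size grows quadratically in $f(v)$ and which may wrap across several rows, so the interval bookkeeping that yields $\Gamma_b(C_n)=n-2$ (even) and $n-3$ (odd) on a single cycle does not transfer directly.

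To get around this I would first prove a \emph{thinness reduction}: a cost-maximal minimal broadcast may be assumed to have every $f(v)$ small enough that no ball wraps in the $C_m$-direction. The mechanism is that a broadcaster spreading vertically dominates a quadratic region and thereby strips its neighbors of private boundary vertices, which caps how many minimal broadcasters can coexist and can only lower the total cost. Once all balls are thin, each row carries a configuration whose restriction is forced to behave like a minimal dominating broadcast of $C_n$, and summing the cycle bound over the $m$ rows yields $\cost{f}\le m\cdot\Gamma_b(C_n)$; combined with the lower bound this proves the theorem. The most delicate points, which I would write in full, are the exchange argument behind the thinness reduction and the verification that the per-row projection of a thin broadcast is genuinely controlled by $\Gamma_b(C_n)$ rather than inflated by cross-row domination — this is exactly where the parities of $m$ and $n$ enter and where the argument must be handled with the greatest care.
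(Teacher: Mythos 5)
Your lower bound is correct and is in fact a different, more careful construction than the paper's: rather than loading the first two columns with the value $\Gamma_b(C_n)$, you replicate an explicit $\Gamma_b$-broadcast $g$ of $C_n$ across the $m$ rows and verify minimality through the product-metric identity $d((i',c),(i,p))=d_{C_m}(i,i')+d_{C_n}(c,p)$, which cleanly shows that each row-private neighbor remains private in the torus; the eccentricity constraint is also checked, and the cost is visibly $m\cdot\Gamma_b(C_n)$. One caution: the characterization of minimality you quote from Dunbar et al.\ omits the case of a broadcaster $v$ with $f(v)=1$ that is its own private neighbor, so the identity $\cost{f}=\sum_v d(v,p_v)$ is not valid for an arbitrary minimal broadcast; for the lower bound you should take $g$ to be one of the explicit broadcasts constructed in Theorem~\ref{thm:CycleGammab}, all of whose broadcasters have private neighbors at distance exactly $g(v)$, so that your privacy computation applies.

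The upper bound, which is the entire content of the theorem, is a genuine gap: what you have written is a plan, not a proof. The ``thinness reduction'' is asserted via an unspecified exchange argument, and even granting it, the key claim that ``each row carries a configuration whose restriction is forced to behave like a minimal dominating broadcast of $C_n$'' is precisely the point at issue and is never established. A broadcaster of strength $k$ dominates vertices in up to $2k+1$ rows, so there is no canonical restriction of $f$ to a single row that is a dominating broadcast of $C_n$, let alone a minimal one; the row singled out by any averaging or pigeonhole step need not be dominated from within itself, so Theorem~\ref{thm:CycleGammab} cannot be invoked for it directly, and no mechanism is given for attributing the cost of a tall diamond to individual rows. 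You correctly identify these as the delicate points but then defer them, so the inequality $\cost{f}\le m\cdot\Gamma_b(C_n)$ is never actually derived. For comparison, the paper's own upper-bound argument is essentially the pigeonhole step you describe (a row whose broadcasters have total strength exceeding $\Gamma_b(C_n)$ yields ``a subgraph of a cycle $C_n$ with minimal broadcast more than $\Gamma_b(C_n)$''), and it leaves the same passage unjustified; you have located the real difficulty, but you have not resolved it.
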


Motivated by an open question posed by Dunbar et al., Herke and Mynhardt studied graphs $G$ with 
$\gamma_b(G) = \rad{G}$ and called these graphs \emph{radial graphs}.  
They classified all radial trees as trees 
whose diametrical path can be split into two even length pieces by removing a path consisting 
of vertices of degree 2 \cite[Theorem 1]{herke.rt}.  
In light of this work, it seems natural to consider \emph{diametrical graphs}, i.e., graphs $G$
whose upper broadcast domination number equals the 
diameter, $\Gamma_b(G) = \diam{G}$.  
Interestingly enough, while this manuscript was in preparation, 
a preprint by Mynhardt and Roux appeared on the arXiv that posed the classification 
of all diametrical trees as an open problem \cite[Problem 7]{Mynhardt}. 
In Section \ref{diametrical} we
classify all diametrical trees by proving that diametrical trees are a subfamily of 
lobster graphs.  The main theorem of Section \ref{diametrical} is the following result:

\begin{intthm} 
 A tree $T$ is diametrical if and only if it is a lobster graph containing only limbs of types $A$, $B$, and $C$ depicted in 
Figure \ref{fig:3limbs1} such that the number of limbs is less than half the diameter of the graph and the distance between each pair of 
adjacent limbs or an endpoint $e_i$ satisfies the following inequalities.
\begin{center}
\begin{tabular}{| c|c| c| } \hline 
$d(A,A) \geq 4$ & $d(A,B) \geq 3$ & 
$d(A,C) \geq 3$ \\ $d(B,B) \geq 3$ &
$d(B,C) \geq 2$ & 
$d(C,C) \geq 2$  \\ 
$d(e_i,A) \geq 2$ & $d(e_i,B) \geq 2$ & $d(e_i,C) \geq 1$  \\ \hline
\end{tabular}
\end{center}

\end{intthm}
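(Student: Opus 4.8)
The plan is to use the universal lower bound $\Gamma_b(T) \ge \diam{T}$, which follows by broadcasting with strength $\diam{T}$ from one endpoint $u$ of a diametral path: since the eccentricity of $u$ equals $\diam{T}$, this single tower dominates $T$, and it is minimal because the far endpoint lies at distance exactly $\diam{T}$ and is heard by no one else. Consequently $T$ is diametrical if and only if \emph{every} minimal dominating broadcast has cost at most $\diam{T}$, and I would work entirely with this reformulation. The engine for both directions is the standard characterization of minimality on trees: a dominating broadcast $f$ with towers $v_1,\dots,v_k$ of strengths $s_1,\dots,s_k$ is minimal precisely when each $v_i$ owns a \emph{private boundary vertex} $p_i$ at distance exactly $s_i$ heard by no other tower. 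Thus $\cost{f}=\sum_i s_i=\sum_i d(v_i,p_i)$, and the whole problem reduces to understanding how far apart towers and their private vertices can be pushed inside a lobster.

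For the sufficiency direction ($\Leftarrow$) I would fix a lobster $T$ whose limbs are all of type $A$, $B$, or $C$ and which satisfies the separation and count hypotheses, take an arbitrary minimal dominating broadcast $f$, and bound $\cost{f}$ by $\diam{T}$ through a charging argument. Let $P$ be a diametral path running from a deepest leaf on one end of the spine to a deepest leaf on the other, so $\diam{T}=|P|$. I would project each tower--private-vertex path $v_i p_i$ onto $P$ and charge $s_i=d(v_i,p_i)$ to the edges it covers, showing that the nine inequalities in the table are exactly the thresholds guaranteeing that distinct towers charge essentially disjoint portions of $P$. Spine towers behave as on a bare path, where minimality already forces the charged intervals to pack without exceeding $|P|$; limb towers are the only possible source of over-charging, and here each limb contributes at most a bounded surplus, controlled by the bounded depth of types $A$, $B$, $C$ together with the hypothesis that the number of limbs is less than $\tfrac12\diam{T}$, which caps the accumulated surplus so that $\sum_i s_i\le\diam{T}$.

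For the necessity direction ($\Rightarrow$) I would argue by contrapositive: if $T$ fails to be a lobster, contains a limb outside types $A$, $B$, $C$, violates one of the nine distance inequalities, or has at least $\tfrac12\diam{T}$ limbs, then I construct a minimal dominating broadcast of cost strictly greater than $\diam{T}$. Each violation supplies a local gadget. A vertex at distance $\ge 3$ from the spine, or a limb of a forbidden shape, creates room for a tower with a private vertex deep in the branch whose strength, added to a complementary end-tower, overshoots $|P|$; two limbs closer than the tabulated bound, or a limb too close to an endpoint $e_i$, let me seat two towers that each retain a private boundary vertex while their strengths sum past the diameter; and too many limbs let me seat one small tower per limb, each with its own private leaf, so that the tower count alone pushes the total cost over $\diam{T}$. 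In every case the crux is verifying \emph{simultaneously} that the constructed broadcast dominates $T$ and that every tower keeps a private boundary vertex, i.e.\ that it is genuinely minimal; the tabulated distances are precisely the smallest separations at which both can hold.

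The step I expect to be the main obstacle is the charging bookkeeping in the sufficiency direction, specifically controlling the interaction of several mutually close limbs: because the pairwise bounds $d(B,C)\ge 2$, $d(C,C)\ge 2$, and $d(B,B)\ge 3$ are tight, the projected intervals of neighboring limb towers can abut or barely overlap, and one must show that the requirement of a distinct private boundary vertex per tower forbids these overlaps from accumulating into a net surplus. Establishing that $A$, $B$, $C$ are an \emph{exhaustive} list---that no other bounded-depth limb is compatible with $\Gamma_b(T)=\diam{T}$---will likewise require a careful, essentially case-by-case enumeration of the small subtrees that can hang off a spine vertex without admitting an over-budget minimal broadcast.
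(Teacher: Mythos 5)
Your necessity direction is sound and matches what the paper actually does: for each violation (a limb of depth greater than $2$, a forbidden limb shape, a spacing violation, a limb too close to an endpoint, or too many limbs) one exhibits an explicit minimal dominating broadcast of cost exceeding $\diam{T}$, using the private-neighbor criterion to certify minimality. Your gadgets are the same in spirit as the paper's Lemmas on lobsters, illegal limbs, endpoint distances, and illegal spacing, and your ``one strength-$1$ tower per limb'' device for the limb-count condition is exactly the mechanism behind the paper's breadth-first-labeling lemma.

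The sufficiency direction, however, has a genuine gap, and it sits exactly where you predicted. First, your stated minimality criterion is incomplete: by the characterization you are invoking, a broadcasting vertex $v$ either has a private neighbor at distance exactly $f(v)$, \emph{or} $f(v)=1$ and $v$ hears only itself; in the second case $d(v,v_p)=0\neq f(v)$, so the identity $\cost{f}=\sum_i d(v_i,p_i)$ on which your charging scheme rests fails for strength-one towers that are their own private vertices (the standard fix is to charge such a tower to an arbitrary incident edge). Second, and more seriously, disjointness of the projected intervals on the diametral path $P$ only bounds the sum of the \emph{projected} lengths by $|P|$; a tower or private vertex sitting on a limb makes $s_i$ exceed its projection by up to the limb depth at each end, so your argument yields only $\sum_i s_i\le \diam{T}+2k$ where $k$ is the number of limbs, and the hypothesis $k<\tfrac12\diam{T}$ gives $\sum_i s_i<2\diam{T}$, not $\sum_i s_i\le\diam{T}$. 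What is actually needed is that the spacing inequalities force each limb's surplus to be absorbed by spine edges that no other tower charges, i.e.\ the per-limb surplus is zero once the separations hold, and this is precisely the bookkeeping you defer. The paper avoids the projection issue entirely by a different route: it first shows that concatenating diametrical trees along their diametral paths yields a diametrical tree, then reduces sufficiency to a finite local check of the most costly minimal broadcasts on a single legal limb and on each legal pair of adjacent limbs at minimal spacing. Your plan could likely be completed, but as written the central inequality does not follow from the steps you give.
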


\begin{figure}[h]
\begin{picture}(350,50)(0,-10)
\multiput(40,0)(20,0){1}{\circle*{3}}

\multiput(15,0)(4,0){3}{\circle*{1}}
\put(25,0){\textcolor{black}{\line(1,0){30}}}
\multiput(57,0)(4,0){3}{\circle*{1}}
\linethickness{.1pt}
\put(40,0){\line(0,1){40}}
\put(40,20){\circle*{3}}
\put(40,40){\circle*{3}}

\put(32,-15){(A)}
\multiput(190,0)(20,0){1}{\circle*{3}}
\put(175,0){\textcolor{black}{\line(1,0){30}}}
\multiput(165,0)(4,0){3}{\circle*{1}}
\multiput(210,0)(4,0){3}{\circle*{1}}
\put(190,0){\textcolor{black}{\line(1,3){7}}}
\put(190,0){\textcolor{black}{\line(-1,3){7}}}
\put(183,21){\circle*{3}}
\put(197,21){\circle*{3}}
\put(182,-15){(B)}

\multiput(340,0)(20,0){1}{\circle*{3}}
\put(325,0){\textcolor{black}{\line(1,0){30}}}

\multiput(315,0)(4,0){3}{\circle*{1}}
\multiput(360,0)(4,0){3}{\circle*{1}}

\put(340,0){\line(0,1){20}}
\put(340,20){\circle*{3}}
\put(332,-15){(C)}
\end{picture}
\caption{3 types of legal limbs} \label{fig:3limbs1}
\end{figure}
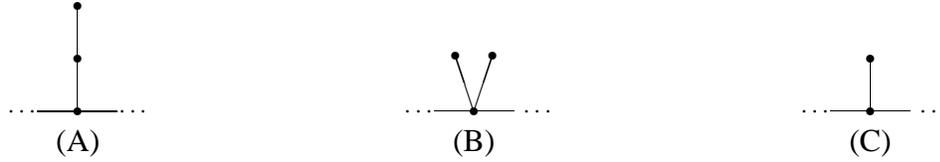

The rest of this paper is structured as follows:
In Section~\ref{background} we provide necessary definitions and background to state our main results
precisely.  Section~\ref{tori} contains results with 
regard to $C_m\Box C_n$ and Section~\ref{diametrical} contains results concerning diametrical 
graphs including a classification of diametrical trees.  
We conclude our paper with Section~\ref{open} which states a few open problems.

\section{Background on Broadcasts in Graphs}\label{background}

Let 
$G=(V,E)$ be a graph with vertex set $V$ and edge set $E$.  
For any $v \in V$ we call the set $N(v)=\{u\in V|\;uv\in E\}$ the \emph{open neighborhood of $v$}. Likewise, the \emph{closed 
neighborhood of $v$} is the set $N[v]=N(v)\cup \{v\}$. We say that a vertex $u$ is a \emph{neighbor} of $v$ if $u
\in N[v]$.
 A \emph{dominating set} is a collection $S \subseteq V$ of vertices in $V$ 
 such that every vertex $v \in V$ is either in $S$ or 
it has a 
neighbor in $S$. 
The cardinality of the smallest possible dominating set, denoted 
$\gamma(G)$, is called the \emph{domination number} of $G$.  
A set $S$ is called a \emph{$\gamma$-set} if $S$ is a dominating set of $G$
with $|S| = \gamma(G)$. We say that a dominating set $S$ 
is \emph{minimal} if removing any vertex from $S$ results in a set
that no longer dominates $G$. The cardinality of the largest possible minimal dominating set is 
called the \emph{upper 
domination number} of $G$, denoted 
$\Gamma(G)$. A set $S$ is called a \emph{$\Gamma$-set} if $S$ is a 
minimal dominating set of $G$
with $|S| = \Gamma(G)$.

\begin{example} Let $G$ be the graph depicted in Figure \ref{fig:graph1}.  Then the open and closed 
neighborhoods of $v_3$ are $N(v_3)=\{v_2,v_4,v_5\}$ and $N[v_3]=\{v_2,v_3,v_4,v_5\}$ respectively. The dominating set 
$S_1=\{v_1,v_3\}$ is a $\gamma$-set and the set $S_2 =\{v_2,v_4,v_5\}$ is a $\Gamma$-set for $G$. Thus we see $\gamma(G)=2$ and 
$\Gamma(G)=3.$

\begin{figure}[h]
\begin{picture}(180,50)
\multiput(10,5)(40,0){4}{\circle*{3}}
\put(10,5){\line(1,0){120}}

\put(5,-5){$v_1$}
\put(45,-5){$v_2$}
\put(85,-5){$v_3$}
\put(90,5){\line(0,1){40}}
\put(90,45){\circle*{3}}
\put(83,52){$v_5$}
\put(125,-5){$v_4$}
\put(10,30){\tcr{$S_1$}}
\put(10,5){\tcr{\circle{5}}}
\put(90,5){\tcr{\circle{5}}}
\put(120,30){\tcb{$S_2$}}
\put(50,5){\tcb{\circle{7}}}
\put(130,5){\tcb{\circle{7}}}
\put(90,45){\tcb{\circle{7}}}
\put(50,5){\tcb{\circle{5}}}
\put(130,5){\tcb{\circle{5}}}
\put(90,45){\tcb{\circle{5}}}
\end{picture}
\caption{Two minimal dominating sets in a graph $G$}\label{fig:graph1}
\end{figure}
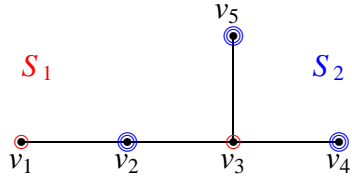
\end{example}

The \emph{distance} between two vertices $v$ and $w$ is the minimum number of edges between $v$ and $w$, denoted $d(v,w)$. 
The \emph{eccentricity} of the vertex $v$ in $G$ is the maximum distance from $v$ 
to any other vertex $u$ in $V$.  We denote this set as 
\[e(v)=\max \{d(v,w):w \in V \}. \]
The \emph{radius of $G$} is the minimum eccentricity among the vertices of $G$ and 
the \emph{diameter of $G$} is the maximum eccentricity among the vertices of $G$.  
We denote them respectively as
\[\rad{G}=\min\{e(v):v \in V\} \hspace{7mm}\text{and}\hspace{7mm} \diam{G}=\max\{e(v):v \in V\}. \]

A \emph{broadcast} on a graph $G$ is a function $f \colon V \rightarrow  \{0,...,\diam{G}\}$ 
such that for every vertex $v \in V(G)$, $f(v) \leq e(v)$.
We let $V_f^+$ denote the set of 
\emph{broadcasting vertices} for $f$. If the broadcast is well understood, 
we simplify the 
notation to $V^+$.
The set of vertices that a
vertex $v \in V$ can \emph{hear} is defined as $H (v) = \left \{u \in V_f^+ ~|~ d(u, v) \leq  f(u) \right \}$, and 
the \emph{broadcast neighborhood} of a broadcasting vertex $v \in V^+_f$ is defined as \[ 
N_f[v] = \left \{ u \in V \colon d(u,v) \leq f(v) \right \} .\]  
We say that a vertex $u$ is a \emph{private $f$-neighbor}, or simply \emph{private neighbor}, of a vertex $v$ if it is in the set 
$\{u\in V\; |\;H(u)=\{v\}\}$.

The \emph{cost} of a broadcast $f$ is the value \[ \cost{f} = \sum_{v \in V_f^+} f(v). \]
 We say that a broadcast \emph{dominates} a graph if every vertex in the graph hears at least one broadcasting vertex.
 That is, for every $v \in V$ there is a $u \in V_f^+$ such that $d(u,v) \leq f(u)$.
 A dominating broadcast $f$ is called \emph{minimal} if decreasing the strength of any broadcasting vertex $v \in V_f^+$ results 
in a non-dominating broadcast. Given a graph $G$, its \emph{broadcast 
domination number}, denoted $\gamma_b(G)$, is defined to be the smallest cost of all minimal dominating broadcasts, that is
\[ \gamma_b(G) = \min \{ \cost{f}\colon f \text{ is a minimal dominating broadcast on } G \},\] and
its \emph{upper broadcast domination number}, denoted $\Gamma_b(G)$, is defined to be the maximum cost of all minimal broadcasts, 
that is, 
\[ \Gamma_b(G)= \max \{ \cost{f}\colon f \text{ is a minimal dominating broadcast on } G \} .\]
A broadcast $f$ is said to be \emph{efficient} if every vertex $v \in V$ hears only one broadcasting vertex.

\begin{example}
Figure \ref{fig:graph2} depicts three minimal dominating sets $f,g,$ and $h$.
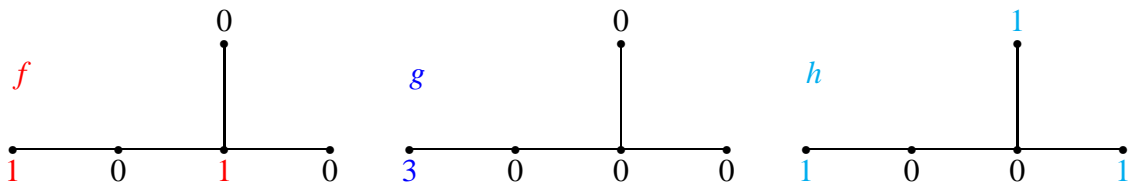
\begin{figure}[h]
\begin{picture}(400,50)
\multiput(10,5)(40,0){4}{\circle*{3}}
\put(10,5){\line(1,0){120}}
\put(7,-7){$\tcr{1}$}
\put(10,30){$\tcr{f}$}
\put(47,-7){$0$}
\put(87,-7){$\tcr{1}$}
\put(90,5){\line(0,1){40}}
\put(90,45){\circle*{3}}
\put(87,50){$0$}
\put(127,-7){$0$} 

\multiput(160,5)(40,0){4}{\circle*{3}}
\put(160,5){\line(1,0){120}}
\put(160,30){$\tcb{g}$}
\put(157,-7){$\tcb{3}$}
\put(197,-7){$0$}
\put(237,-7){$0$}
\put(240,5){\line(0,1){40}}
\put(240,45){\circle*{3}}
\put(237,50){$0$}
\put(277,-7){$0$} 

\multiput(310,5)(40,0){4}{\circle*{3}}
\put(310,5){\line(1,0){120}}
\put(310,30){$\tcc{h}$}
\put(307,-7){$\tcc{1}$}
\put(347,-7){$0$}
\put(387,-7){$0$}
\put(390,5){\line(0,1){40}}
\put(390,45){\circle*{3}}
\put(387,50){$\tcc{1}$}
\put(427,-7){$\tcc{1}$} 

\end{picture}
\caption{Three Minimal Dominating Broadcast of $G$}\label{fig:graph2}
\end{figure}
Labeling the vertices of $G$ as in Figure \ref{fig:graph1}, the three broadcasts are defined as
\begin{itemize}
\item $f(v_1) = f(v_3)=1$ and $f(v_2)=f(v_4)=f(v_5)=0$
\item $g(v_1) =3$ and $g(v_2)=g(v_3)=g(v_4)=g(v_5)=0$
\item $h(v_1) =h(v_4)=h(v_5)=1$ and $h(v_2)=h(v_3)=0$.
\end{itemize} 
We see that $f$ is a $\gamma_b$-broadcast, while $g$ and $h$ are both $\Gamma_b$ broadcasts. Of these three broadcasts, only $g$ 
is efficient, as $v_2 \in N_f[v_1] \cap N_f[v_3]$ and $v_3 \in N_h[v_4] \cap N_h[v_5]$.
\end{example}

The \emph{Cartesian product} of the graphs $G_1$ and $G_2$ is denoted by $G_1\Box G_2$
with vertex set \[ V(G_1\Box 
G_2)=V(G_1)\times V(G_2)=\{(x_1,x_2)~|\; x_i\in V(G_i) \text{ for } i=1,2\}. \]  Any two vertices $(u_1,u_2)$ and $(v_1,v_2)$ are 
adjacent in $G_1\Box G_2$ if and only if 
either 
\begin{itemize}
\item $u_1=v_1$ and $u_2v_2\in E(G_2)$; or 
\item $u_2=v_2$ and $u_1v_1\in E(G_1)$.
\end{itemize}

\section{Broadcasts in Toroidal Grids $C_m \Box C_n$}\label{tori}

Henceforth in this paper, we set $1 \leq m \leq n$ and label the vertices of any $\grid{m}{n}$ or $\TG{m}{n}$ by the convention:
\[ v_{1,1}, v_{1,2}, \ldots, v_{1,n}, 
v_{2,1}, v_{2,2}, \ldots, v_{m,1},v_{m,2}, \ldots 
,v_{m,n}\] 
where the first subscript denotes the row the vertex is in, the second subscript 
denotes the column the vertex is in, and $v_{1,1}$ is in the upper left corner while $v_{m,n}$ is in the lower right.

In this section we find the exact values for $\Gamma(\TG{m}{n})$ and $\Gamma_b(\TG{m}{n})$ of $m \times n$ toroidal grids and 
compare these results with those already existing in the literature on (broadcast) domination theory of toroidal grids. 
We start by stating an obvious relationship between the domination theory of grids and toroidal grids.

\begin{lemma} \label{lemma:gd}
Let $\Delta$ be any domination number.  
Then \[ \Delta (\TG{m}{n}) \leq \Delta (\grid{m}{n}). \]
\end{lemma}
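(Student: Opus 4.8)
The plan is to isolate the one structural fact that makes this statement feel ``obvious'': the grid $\grid{m}{n}=P_m\Box P_n$ is a \emph{spanning subgraph} of the torus $\TG{m}{n}$, the latter being obtained by adjoining the wrap-around edges $v_{i,1}v_{i,n}$ and $v_{1,j}v_{m,j}$. In particular the two graphs share the same vertex set, each closed neighborhood in the grid is contained in the corresponding closed neighborhood in the torus, and every distance in the torus is at most the corresponding distance in the grid. The lemma is then the assertion that a domination parameter cannot increase when edges are added, and I would establish it separately for the minimizing parameters ($\gamma,\gamma_b$) and the maximizing parameters ($\Gamma,\Gamma_b$), since the natural transfer of dominating configurations runs in opposite directions in the two cases.

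For $\gamma$ and $\gamma_b$ the transfer goes from the grid to the torus and is routine. A $\gamma$-set $S$ of $\grid{m}{n}$ still dominates $\TG{m}{n}$: since each closed grid-neighborhood is contained in the corresponding torus-neighborhood, any vertex dominated in the grid is dominated in the torus, so $\gamma(\TG{m}{n})\le|S|=\gamma(\grid{m}{n})$. For broadcasts the only wrinkle is that a grid broadcast $f$ may violate the eccentricity constraint on the torus, where eccentricities are smaller; I would repair this by capping, replacing $f(v)$ with $\min\{f(v),e(v)\}$ computed in the torus. Because torus distances only shrink, the capped broadcast is a legal dominating broadcast of the torus of no greater cost, and as $\gamma_b$ is the minimum cost over all dominating broadcasts, this yields $\gamma_b(\TG{m}{n})\le\gamma_b(\grid{m}{n})$.

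The maximizing parameters $\Gamma$ and $\Gamma_b$ are the delicate case, because now I must begin with an optimal minimal dominating configuration on the torus and recover one of at least equal cost on the grid. I would argue through private neighbors. Let $S$ be a $\Gamma$-set of $\TG{m}{n}$. If $S$ happens to dominate $\grid{m}{n}$ as well, then the smaller grid-neighborhoods force every torus private neighbor of a vertex $v\in S$ to remain a private neighbor in the grid, so $S$ is already a minimal dominating set of the grid and $\Gamma(\grid{m}{n})\ge|S|$. The obstacle is the complementary case, in which $S$ fails to dominate the grid; this forces some wrap-around edge to be essential, meaning a vertex $x$ is dominated on the torus only across a deleted edge. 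Here one must perturb $S$ in a neighborhood of $x$---adjoining a vertex to recapture $x$ and then pruning to restore minimality---while proving that the cost never drops below $|S|$, and the friction is that reinstating a private neighbor for the vertex that formerly reached $x$ around the torus can collide with the private neighbors certifying minimality elsewhere. I expect this swap-and-prune step, where the specific geometry of $\TG{m}{n}$ rather than that of an arbitrary host graph must be exploited, to carry the entire difficulty; the statement for $\Gamma_b$ would then follow the same scheme with ``private neighbor'' replaced by ``private $f$-neighbor'' and with broadcast strengths tracked through the perturbation.
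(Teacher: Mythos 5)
Your treatment of the minimizing parameters is exactly the paper's entire proof: the authors observe that a dominating set $S$ of $\grid{m}{n}$ still dominates $\TG{m}{n}$ once the wrap-around edges are added, and conclude $\Delta(\TG{m}{n})\leq\Delta(\grid{m}{n})$. Your additional remark about capping a grid broadcast at the (smaller) torus eccentricities is a real wrinkle the paper glosses over, and your fix is correct: lowering $f(v)$ to $e(v)$ computed in the torus cannot destroy domination, since at strength $e(v)$ the vertex $v$ already reaches everything, and the cost only decreases, which is the right direction for a minimum-type parameter.

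The substantive issue is the one you flag yourself. For $\Gamma$ and $\Gamma_b$ the inequality requires starting from a maximum minimal dominating configuration of the \emph{torus} and producing a minimal dominating configuration of the \emph{grid} of at least the same cost; transferring a grid configuration to the torus bounds the wrong quantity, and the transferred set need not even be minimal there (the paper concedes as much in its parenthetical but draws the conclusion anyway). You correctly diagnose this reversal, but your ``swap-and-prune'' step --- adjoining a vertex to recapture a vertex that was dominated only across a wrap-around edge, then pruning to restore minimality without dropping below $|S|$ --- is only a sketch, and you acknowledge that restoring private neighbors may collide with the certificates of minimality elsewhere. So as written your argument proves the lemma only for $\gamma$ and $\gamma_b$, which is precisely as far as the paper's own one-directional proof goes. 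In mitigation: the paper never actually invokes this lemma for the upper parameters (the values $\Gamma(\TG{m}{n})$ and $\Gamma_b(\TG{m}{n})$ are computed directly in Theorems \ref{theorem:GammaTG} and \ref{theorem:TGIF}), so nothing downstream depends on the missing case; but if ``any domination number'' is read to include $\Gamma$ and $\Gamma_b$, neither your proposal nor the paper's proof is complete, and you deserve credit for being the only one of the two to say so.
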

\begin{proof}
Let $S$ be a $\Delta$-dominating set for $\grid{m}{n}$.   Adding edges to connect the vertices $v_{1,i}$ to $v_{m,i}$ for all $i\in\{1,\dots, n\}$  and 
$v_{i,1}$ 
to $v_{i,n}$  for all $i\in\{1,\dots, m\}$ in $\grid{m}{n}$ yields the toroidal grid $\TG{m}{n}$.  The set $S$ also dominates $\TG{m}{n}$. (Note $S$ may not be 
a 
minimal $\Delta$-dominating set for $\TG{m}{n}$.) Therefore $\Delta(\TG{m}{n}) \leq \Delta( \grid{m}{n})$. 
\end{proof}

The domination number of $\TG{m}{n}$ for $m =3,4,$ and $5$ was first considered by Klavzar and Seifter in 1995 
, where they showed the following results hold for $n\geq 4$ \cite[Theorems 2.3 - 2.5]{KS95}:
\begin{align*}
\gamma(\TG{3}{n}) &= n - \left \lfloor \frac{n}{4} \right \rfloor \\
\gamma(\TG{4}{n}) &= n \\
\gamma(\TG{5}{n}) &= \begin{cases}n  & n=5k\\
n+1 & n\in\{5k+1,5k+2,5k+4\}\end{cases}\\
\gamma(\TG{5}{5k+3})&\leq 5(k+1).
\end{align*}

These results provide equations or lower bounds on the domination number of the toroidal graphs $\TG{m}{n}$ for $m =3,4,$ and 
$5$.  We provide equations for the upper domination number of toroidal graphs in Theorem~\ref{theorem:GammaTG}, but first in the 
following theorem we 
prove a formula for the upper domination number of the toroidal graph $\TG{3}{n}$.

\begin{theorem}
 The upper domination number of $\TG{3}{n}$ is given by 
 \[ \Gamma(C_3 \Box
C_n) = n. \]
\end{theorem}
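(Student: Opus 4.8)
The plan is to prove $\Gamma(C_3\Box C_n)=n$ by establishing the two inequalities separately. For the lower bound I would exhibit an explicit minimal dominating set of size $n$, the natural candidate being one vertex per column in a common row, $S=\{v_{1,j}:1\le j\le n\}$. Since each column induces a triangle (as $C_3$ is complete), $v_{1,j}$ dominates all of column $j$, and together with the row edges $S$ dominates the whole graph. To verify minimality I would produce a private neighbor for each $v_{1,j}$: the vertex $v_{2,j}$ has $v_{1,j}$ as its only neighbor in $S$ (its remaining neighbors $v_{3,j},v_{2,j-1},v_{2,j+1}$ avoid $S$), so deleting $v_{1,j}$ leaves $v_{2,j}$ undominated. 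Hence $\Gamma(C_3\Box C_n)\ge n$.

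For the upper bound let $S$ be an arbitrary minimal dominating set; by the standard characterization of minimality I may fix a distinct private neighbor $\phi(v)\in N[v]$ for each $v\in S$. Writing $s_j=|S\cap\text{column }j|$, the goal becomes $\sum_j s_j\le n$, i.e.\ $\sum_j(s_j-1)\le 0$. The structural heart is the behavior of the ``heavy'' columns with $s_j\ge 2$: the two or three members of $S$ in such a column are mutually adjacent (the column is a triangle), and the third vertex, if present, is dominated twice, so no member of a heavy column can serve as its own or an internal private neighbor. Consequently each such vertex has its private neighbor $v_{i,j\pm1}$ in an adjacent column in the same row, and the identity $N[v_{i,j-1}]\cap S=\{v_{i,j}\}$ forces the whole of column $j-1$ out of $S$. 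Thus every heavy column pushes its occupied neighbors to be empty.

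It then remains to run a discharging argument: assign each heavy column the excess charge $s_j-1\in\{1,2\}$ and move it into the adjacent empty columns occupied by its private neighbors, noting that a column with $s_j=3$ additionally forces the column two steps away to be empty. If each empty column can be shown to receive total charge at most $1$, then $\sum_j(s_j-1)\le 0$ and the proof closes. I expect this bookkeeping to be the main obstacle: an empty column may border heavy columns on both sides, so the argument must combine the distinctness of private neighbors (at most three per empty column) with the ``two steps away'' emptiness constraints to forbid any empty column from being over-charged. Making the charge assignment disjoint---most cleanly by setting up a Hall-type matching from the private neighbors of the heavy columns into the vertices of the empty columns---is the delicate step on which the entire upper bound rests.
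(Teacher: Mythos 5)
Your lower bound is correct and coincides with the paper's (the paper broadcasts from row $2$ rather than row $1$, but the argument is the same: each chosen vertex has a private neighbor in its own column, which is a triangle). The genuine gap is in the upper bound, and it sits exactly where you say it does: you reduce $|S|\le n$ to a Hall-type assignment of the excess $s_j-1$ of each ``heavy'' column to nearby empty columns, but you never verify that this assignment can be made conflict-free, and without that the inequality $\Gamma(C_3\Box C_n)\le n$ is not established. Moreover, one of the auxiliary claims you lean on is false as stated: a column with $s_j=3$ does \emph{not} in general force the column two steps away to be empty --- if the three private neighbors split two--one between the two adjacent columns, then both adjacent columns are empty but only two (respectively one) of the three vertices two steps away are excluded from $S$; the ``two steps away'' conclusion holds only when all three private neighbors lie on the same side, which happens to be the only case where you need it. The facts that actually prevent over-charging --- e.g., that a heavy column sending two private neighbors into column $j+1$ forces $s_{j+2}\le 1$, and that a full column $j$ makes every vertex of column $j+1$ adjacent to a vertex of $S$ in column $j$, so that no vertex of column $j+1$ can serve as a private neighbor for column $j+2$ --- are absent from your sketch, and they are precisely what is needed to rule out two heavy columns competing for the same empty column.

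For comparison, the paper's upper bound is a one-line assertion that a minimal dominating set of $C_3\Box C_n$ can contain at most one vertex per column. Your instinct that matters are more delicate than this is in fact correct: that assertion is false. In $C_3\Box C_6$ the set $S=\{v_{1,1},v_{2,1},v_{3,3},v_{2,4},v_{3,5}\}$ is a minimal dominating set with two vertices in column $1$ (here $v_{1,2}$ is a private neighbor of $v_{1,1}$, $v_{2,6}$ is a private neighbor of $v_{2,1}$, and each of $v_{3,3}$, $v_{2,4}$, $v_{3,5}$ is its own private neighbor); it just happens to have only $5\le 6$ vertices. So an argument of the kind you outline, tracking how heavy columns force empty columns and matching excess to deficit, really is what is required --- but it has to be carried out, not just proposed.
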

\begin{proof} Define a broadcast $f$ on $\TG{3}{n}$ by $f(v_{2,i})=1$ and $f(v)=0$ for all other $v \in \TG{m}{n}$, as shown in 
Figure~\ref{Fig:C3Cn}. 
Since each broadcasting vertex $v_{j,i} \in V_f^+$ has a private neighbor in its column, this broadcast is minimal \cite[Theorem 
3]{DEHHH05}. This implies that $\Gamma(\TG{3}{n}) 
\geq n$.  
To see that $\Gamma(\TG{3}{n}) \leq n$, note that if we place more than one dominating vertex in any column, then the resulting 
dominating set is not minimal.  Thus any minimal dominating set can have at most one vertex in each column. This proves that 
$\Gamma(\TG{3}{n})=n$.
\end{proof}

\begin{figure}[h] 
\begin{picture}(400,50)(-30,0)
\multiput(100,10)(20,0){5}{\circle*{3}}
\multiput(100,30)(20,0){5}{\circle*{3}}
\multiput(100,50)(20,0){5}{\circle*{3}}
\multiput(100,30)(20,0){5}{\textcolor{blue}{\circle{6}}}
\put(90,10){\line(1,0){100}}
\put(90,30){\line(1,0){100}}
\put(90,50){\line(1,0){100}}
\put(100,0){\line(0,1){60}}
\put(120,0){\line(0,1){60}}
\put(140,0){\line(0,1){60}}
\put(160,0){\line(0,1){60}}
\put(180,0){\line(0,1){60}}

\put(210,10){\line(1,0){40}}
\put(210,30){\line(1,0){40}}
\put(210,50){\line(1,0){40}}
\put(220,0){\line(0,1){60}}
\put(240,0){\line(0,1){60}}
\multiput(220,10)(20,0){2}{\circle*{3}}
\multiput(220,30)(20,0){2}{\circle*{3}}
\multiput(220,50)(20,0){2}{\circle*{3}}
\multiput(220,30)(20,0){2}{\textcolor{blue}{\circle{6}}}
\multiput(195,30)(05,0){3}{\circle*{2}}
\end{picture}
\caption{A dominating set of $\TG{3}{n}$} \label{Fig:C3Cn}
\end{figure}

The following lemma will prove useful in subsequent proofs. 

\begin{lemma} \label{lemma:2by2}
A minimal dominating set of a $2$ by $2$ grid $\grid{2}{2}$ can contain at most $2$ vertices. 
\end{lemma}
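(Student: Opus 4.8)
The plan is to exploit the fact that $\grid{2}{2}$ is nothing but the $4$-cycle $C_4$, so the whole argument reduces to a small, completely explicit verification. First I would record the structure: the four vertices $v_{1,1}, v_{1,2}, v_{2,2}, v_{2,1}$ form a single $4$-cycle in which $v_{1,1}$ and $v_{2,2}$ are non-adjacent (a \emph{diagonal} pair), as are $v_{1,2}$ and $v_{2,1}$. The key elementary observation is that in $C_4$ each vertex dominates its closed neighborhood $N[v]$, which consists of $v$ and its two neighbors, i.e., every vertex except its unique diagonal partner.

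From this observation the lemma follows quickly, and the central step is to verify that \emph{any} two distinct vertices already form a dominating set. Up to symmetry there are exactly two cases: an adjacent pair and a diagonal pair. For an adjacent pair, say $\{v_{1,1}, v_{1,2}\}$, the vertex $v_{1,1}$ covers $\{v_{1,1}, v_{1,2}, v_{2,1}\}$ while $v_{1,2}$ covers the one remaining vertex $v_{2,2}$; for a diagonal pair, say $\{v_{1,1}, v_{2,2}\}$, each of the two vertices covers three of the four vertices and the union of their closed neighborhoods is all of $V$. Hence every $2$-element subset of the vertex set dominates $\grid{2}{2}$.

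I would then close with a redundancy argument. Suppose $S$ is a dominating set with $|S| \geq 3$. Choosing any two vertices of $S$ already yields a dominating set by the previous step, so any third vertex of $S$ is superfluous: deleting it leaves a set that still dominates $\grid{2}{2}$. Therefore $S$ is not minimal, and consequently every minimal dominating set of $\grid{2}{2}$ contains at most two vertices, as claimed.

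Because the graph has only four vertices, there is no substantial obstacle here; the only point that genuinely requires care is to treat both the adjacent and the diagonal configurations when showing that every pair dominates, since these are honestly distinct cases in $C_4$ even though each turns out to be dominating.
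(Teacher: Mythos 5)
Your proof is correct and follows essentially the same route as the paper's: both arguments rest on the observation that any two vertices of $\grid{2}{2}$ already dominate the whole graph, so a dominating set with three or more vertices cannot be minimal. Your version is simply a bit more explicit, separating the adjacent and diagonal cases and spelling out the redundancy step.
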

\begin{proof}
Choose any vertex $v$ in $\grid{2}{2}$ to start building a dominating set. Let $f(v)=1$. Then $v$ and its two neighbors are 
dominated by 
$v$.  Choose any vertex $u\ne v$ in $\grid{2}{2}$ and let $f(u)=1$. This action dominates the remaining vertex.  Therefore, the 
largest minimal dominating set 
for $\grid{2}{2}$ contains at most $2$ vertices.
\end{proof}

\begin{theorem} \label{theorem:GammaTG}
The upper domination number of $\TG{m}{n}$ is \[ \Gamma( \TG{m}{n}) = \begin{cases}  \frac{m 
\cdot 
n}{2} & \text{ if } m,n \text{ even } \\ 
\frac{m \cdot(n - 1)}{2} & \text{ if } m \text{ even}, n \text{ odd } \\
\frac{(m-1) \cdot n }{2} & \text{ if } m \text{ odd},  n \text{ even } \\
\frac{(m-1) \cdot (n-1) }{2} + 1 & \text{ if } m,n \text{ odd. } \\

\end{cases} \]  

\end{theorem}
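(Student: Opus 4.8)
The plan is to establish the two inequalities $\Gamma(\TG{m}{n}) \ge V_{m,n}$ and $\Gamma(\TG{m}{n}) \le V_{m,n}$ separately, where $V_{m,n}$ denotes the claimed value, treating the four parity cases in parallel. For the lower bound I would exhibit in each case an explicit \emph{independent} dominating set of size $V_{m,n}$: since an independent dominating set has the property that every one of its vertices is its own private neighbor, it is automatically a minimal dominating set by \cite[Theorem 3]{DEHHH05}, so its cardinality is a valid lower bound for $\Gamma$. (Equivalently, one checks these sets are maximum independent sets, which are always maximal and hence minimal dominating.)

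All four constructions come from the checkerboard set $D=\{v_{i,j}: i+j \text{ even}\}$, truncated to respect parity. When $m,n$ are even, $D$ itself is independent and dominating with $\frac{mn}{2}$ vertices. When $m$ is even and $n$ is odd I take $D'=\{v_{i,j}: i+j \text{ even},\ 1\le j\le n-1\}$, deleting the last column; independence is immediate (adjacent vertices have opposite $i+j$ parity, and columns $n-1$ and $1$ are non-adjacent), and for each $i$ exactly one of $v_{i,n-1}, v_{i,1}$ lies in $D'$, so the deleted column is dominated, giving $\frac{m(n-1)}{2}$ vertices. The odd-even case is symmetric. The one real subtlety is the odd--odd case: truncating to the $(m-1)\times(n-1)$ block gives an independent set dominating everything \emph{except} the corner $v_{m,n}$, whose four neighbors all lie in the deleted row $m$ or column $n$. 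Adjoining $v_{m,n}$ itself repairs this while keeping the set independent (it is adjacent to no vertex of the block), so $v_{m,n}$ is its own private neighbor; this is exactly where the ``$+1$'' in $\frac{(m-1)(n-1)}{2}+1$ originates.

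For the upper bound, let $S$ be any minimal dominating set. The engine is the private-neighbor characterization: the map sending each $v\in S$ to a chosen private neighbor $pn(v)\in N[v]$ is injective, and each $v$ is either \emph{self-private} (isolated in the subgraph induced by $S$, so $pn(v)=v$) or has an external private neighbor in $V\setminus S$. The plan is to partition the torus into $2\times 2$ sub-squares and use Lemma~\ref{lemma:2by2} as a local ceiling, combined with a global discharging argument. Purely locally a single $2\times 2$ block can hold up to four vertices of $S$ (when their private neighbors lie outside the block), so the difficulty is that minimality is a global condition; the discharging must show that a block carrying three or four $S$-vertices forces its neighboring blocks to be sparse, because the external vertices shared across a block boundary then receive two or more broadcasts and can no longer serve as anyone's private neighbor. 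The intended outcome is an average of at most two $S$-vertices per block, yielding $|S|\le \frac{mn}{2}$ in the even--even case. In the mixed-parity cases the single leftover row or column that cannot be packed into $2\times 2$ blocks is precisely what produces the deficits $-\tfrac{m}{2}$ and $-\tfrac{n}{2}$.

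The hard part will be the upper bound in the odd--odd case. Here the target $\frac{(m-1)(n-1)}{2}+1$ is \emph{strictly} smaller than $\min\!\left(m\floor{n/2},\,n\floor{m/2}\right)$ once $m,n\ge 5$, so any argument that merely forbids two adjacent ``full'' columns (or rows) is too weak: one must exploit the non-bipartiteness of $C_m$ \emph{and} $C_n$ simultaneously, since each odd cycle direction forces a genuine packing loss. I would isolate these two losses by a boundary analysis along the leftover row and column, showing that after both odd defects are charged, at most one extra self-private vertex — the analogue of the constructed corner $v_{m,n}$ — can survive, which pins the ``$+1$''. Making the discharging bookkeeping airtight across the wrap-around boundaries, and proving this last ``exactly one survivor'' statement, is where the main effort lies.
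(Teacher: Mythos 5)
Your lower-bound half is sound and is essentially the paper's own construction: the four truncated checkerboard sets you describe coincide with the sets $V^+$ the authors exhibit, and your justification (every maximal independent set is a minimal dominating set, each of its vertices being its own private neighbor) is if anything cleaner than theirs. The gap is entirely in the upper bound, and it is twofold. First, what you have written there is a plan rather than a proof: the discharging argument that is supposed to upgrade the local ceiling of Lemma~\ref{lemma:2by2} to a global average of two $S$-vertices per $2\times 2$ block is never carried out, and you yourself flag the ``exactly one survivor'' step in the odd--odd case as the unresolved crux. You have correctly diagnosed why the local lemma does not suffice on its own --- a $2\times 2$ block can carry three or four vertices of a \emph{globally} minimal dominating set when their private neighbors lie outside the block --- but diagnosing the obstruction is not the same as overcoming it. (For what it is worth, the paper's upper bound is the single sentence ``Applying Lemma~\ref{lemma:2by2} to each case shows that there are no larger minimal dominating sets,'' which rests on exactly the conflation you identify.)

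Second, and decisively, the odd--odd upper bound cannot be completed, because the claimed value is wrong there. Your own lower-bound machinery gives $\Gamma(G)\geq \alpha(G)$ for every graph, and $\alpha(\TG{5}{5})\geq 10$: the set $\{v_{1,1},v_{1,3},v_{2,2},v_{2,5},v_{3,1},v_{3,4},v_{4,3},v_{4,5},v_{5,2},v_{5,4}\}$ is independent in $\TG{5}{5}$ (each row uses a non-adjacent column pair in $C_5$, and cyclically consecutive rows use disjoint pairs, so no two chosen vertices share a column in adjacent rows). Since each of the five rows induces a $C_5$ and can contain at most two independent vertices, this set is a maximum, hence maximal, independent set, and is therefore a minimal dominating set of size $10$, whereas the theorem asserts $\Gamma(\TG{5}{5})=\frac{4\cdot 4}{2}+1=9$. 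So no bookkeeping along the wrap-around boundaries will pin the ``$+1$'': the difficulty you sensed in the odd--odd case is not a technical obstacle but a counterexample, and any correct treatment must replace the stated value in that case rather than prove it.
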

\begin{proof}
Consider an $m \times n$ grid to be the integer lattice. Recall that we label the vertices of the graph in a grid-like fashion 
$$\{v_{1,1},v_{1,2},\dots,v_{1,n},v_{2,1},v_{2,2},\dots,v_{m,1},\dots,v_{m,n}\}.$$  
We proceed by cases to construct a minimal dominating set of maximal cardinality.

\underline{Case 1:} Assume that $m,n$ are even.  Let 
$$V^+=\{v_{1,1},v_{1,3},\dots,v_{1,n-1},v_{2,2},\dots,v_{2,n},\dots,v_{m,2},\dots,v_{m,n}\}.$$  This set is a dominating set 
because each node in this set dominates its four neighboring nodes. We see this illustrated in Figure \ref{figure:cases}(a). 

\underline{Case 2:}  Assume $m$ is even and $n$ is odd.   Let 
$$V^+=\{v_{1,1},v_{1,3},\dots,v_{1,n-2},v_{2,2},\dots,v_{2,n-1},\dots,v_{m,2},\dots,v_{m,n-1}\}.$$ By Case 1, $V^+$ dominates the 
vertices in the subgraph $G_{m,n-1}$.  The vertices in the last column, $\{v_{1,n},v_{2,n},\dots,v_{m,n}\}$, are dominated in 
$\TG{m}{n}$ by the vertices $\{v_{1,1},v_{2,n-1},v_{1,3},\dots,v_{m,n-1}\}$ respectively.  We see this illustrated in Figure~\ref{figure:cases}(b).

\underline{Case 3:} Assume $m$ is odd and $n$ is even.  Let 
$$V^+=\{v_{1,1},v_{1,3},\dots,v_{1,n-1},v_{2,2},\dots,v_{2,n},\dots,v_{m-1,2},\dots,v_{m-1,n}\}.$$ By Case 1, $V^+$ dominates the 
vertices in the subgraph $G_{m-1,n}$.  The vertices in the last row, $\{v_{m,1},v_{m,2},\dots,v_{m,n}\}$, are dominated  by the 
vertices $\{v_{1,1},v_{m-1,2},v_{1,3},\dots,v_{m-1,n}\}$ in $\TG{m}{n}$.

\underline{Case 4:} Assume $m$ and $n$ are both odd.  Let 
$$V^+=\{v_{1,1},v_{1,3},\dots,v_{1,n-2},v_{2,2},\dots,v_{2,n-1},\dots,v_{m-1,2},\dots,v_{m-1,n-1},v_{m,n}\}.$$  By Cases 2 and 3, 
$V^+$ dominates the vertices in the subgraphs $G_{m,n-1}$ and $G_{m-1,n}$.  The vertex $v_{m,n}$ is in the dominating set itself 
and so is covered.  We see this in Figure~\ref{figure:cases}(c). 

Applying Lemma~\ref{lemma:2by2} to each case shows that there are no larger minimal dominating sets for each case.
\end{proof}

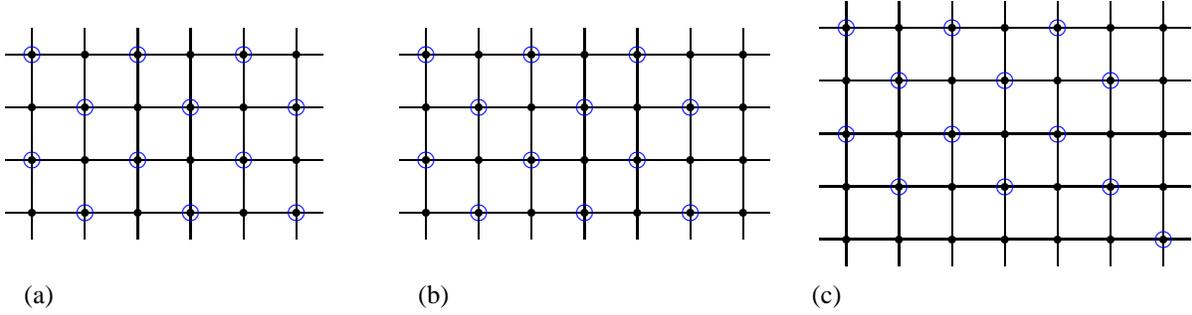
\begin{figure}
\begin{subfigure}[]{}
\begin{picture}(140,100)(00,-10)
\multiput(00,10)(20,0){6}{\circle*{3}}
\multiput(00,30)(20,0){6}{\circle*{3}}
\multiput(00,50)(20,0){6}{\circle*{3}}
\multiput(00,70)(20,0){6}{\circle*{3}}

\put(-10,10){\line(1,0){120}}
\put(-10,30){\line(1,0){120}}
\put(-10,50){\line(1,0){120}}
\put(-10,70){\line(1,0){120}}
\put(00,0){\line(0,1){80}}
\put(20,0){\line(0,1){80}}
\put(40,0){\line(0,1){80}}
\put(60,0){\line(0,1){80}}
\put(80,0){\line(0,1){80}}
\put(100,0){\line(0,1){80}}
\multiput(00,70)(20,20){1}{\textcolor{blue}{\circle{6}}}
\multiput(00,30)(20,20){3}{\textcolor{blue}{\circle{6}}}
\multiput(20,10)(20,20){4}{\textcolor{blue}{\circle{6}}}
\multiput(60,10)(20,20){3}{\textcolor{blue}{\circle{6}}}
\multiput(100,10)(20,20){1}{\textcolor{blue}{\circle{6}}}
\end{picture}
\end{subfigure}
\begin{subfigure}[]{}
\begin{picture}(140,100)(30,-10)
\multiput(30,10)(20,0){7}{\circle*{3}}
\multiput(30,30)(20,0){7}{\circle*{3}}
\multiput(30,50)(20,0){7}{\circle*{3}}
\multiput(30,70)(20,0){7}{\circle*{3}}
\put(20,10){\line(1,0){140}}
\put(20,30){\line(1,0){140}}
\put(20,50){\line(1,0){140}}
\put(20,70){\line(1,0){140}}
\put(30,0){\line(0,1){80}}
\put(50,0){\line(0,1){80}}
\put(70,0){\line(0,1){80}}
\put(90,0){\line(0,1){80}}
\put(110,0){\line(0,1){80}}
\put(130,0){\line(0,1){80}}
\put(150,0){\line(0,1){80}}
\multiput(30,70)(20,20){1}{\textcolor{blue}{\circle{6}}}
\multiput(30,30)(20,20){3}{\textcolor{blue}{\circle{6}}}
\multiput(50,10)(20,20){4}{\textcolor{blue}{\circle{6}}}
\multiput(90,10)(20,20){3}{\textcolor{blue}{\circle{6}}}
\multiput(130,10)(20,20){1}{\textcolor{blue}{\circle{6}}}
\end{picture}
\end{subfigure}
\begin{subfigure}[]{}
\begin{picture}(150,100)(80,00)
\multiput(90,10)(20,0){7}{\circle*{3}}
\multiput(90,30)(20,0){7}{\circle*{3}}
\multiput(90,50)(20,0){7}{\circle*{3}}
\multiput(90,70)(20,0){7}{\circle*{3}}
\multiput(90,90)(20,0){7}{\circle*{3}}
\put(80,10){\line(1,0){140}}
\put(80,30){\line(1,0){140}}
\put(80,50){\line(1,0){140}}
\put(80,70){\line(1,0){140}}
\put(80,90){\line(1,0){140}}
\put(90,00){\line(0,1){100}}
\put(110,00){\line(0,1){100}}
\put(130,00){\line(0,1){100}}
\put(150,00){\line(0,1){100}}
\put(170,00){\line(0,1){100}}
\put(190,00){\line(0,1){100}}
\put(210,00){\line(0,1){100}}
\multiput(90,90)(20,20){1}{\textcolor{blue}{\circle{6}}}
\multiput(90,50)(20,20){3}{\textcolor{blue}{\circle{6}}}
\multiput(110,30)(20,20){4}{\textcolor{blue}{\circle{6}}}
\multiput(150,30)(20,20){3}{\textcolor{blue}{\circle{6}}}
\multiput(190,30)(20,20){1}{\textcolor{blue}{\circle{6}}}
\multiput(210,10)(20,20){1}{\textcolor{blue}{\circle{6}}}
\end{picture}
\end{subfigure}
\caption{An example of Cases 1, 2, and 4 in Theorem~\ref{theorem:GammaTG}.} \label{figure:cases}
\end{figure}

We now consider the upper broadcast domination number $\Gamma_b$ of cycles and products of cycles.

\begin{theorem} \label{thm:CycleGammab}
Let $C_n$ denote the cycle graph with $n$ nodes. 
If $n=3$, then $\Gamma_b(C_3)=1$.
If $n >3$, then \[ \Gamma_b(C_n) = \begin{cases} n-2 & \text{ if } n \text{ is } even \\ n-3 & \text{ if } n \text{ is } odd. 
\end{cases}   \]
\end{theorem}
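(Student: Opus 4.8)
The plan is to prove matching bounds, handling $n=3$ by hand: a single vertex broadcasting at strength $1$ is a minimal dominating broadcast, while any two broadcasting vertices force redundancy and fail minimality, so $\Gamma_b(C_3)=1$. Throughout write $r=\floor{n/2}$, so every vertex of $C_n$ has eccentricity $r$ and a broadcasting vertex of strength $s$ dominates exactly the arc of $2s+1$ vertices within distance $s$. For the lower bound when $n\ge 4$ I would exhibit an explicit minimal dominating broadcast of cost $2(r-1)$, which equals $n-2$ for even $n$ and $n-3$ for odd $n$. For even $n$ place two towers of strength $r-1$ at adjacent vertices; for odd $n$ place two towers of strength $r-1$ at vertices two apart. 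In each case one checks that the two arcs together cover $C_n$ and that each tower retains a private neighbor, namely the single vertex on the far side that the other tower just fails to reach, so the broadcast is minimal with $\cost{f}=2(r-1)$.

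For the upper bound I would first observe that a tower of strength $r$ already dominates all of $C_n$, since its arc has at least $2r+1\ge n$ vertices. Hence in a minimal dominating broadcast it cannot coexist with any other tower, for then every vertex would hear it and no second tower could keep a private neighbor; such a broadcast has a single tower and cost $r\le 2(r-1)$ for $n\ge 4$. I may therefore assume every broadcasting vertex has strength at most $r-1$, so each broadcast arc is a proper sub-arc with two distinct endpoints, and minimality becomes the clean condition that each arc has an endpoint covered by no other arc (otherwise reducing that tower by one would still dominate). In particular no arc is contained in another.

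Ordering the arcs cyclically by left endpoint, which is possible precisely because there are no containments, the absence of gaps in the cover together with the private-endpoint condition translate into overlaps $o_i$ of cyclically consecutive arcs for which each arc is flanked by at least one zero overlap. Equivalently, no two cyclically consecutive overlaps are positive, so the positive overlaps form an independent set and the arcs group into blocks meeting at seams, each block being either a single arc or an overlapping pair. Each block covers a contiguous piece of $C_n$ of length $\ell$, these lengths sum to $n$, and the no-containment inequality shows a singleton block contributes $(\ell-1)/2$ to $\cost{f}$ while a pair contributes at most $\min(\ell-2,\,2(r-1))$. Maximizing $\cost{f}$ over all such block decompositions subject to $\sum \ell=n$ then forces a single covering pair and yields $\cost{f}\le n-2$; for odd $n$ the cap $2(r-1)$ becomes binding on that pair and sharpens the bound to $n-3$.

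The main obstacle is exactly the upper bound for broadcasts with three or more towers, where a naive strength-times-count estimate is far too weak. The decisive step is recognizing that minimality forbids two consecutive overlaps, which caps every maximal cluster of mutually overlapping towers at two arcs and thereby reduces the global problem to the per-block accounting above. The remaining delicacy is the parity refinement for odd $n$, which requires carrying the exact cap $2(r-1)$ on a pair's cost rather than the cruder bound $\ell-2$.
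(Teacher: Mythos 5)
Your lower bound is exactly the paper's: two towers of strength $\lfloor n/2\rfloor-1$ placed adjacent (for $n$ even) or at distance two (for $n$ odd), each keeping a private vertex on the far side. The upper bound, however, contains a genuine gap. The step that fails is the claim that minimality forces every broadcast arc to have an endpoint covered by no other arc. That is correct for towers of strength at least $2$ (reducing by one loses only the two arc endpoints), but for a tower of strength $1$ at $v$, reducing by one deletes the entire arc $\{v-1,v,v+1\}$, so the broadcast can be minimal because $v$ \emph{itself} is private even though both arc endpoints $v\pm1$ are covered by other towers. Concretely, on $C_9$ put strength-$1$ towers at $v_2,v_4,v_6,v_8$: the arcs $\{1,2,3\},\{3,4,5\},\{5,6,7\},\{7,8,9\}$ cover the cycle, the broadcast is minimal ($v_1$ is private to $v_2$, $v_9$ to $v_8$, and $v_4,v_6$ are their own private neighbors), yet three cyclically consecutive overlaps are positive. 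This breaks your key structural claim that positive overlaps form an independent set and hence that blocks are singletons or pairs; chains of arbitrarily many mutually overlapping arcs do occur in minimal broadcasts, and your per-block accounting never addresses them.

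This is precisely the dichotomy the paper's proof is built around: by the cited result of Dunbar et al., a broadcasting vertex either has a private neighbor at distance exactly $f(v)$, \emph{or} has $f(v)=1$ and is its own private neighbor. The paper's map $\varepsilon$ assigns the geodesic edges in the first case and a single incident edge in the second, and the disjointness of these edge sets yields $\cost{g}\le\sum_v|\varepsilon(v)|\le n-2$ uniformly, with the odd case sharpened by analyzing how two disjoint paths of total length $n-2$ must sit on $C_n$. Your approach could likely be repaired -- a maximal chain has all interior arcs of strength $1$ and self-private, so a chain of $k$ arcs covering $\ell$ contiguous vertices still contributes at most $\ell-2$ when $k\ge2$ -- but as written the case analysis is incomplete, and the missing case is not a corner case but the main source of difficulty.
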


\begin{proof}
The result for $n=3$ becomes clear when $V_f^+=\{v_i\}$ with $f(v_i)=1$. 
Now, assume $n>3$. Let $g: V \rightarrow \{0,1, \ldots, \diam{C_n}\}$ denote an arbitrary minimal broadcast on $C_n$,
 and let $V_g^+$ denote the set of broadcasting vertices.
 From~\cite[Theorem 3]{DEHHH05}, if $v \in V_g^+$, then $v$ has a private $g$-neighbor (denoted $v_p$) such that either 
\begin{enumerate}[\rm (i)]
\item $g(v)=d(v, v_p)$, or 
\item $g(v)=1$ 
and $v =v_p$. 
\end{enumerate}

As in the proof of Theorem 5 by Dunbar et al. ~\cite[Theorem 5]{DEHHH05}, we define
a function  $\varepsilon:V_g^+ \rightarrow E$ from the broadcasting vertices $V_g^+$ into the edge set $E$ of $C_n$ as follows: 
\begin{itemize}
\item if $v  \in  V_g^+$ satisfies (i), then $\varepsilon(v)$ 
is the set of all edges that lie on the geodesic path between $v$ and $v_p$. Hence $| \varepsilon(v)| \geq g (v)$;
\item if $v$ satisfies (ii), then $\varepsilon(v) = \{e_v \}$, where $e_v$
is any edge incident with $v$.
\end{itemize}

As stated in the proof of~\cite[Theorem 5]{DEHHH05} we know 
$\cost{g} \leq \sum_{v \in V_g^+} |\varepsilon(v)|$, and for 
any pair of 
distinct 
vertices $u,v \in V_g^+$ the paths
$\varepsilon(u) \cap \varepsilon(v) = \emptyset$ are disjoint. 
We use these two facts to prove that \begin{equation} \cost{g} \leq \sum_{v \in V_g^+} |\varepsilon(v)| \leq 
\begin{cases} 
n-2 
& \text{ when } n \text{ is even} \\ n-3 & \text{ when } n \text{ is odd. } \end{cases}  \label{eqn:main1} \end{equation}

Note that if $g$ is a minimal broadcast on $C_n$ with $V_g^+ = \{v\}$,
then $\cost{g} = g(v) \leq \diam{C_n} = \lfloor \frac{n}{2} \rfloor$.  So any minimal broadcast 
$g: C_n \rightarrow \{0,1, 
\ldots, \diam{C_n}\} $ with $\cost{g} \geq \lfloor \frac{n}{2} \rfloor $ must contain
two or more broadcasting vertices.  
Also note that if $g(v)= \diam{C_n} = \lfloor \frac{n}{2} \rfloor$ for some $v\in C_n$,
then every vertex in $C_n$ hears the broadcast from $v$.  
Hence any minimal broadcast $g$ with two or more broadcasting
vertices must have $g(v) < \diam{C_n} $ for 
each $v \in V_g^+$. 

To prove $\cost{g} \leq n-2$, we
assume for the sake of contradiction that there is a minimal broadcast $g:C_n 
\rightarrow \{0,1, \ldots, \diam{C_n} \}$ such 
that $\cost{g}  > n-2$.  Then $\sum_{v \in V_g^+} |\varepsilon(v)| > n-2.$  
But any collection of more than $n-2$ edges in $C_n$ contains a single path of 
length $n-1$, and if the image of $\varepsilon$ is a path $ P_{n-1}$ then $V_{g}^+$ 
contains only a single vertex.  Thus $\cost{g} \leq \lfloor \frac{n}{2}\rfloor <n-1$.
Having arrived at a contradiction, we conclude that  $\cost{g} \leq n-2$ for
any minimal broadcast $g$ and
$\Gamma_b(C_n) \leq n-2$.

Next we assume that $n$ is odd and show that $\cost{g} \leq n-3$ for any minimal
broadcast on $C_n$.  Assume for sake of contradiction, that $g$ is a minimal broadcast on $C_n$ 
with $\cost{g} =n-2$.  Then $\cup_{v \in V_g^+} \varepsilon(v)$ is a disjoint
union of two or more paths in $C_n$.
Up to symmetry, there is only one way to place two or more disjoint paths of length at most
$ \lfloor \frac{n}{2} \rfloor$ on $C_n$
whose union contains $n-2$
edges: that is to place two paths $\varepsilon(v) $ and $\varepsilon(u)$
with $|\varepsilon(v)| =\frac{n-1}{2}$ and $|\varepsilon(u)| = \frac{n-3}{2}$
such that these paths have an edge separating them on each side.
However, in this situation, $g(v)= \diam{C_n} =\frac{n-1}{2},$ and the broadcast
is not minimal.  
Hence $\Gamma_b(C_n)\leq n-3$ when $n$ is 
odd.

We have proved that any minimal broadcast $g$ on $C_n$ must 
satisfy the inequalities in Equation \eqref{eqn:main1}, we now
define minimal broadcasts $f_e$ and $f_o$ on $C_n$ such that 
$\cost{f_e} =n-2$ when $n$ is even and $\cost{f_o} = n-3$ when $n$ is 
odd.  
When $n$ is even we define $f_e(v_1) = f_e(v_n) = \frac{n-2}{2}$. 
This broadcast is minimal because $v_1$ has a private $f_e$-neighbor 
$v_{n/2}$ and $v_n$ has a private $f_e$-neighbor 
$v_{(n+2)/2}$.  
When $n$ is odd we define $f_o(v_2) = f_o(v_n) = \frac{n-3}{2}$.  This broadcast is minimal because $v_2$ has a 
private $f_o$-neighbor $v_{(n+1)/2}$ and $v_n$ has a private $f_o$-neighbor 
$v_{(n+3)/2}$.  
\end{proof}

\begin{example}
We demonstrate Theorem~\ref{thm:CycleGammab} below in the cases where $n=8$ and $n=7$.  Note that for the first graph with $n=8$, 
we have $V_{f_e}^+ =\{v_1, v_8\}$ with $f_e(v_1)=f_e(v_8)=3$.  
In this case the private neighbor of $v_1$ is the vertex $v_4$ 
and the private neighbor of $v_8$ is the vertex $v_5$.

\begin{picture}(400,80)
\multiput(100,10)(20,0){2}{\circle*{3}}
\multiput(80,30)(60,0){2}{\circle*{3}}
\multiput(80,50)(60,0){2}{\circle*{3}}
\multiput(100,70)(20,0){2}{\circle*{3}}
\put(100,10){\line(1,0){20}}
\put(100,70){\line(1,0){20}}
\put(140,50){\textcolor{blue}{\line(-1,1){20}}}
\put(140,30){\textcolor{blue}{\line(-1,-1){20}}}
\put(140,30){\textcolor{blue}{\line(0,1){20}}}
\put(80,50){\textcolor{red}{\line(1,1){20}}}
\put(100,10){\textcolor{red}{\line(-1,1){20}}}
\put(80,30){\textcolor{red}{\line(0,1){20}}}

\put(95,75){$v_8$}
\put(95,0){$v_{5}$}
\put(115,75){$v_1$}
\put(115,0){$v_{4}$}

\multiput(300,10)(20,0){2}{\circle*{3}}
\multiput(280,30)(60,0){2}{\circle*{3}}
\multiput(280,50)(60,0){2}{\circle*{3}}
\multiput(310,70)(20,0){1}{\circle*{3}}
\put(300,10){\line(1,0){20}}

\put(340,50){\textcolor{black}{\line(-3,2){30}}}
\put(340,30){\textcolor{blue}{\line(-1,-1){20}}}
\put(340,30){\textcolor{blue}{\line(0,1){20}}}
\put(280,50){\textcolor{black}{\line(3,2){30}}}
\put(300,10){\textcolor{red}{\line(-1,1){20}}}
\put(280,30){\textcolor{red}{\line(0,1){20}}}

\put(270,55){$v_7$}
\put(340,55){$v_2$}
\put(295,0){$v_{5}$}
\put(315,0){$v_{4}$}
\end{picture}

For the second graph $C_7$,
we let $V_{f_0}^+=\{v_2, v_7\}$ with $f_o(v_2)=f_o(v_7) =2$.  
In this case the private neighbor of $v_2$ 
is the vertex $v_4$ and the private vertex of $v_7$ is the vertex $v_5$.

\end{example}

With Theorem \ref{thm:CycleGammab} in hand, we are ready to prove the second
main result of this section.

\begin{theorem} \label{theorem:TGIF}
 For any $3 \leq m \leq n$, the upper broadcast domination number of $\TG{m}{n}$ is
 \[ \Gamma_b ( \TG{m}{n}) = m \cdot \Gamma_b(C_n). \]
\end{theorem}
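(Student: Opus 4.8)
The plan is to prove the two inequalities $\Gamma_b(\TG{m}{n}) \ge m \cdot \Gamma_b(C_n)$ and $\Gamma_b(\TG{m}{n}) \le m \cdot \Gamma_b(C_n)$ separately, with essentially all of the difficulty concentrated in the upper bound.

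For the lower bound I would exhibit an explicit minimal broadcast of cost $m \cdot \Gamma_b(C_n)$ by copying the extremal cycle broadcast of Theorem~\ref{thm:CycleGammab} onto each of the $m$ rows. When $n$ is even I set $f(v_{i,1}) = f(v_{i,n}) = \frac{n-2}{2}$ for every row $i$ (and the analogous assignment $f(v_{i,2}) = f(v_{i,n}) = \frac{n-3}{2}$ when $n$ is odd), with $f = 0$ elsewhere. Each row is then a copy of $C_n$ carrying a dominating broadcast, so $f$ dominates $\TG{m}{n}$ and has cost $m \cdot \Gamma_b(C_n)$. The only real point to check is minimality, i.e. that the within-row private neighbor guaranteed by Theorem~\ref{thm:CycleGammab} remains private in the torus. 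This follows from the product distance formula $d((i,j),(i',j')) = d_{C_m}(i,i') + d_{C_n}(j,j')$: the within-row private neighbor sits at horizontal distance exactly equal to its tower's strength, so any broadcasting vertex in a different row lies at distance at least one greater than that strength and is therefore not heard. Hence every tower keeps a private neighbor and $f$ is minimal, giving $\Gamma_b(\TG{m}{n}) \ge m \cdot \Gamma_b(C_n)$.

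For the upper bound I would generalize the edge-counting argument already used in the proof of Theorem~\ref{thm:CycleGammab}, which is the method of Dunbar et al.~\cite[Theorem 5]{DEHHH05}. Given an arbitrary minimal broadcast $g$ on $\TG{m}{n}$, each broadcasting vertex $v \in V_g^+$ has a private neighbor $v_p$, and assigning to $v$ the edges of a geodesic $\varepsilon(v)$ from $v$ to $v_p$ yields a family of \emph{edge-disjoint} paths with $\cost{g} = \sum_{v \in V_g^+} |\varepsilon(v)|$. I would partition $E(\TG{m}{n})$ into its $mn$ horizontal edges (forming the $m$ row-cycles, each a $C_n$) and its $mn$ vertical edges, writing each geodesic length as a horizontal part plus a vertical part. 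The target is then to charge the total cost onto the $m$ horizontal cycles so that each receives at most $\Gamma_b(C_n)$, reducing the whole estimate to $m$ applications of the cycle bound from Theorem~\ref{thm:CycleGammab}.

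The main obstacle, and the step I expect to require the most care, is the coupling between horizontal and vertical travel. A single private geodesic can pass through several rows, so horizontal edges deposited in a given row may come from a tower located elsewhere, and a naive separate count of horizontal versus vertical edges is hopelessly lossy—it gives only the trivial bound $2mn$, far larger than $m(n-2)$. The crux is therefore a trade-off: vertical displacement in a geodesic strictly decreases the horizontal reach of the corresponding tower, so a broadcast that uses vertical edges should be comparable to the purely horizontal, row-by-row configuration of the construction without gaining cost. I would try to make this precise by choosing each geodesic to concentrate its horizontal travel within a single row, and then argue, exactly as in the ``no near-Hamiltonian arc'' step of Theorem~\ref{thm:CycleGammab}, that the edge-disjoint horizontal arcs in any one row cannot total more than $n-2$ edges (respectively $n-3$ when $n$ is odd) without either forcing a tower whose strength violates the torus eccentricity constraint or destroying the privacy of some neighbor. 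Combining these per-row cycle bounds with the vertical-reduces-horizontal trade-off should yield $\cost{g} \le m \cdot \Gamma_b(C_n)$, matching the construction and completing the proof.
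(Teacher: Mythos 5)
Your lower bound is correct and is essentially the construction the paper intends: one copy of the extremal cycle broadcast from Theorem~\ref{thm:CycleGammab} in each row, with minimality certified by the product distance formula $d((i,j),(i',j'))=d_{C_m}(i,i')+d_{C_n}(j,j')$, which keeps each within-row private neighbor private in the torus. (If anything you are more careful than the paper here: the displayed broadcast in the paper assigns strength $\Gamma_b(C_n)$ to every vertex of two full columns, which read literally costs $2m\,\Gamma_b(C_n)$ and need not satisfy $f(v)\le e(v)$; the intent is clearly the row-by-row copy you describe.)

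The upper bound is where your proposal has a genuine gap. The paper argues by pigeonhole over the $m$ row-cycles; you instead set up the edge-counting machinery of Dunbar et al.\ on the torus and propose to charge each private-neighbor geodesic to a row. You correctly identify the obstruction --- geodesics that leave their row --- but you do not resolve it; ``I would try to make this precise'' and ``should yield'' are carrying the entire weight of the argument. Concretely: (1) a tower in row $i$ whose private neighbor lies in row $i'\neq i$ contributes up to $\lfloor m/2\rfloor$ vertical edges that belong to no row-cycle, and since a geodesic can have length $\lfloor m/2\rfloor+\lfloor n/2\rfloor$, which exceeds $n-2$ when, say, $m=n$ is even, a single such tower already overfills the budget of whichever row you charge it to; (2) the per-row bound of $n-2$ (resp.\ $n-3$) from Theorem~\ref{thm:CycleGammab} rests on there being at least two nonempty edge-disjoint arcs in that row and on excluding a single arc of length $n-1$, and neither fact follows from minimality of $g$ on the torus, because the restriction of $g$ to a row is in general not a legal, dominating, or minimal broadcast on $C_n$. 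Until you exhibit a charging scheme under which every row --- vertical edges included --- receives at most $\Gamma_b(C_n)$ edges, the inequality $\cost{g}\le m\cdot\Gamma_b(C_n)$ is not established.
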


\begin{proof}
Let $f$ be the following broadcast on $\TG{m}{n}$, where $3\leq m\leq n$
$$f(v)=\begin{cases}
\Gamma_b(C_n)\hspace{3mm}\text{if }v\in\{v_{j,k} \hspace{1mm}|\hspace{1mm} j\in\{1,2,\dots,m\} \text{ and } k\in\{1,2\}\} \\
0\hspace{12mm} \text{otherwise}.
\end{cases}$$
This broadcast shows that $\Gamma_b ( \TG{m}{n}) \geq m \cdot \Gamma_b(C_n)$.

Suppose there is a minimal broadcast $g$ such that $\cost{g}>\cost{f}=m\Gamma_b(C_n)$.  Then 
by the pigeonhole 
principle, 
there must exist at least one row of vertices, 
say $\{v_{i,1},\dots,v_{i,n}\}$ such that  the cost of the broadcast in that particular row is greater than $\Gamma_b(C_n)$.  
Then 
the graph contains a subgraph of a cycle $C_n$ with minimal broadcast more than $\Gamma_b(C_n)$.  This contradicts 
Theorem~\ref{thm:CycleGammab}.  Therefore  $ \Gamma_b ( \TG{m}{n}) = m\cdot \Gamma_b(C_n)  $ for all $1 \leq m \leq n$. 
\end{proof}

\section{Diametrical Graphs}\label{diametrical}

We call a graph $G$ a \emph{diametrical graph} if 
$\diam{G}=\Gamma_b(G)$.  Diametrical graphs have the property that their most costly minimal broadcast can be obtained by a single 
broadcasting node $v$ lying at one end of a diametrical path in $G$ by setting $f(v)=\diam{G}$.    (However, there may be many more minimal broadcasts whose cost is $\diam{G}$.)  
Predictably, we say a graph $G$ is 
\emph{non-diametrical} if it is not diametrical.

The goal of this section is to prove Theorem \ref{thm:MainDiametrical}, which classifies all diametrical trees by showing that 
they form a subfamily of lobster graphs with special restrictions placed on the shape and spacing of their \emph{limbs}, that is, subtrees protruding from the central diametrical path of the lobster graph.  
This answers an open question posed by~\cite[Problem 7]{Mynhardt}.

\begin{theorem}\label{thm:MainDiametrical}
 A tree $T$ is diametrical if and only if it is a lobster graph containing only limbs of types $(A)$, $(B)$, and $(C)$ depicted in 
Figure \ref{fig:3limbs},  the number of limbs is less than half the diameter of the graph, and the distance between each pair of 
adjacent limbs or an endpoint $e_i$ satisfies the following inequalities.

\begin{center}
\begin{tabular}{| c|c| c| } \hline 
$d(A,A) \geq 4$ & $d(A,B) \geq 3$ & 
$d(A,C) \geq 3$ \\ $d(B,B) \geq 3$ &
$d(B,C) \geq 2$ & 
$d(C,C) \geq 2$  \\ 
$d(e_i,A) \geq 2$ & $d(e_i,B) \geq 2$ & $d(e_i,C) \geq 1$  \\ \hline
\end{tabular}
\end{center}
\end{theorem}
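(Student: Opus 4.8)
The plan is to first record that for \emph{any} tree $T$ we automatically have $\Gamma_b(T)\ge \diam{T}$: if $v$ is an endpoint of a diametral path then $e(v)=\diam{T}$, so the single broadcast with $f(v)=\diam{T}$ (and $0$ elsewhere) is legal, dominates $T$ since every vertex lies within distance $e(v)$ of $v$, and is minimal because lowering $f(v)$ leaves the opposite endpoint unheard. Consequently $T$ is diametrical if and only if \emph{no} minimal dominating broadcast on $T$ costs more than $\diam{T}$, and the entire theorem reduces to determining exactly which trees admit such an over-costly broadcast.

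For the backward (``if'') direction I would fix a diametral spine $p_0p_1\cdots p_D$ and let $g$ be an arbitrary minimal broadcast. Following the proof of Theorem~\ref{thm:CycleGammab} and \cite[Theorem~5]{DEHHH05}, I assign to each broadcasting vertex $v$ the geodesic $\varepsilon(v)$ running to its private neighbor; since $T$ is a tree these geodesics are edge-disjoint and have length exactly $g(v)$, so $\cost{g}$ equals the number of edges covered by the family $\{\varepsilon(v)\}$. Writing $|E|=D+(\text{number of limb edges})$, proving $\cost{g}\le D$ is equivalent to showing that the number of \emph{uncovered} edges is at least the number of limb edges. The heart of the argument is a charging scheme: I would show that each legal limb forces, within a bounded ``zone'' of the spine around its attachment point, at least as many uncovered edges as it has limb edges (one for type $C$, two for types $A$ and $B$)---the point being that, to give a limb-based broadcasting vertex a private neighbor, one must prevent a spine broadcast from overrunning the attachment, which sacrifices a nearby spine edge. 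The pairwise distance inequalities $d(A,A)\ge 4$, $d(A,B)\ge 3,\ldots,d(C,C)\ge 2$ are calibrated precisely so that these zones are disjoint, and the endpoint bounds $d(e_i,\cdot)$ together with the hypothesis that the number of limbs is less than $\diam{T}/2$ guarantee the zones fit inside the spine without spilling past its endpoints; summing the charges then yields $\cost{g}\le D$.

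For the forward (``only if'') direction I would argue the contrapositive: if $T$ is not a legal lobster then it admits a minimal broadcast of cost exceeding $\diam{T}$. There are four failure modes, and for each I would exhibit an explicit minimal broadcast and verify, via the private-neighbor criterion of \cite[Theorem~3]{DEHHH05}, both that it is minimal and that its cost is at least $D+1$. If $T$ is not a lobster, some vertex lies at distance $\ge 3$ from the spine (or a branch vertex creates a forbidden subtree) and broadcasting from its far end gains cost; if $T$ contains an illegal limb the same idea applies locally; if two legal limbs violate their distance bound I place small private broadcasts on the two limbs and on both adjacent spine segments so that no single spine edge need be sacrificed, netting extra cost; and if there are at least $\diam{T}/2$ limbs the accumulated limb broadcasts outpace the spine savings (the star $K_{1,3}$, where three unit broadcasts give cost $3>2=\diam{K_{1,3}}$, is the prototype).

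The hard part will be the charging argument in the backward direction: showing that the specific thresholds $4,3,3,3,2,2,\dots$ are simultaneously necessary and sufficient for the per-limb zones to remain disjoint and internal to the spine, and in particular handling the interaction of two \emph{adjacent} limbs placed at the minimum allowed distance, where the competing private-neighbor requirements are tightest. I expect the endpoint conditions $d(e_i,A)\ge 2$, $d(e_i,B)\ge 2$, $d(e_i,C)\ge 1$ and the global count condition to enter precisely here, ensuring that the outermost zones do not extend beyond $p_0$ or $p_D$ and that a boundary limb cannot borrow an uncovered edge for which a neighboring limb has already been charged.
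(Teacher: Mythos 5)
Your reduction to showing that no minimal dominating broadcast costs more than $\diam{T}$ is correct, and your forward (``only if'') direction --- exhibiting, for each violation, an explicit minimal broadcast of cost at least $\diam{T}+1$ and certifying minimality via private neighbors --- is exactly how the paper argues (Lemmas \ref{lemma:Lobster}, \ref{firstlemma}, \ref{lemma:et}, \ref{lemma:illegalspacing}). The gap is in the backward direction. Your central claim there --- that each legal limb forces, inside a bounded zone around its attachment point, at least as many uncovered edges of the family $\{\varepsilon(v)\}$ as it has limb edges, with the distance inequalities making the zones disjoint and the endpoint and limb-count hypotheses serving only to keep the outermost zones from spilling past $v_0$ and $v_D$ --- is false as stated. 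The paper's own example following Corollary \ref{cor:StrongMo} refutes it: the caterpillar on the spine $v_0\cdots v_6$ with one pendant at each of $v_1$, $v_3$, $v_5$ satisfies every inequality in the table ($d(e_1,C)=1$, $d(C,C)=2$, $d(C,e_2)=1$), its three disjoint two-edge zones tile the entire spine, and yet the strength-one broadcast on $v_0$, $v_2$, $v_4$, $v_6$ and the three pendants is minimal of cost $7=\diam{T}+1$; in that broadcast the zone of the limb at $v_1$ contains no uncovered edge at all. So the condition that the number of limbs be less than half the diameter is not a boundary effect absorbed by disjoint-zone bookkeeping; it is a genuinely global constraint whose job is to exclude precisely these all-strength-one broadcasts, and the paper must handle it with a separate counting device (the breadth-first labelling of Lemma \ref{lemma:StrongMo}). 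Until the charging lemma is reformulated to incorporate that regime, the ``if'' direction is not established.

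Beyond this, the remaining hard content of the sufficiency direction --- verifying that at the minimum allowed separations every minimal broadcast on each local configuration is bounded by the local diameter --- is promised but not performed. The paper does this by splitting the tree with Lemma \ref{lemma:concatenate} and exhaustively listing the finitely many minimal labellings of each limb type and of each closest admissible pair of limbs (Lemmas \ref{lemma:3typesoflimbs} and \ref{lemma:limbdistances}). Your edge-disjoint-geodesic framework borrowed from Theorem \ref{thm:CycleGammab} is a reasonable alternative scaffold and could in principle yield a cleaner argument than the paper's case pictures, but as written the proposal identifies the right quantity to bound without supplying a correct mechanism for bounding it.
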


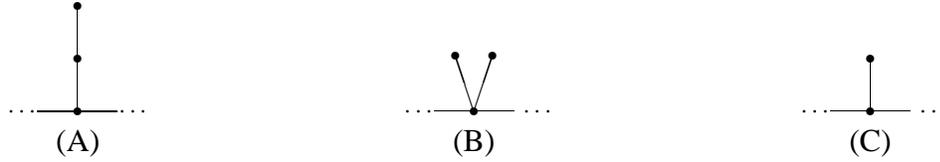
\begin{figure}[h]
\begin{picture}(350,50)(0,-10)
\multiput(40,0)(20,0){1}{\circle*{3}}

\multiput(15,0)(4,0){3}{\circle*{1}}
\put(25,0){\textcolor{black}{\line(1,0){30}}}
\multiput(57,0)(4,0){3}{\circle*{1}}
\linethickness{.1pt}
\put(40,0){\line(0,1){40}}
\put(40,20){\circle*{3}}
\put(40,40){\circle*{3}}

\put(32,-15){(A)}
\multiput(190,0)(20,0){1}{\circle*{3}}
\put(175,0){\textcolor{black}{\line(1,0){30}}}
\multiput(165,0)(4,0){3}{\circle*{1}}
\multiput(210,0)(4,0){3}{\circle*{1}}
\put(190,0){\textcolor{black}{\line(1,3){7}}}
\put(190,0){\textcolor{black}{\line(-1,3){7}}}
\put(183,21){\circle*{3}}
\put(197,21){\circle*{3}}
\put(182,-15){(B)}

\multiput(340,0)(20,0){1}{\circle*{3}}
\put(325,0){\textcolor{black}{\line(1,0){30}}}

\multiput(315,0)(4,0){3}{\circle*{1}}
\multiput(360,0)(4,0){3}{\circle*{1}}

\put(340,0){\line(0,1){20}}
\put(340,20){\circle*{3}}
\put(332,-15){(C)}
\end{picture}
\caption{3 types of legal limbs} \label{fig:3limbs}
\end{figure}

Before proceeding to the proof of Theorem \ref{thm:MainDiametrical} we give an example of how to apply the theorem.

\begin{example} 
Figure 
\ref{Diamexamples} shows a diametrical tree on the left and a non-diametrical tree on the right. The tree on the left contains all 
three types of limbs (A), (B), and (C) that are allowed in a diametrical tree 
and conforms to
  the spacing constraints described in Theorem \ref{thm:MainDiametrical}:

\[ d(e_1, A) = 2,\quad d(A,C_1) =3, \quad d(C_1, B) = 3, \quad d(B,C_2) = 3, \quad d(C_2,e_2) =1.  \]
The tree on the right is not diametrical as it contains an illegal limb $X$ of length $3$; it also contains a pair of legal 
limbs of types $B$ and $C$ that are too close together with $d(B,C) = 1$.

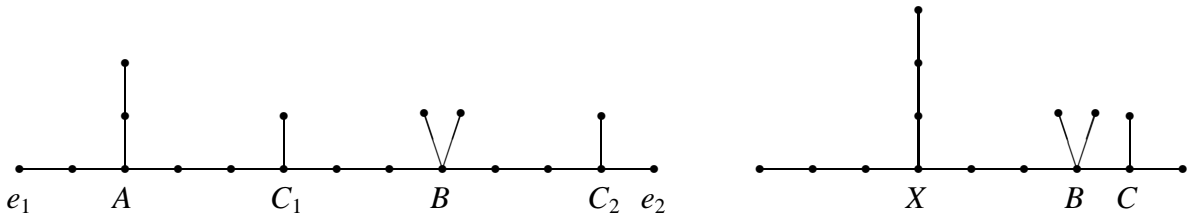
\begin{figure}[h]
\begin{picture}(320,60)(60,-10)
\multiput(00,00)(20,0){13}{\circle*{3}}
\put(00,00){\textcolor{black}{\line(1,0){240}}}
\put(40,00){\line(0,1){40}}
\put(-05,-15){$e_1$}
\put(40,20){\circle*{3}}
\put(40,40){\circle*{3}}
\put(35,-15){$A$}
\put(100,00){\line(0,1){20}}
\put(100,20){\circle*{3}}
\put(95,-15){$C_1$}

\put(160,00){\line(-1,3){7}}
\put(160,00){\line(1,3){7}}
\put(153,21){\circle*{3}}
\put(167,21){\circle*{3}}
\put(155,-15){$B$}

\put(220,00){\line(0,1){20}}
\put(220,20){\circle*{3}}
\put(215,-15){$C_2$}
\put(235,-15){$e_2$}

\multiput(280,00)(20,0){9}{\circle*{3}}
\put(280,00){\textcolor{black}{\line(1,0){160}}}

\put(340,00){\line(0,1){60}}
\put(340,20){\circle*{3}}
\put(340,40){\circle*{3}}
\put(340,60){\circle*{3}}
\put(335,-15){$X$}

\put(420,00){\line(0,1){20}}
\put(420,20){\circle*{3}}
\put(415,-15){$C$}

\put(400,00){\line(-1,3){7}}
\put(400,00){\line(1,3){7}}
\put(393,21){\circle*{3}}
\put(407,21){\circle*{3}}
\put(395,-15){$B$}

\end{picture}
\caption{A diametrical tree of diameter 12 and a non-diametrical tree of diameter 8} \label{Diamexamples}
\end{figure}
\end{example}

We note that Theorem \ref{thm:MainDiametrical} generalizes the following result proved by Dunbar et al. in 2006.

\begin{lemma}[\cite{DEHHH05}, Theorem 5]
If $G$ is a graph of size $m$ (containing $m$ edges), then
$\Gamma_b(G) \leq m$ with equality if and only if $G$ is a
nontrivial star or path.
\end{lemma}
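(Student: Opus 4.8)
The plan is to prove the inequality by a charging argument and then treat the equality case separately, assuming throughout that $G$ is connected so that eccentricities and $\diam{G}$ are finite. Let $g$ be an arbitrary minimal dominating broadcast on $G$. By~\cite[Theorem 3]{DEHHH05}, every broadcasting vertex $v \in V_g^+$ has a private $g$-neighbor $v_p$ with either $g(v) = d(v,v_p)$, or $g(v)=1$ and $v=v_p$. Exactly as in the proof of Theorem~\ref{thm:CycleGammab}, I assign to each $v$ a path $\varepsilon(v)$: a $v$--$v_p$ geodesic of length $g(v)$ in the first case, or a single edge incident to $v$ in the second, so that $|\varepsilon(v)| \geq g(v)$ and hence $\cost{g} \leq \sum_{v \in V_g^+}|\varepsilon(v)|$. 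The entire upper bound then rests on the claim that the paths $\{\varepsilon(v)\}$ are pairwise edge-disjoint; granting this, $\cost{g} \leq \sum_{v}|\varepsilon(v)| \leq |E(G)| = m$, and maximizing over all minimal dominating broadcasts gives $\Gamma_b(G) \leq m$.

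The key step is the edge-disjointness claim, which I would prove by contradiction. Suppose distinct $u,v \in V_g^+$ have geodesics sharing an edge $e=xy$. Since $\varepsilon(u)$ and $\varepsilon(v)$ are shortest paths terminating at the private neighbors $u_p$ and $v_p$, I would compare the positions of $x$ and $y$ along the two geodesics and apply the triangle inequality to show that one private neighbor, say $v_p$, satisfies $d(u,v_p) \leq g(u)$; that is, $v_p$ is heard by $u$ as well, contradicting that $v_p$ is private to $v$. The cases (i) and (ii) for each of the four endpoints must be checked, but in every configuration a shared edge lets one broadcasting vertex reach far enough into the other's geodesic to cover its private neighbor. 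This is precisely the disjointness that is asserted without proof in the proof of Theorem~\ref{thm:CycleGammab}, so establishing it here finishes the bound.

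For the equality statement, sufficiency is a direct construction. On a path $P_{m+1}$ the single broadcast $f(v) = m = \diam{P_{m+1}}$ at an endpoint is minimal (its private neighbor is the opposite endpoint) and has cost $m$; on a star $K_{1,m}$ the broadcast assigning $1$ to every leaf is minimal (each leaf is its own private neighbor) and also has cost $m$. Combined with $\Gamma_b \leq m$, each yields $\Gamma_b = m$. For necessity I would argue the contrapositive: if a connected graph $G$ is neither a nontrivial path nor a star, then every minimal dominating broadcast has $\cost{g} < m$, i.e. the family $\{\varepsilon(v)\}$ can never partition all of $E(G)$. I would first reduce to trees by showing that a witness of cost $m$ forces $G$ to be acyclic, and then use that a tree which is neither a path nor a star has a vertex $b$ of degree $\geq 3$ lying on a path of length $\geq 3$. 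At such a $b$, any partition of the incident edges into private-neighbor geodesics produces a broadcasting vertex one of whose neighbors is already dominated by a second broadcasting vertex, so decreasing the first still dominates $G$, contradicting minimality; hence some edge is left uncovered and $\cost{g} < m$.

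I expect the main obstacle to be this equality-necessity direction. The upper bound and the sufficiency constructions are routine once edge-disjointness is in hand, and edge-disjointness itself is a finite case check. But excluding cost $m$ for every connected graph that is neither a path nor a star requires controlling all minimal dominating broadcasts simultaneously; the delicate points are establishing acyclicity (so the argument can be carried out on a tree) and making the local redundancy at a degree-$\geq 3$ vertex precise, so that covering the extra branch edges always sacrifices a private neighbor somewhere. Formalizing this redundancy — perhaps by induction on the number of vertices of degree $\geq 3$, pruning a branch and tracking the resulting change in $m$ and in $\Gamma_b$ — is where the real work lies.
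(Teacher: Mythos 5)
You should first know that the paper does not prove this statement at all: it is quoted from Dunbar et al.\ \cite{DEHHH05} (their Theorem 5) purely as a point of comparison for Theorem~\ref{thm:MainDiametrical}, so there is no in-paper proof to measure yours against. The closest the paper comes is re-using the same $\varepsilon$-machinery inside the proof of Theorem~\ref{thm:CycleGammab}, and your upper-bound argument is exactly that machinery, correctly completed: the edge-disjointness you propose to verify does go through. If $\varepsilon(u)$ and $\varepsilon(v)$ share an edge $xy$, setting $a=d(u,x)$ along $\varepsilon(u)$ and $b$ the distance from $v$ to whichever of $x,y$ is nearer $v$ along $\varepsilon(v)$, the two estimates $d(u,v_p)\le a+g(v)-b$ and $d(v,u_p)\le b+g(u)-a$ (with a $-1$ correction when the orientations disagree) cannot simultaneously exceed $g(u)$ and $g(v)$, so some private neighbor is heard twice; the type-(ii) cases are immediate since a shared edge places the self-private vertex on the other geodesic. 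The sufficiency constructions for $P_{m+1}$ and $K_{1,m}$ are also correct.

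The genuine gap is the necessity direction of the equality case, and it is not a cosmetic one. Two steps are announced but never argued, and neither is routine. First, the reduction to trees: you claim a minimal dominating broadcast of cost $m$ forces $G$ to be acyclic, but the disjoint-geodesic decomposition of $E(G)$ does not obviously preclude covering the edges of a cycle by several private-neighbor geodesics, so this is a lemma requiring its own proof, not a reduction you get for free. Second, the local argument at a degree-$\ge 3$ vertex $b$ is stated as ``a neighbor is already dominated by a second broadcasting vertex, so decreasing the first still dominates $G$'' --- but that is not the minimality criterion. By \cite[Theorem 3]{DEHHH05}, a broadcast fails to be minimal only when some broadcasting vertex is left with no private neighbor at distance exactly $f(v)$ (or, for $f(v)=1$, is no longer its own private neighbor), and you have not shown that \emph{every} way of distributing the $\ge 3$ edges at $b$ among geodesics --- including configurations where $b$ itself broadcasts with strength $1$, or where $b$ is an interior vertex of one geodesic and the endpoint of another --- destroys some private neighbor. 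Until both of these are supplied, the characterization of equality is not established; the inequality $\Gamma_b(G)\le m$ and the ``if'' direction are the only parts of the lemma your proposal actually proves.
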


\subsection{A Proof of Theorem \ref{thm:MainDiametrical} }\label{sectionfourpointone}
The rest of this section is dedicated to proving Theorem \ref{thm:MainDiametrical} via a series of lemmas. 
We begin by showing that concatenating any two diametrical trees results in another diametrical tree.  
Next we show in Lemma~\ref{lemma:Lobster} that if a tree has a limb with length longer than two, then it cannot be diametrical.  This reduces the number of cases that we need to consider.
We look at the six possible limb variations on a tree when limbs longer than two are not considered.  

Three of the variations result in a tree that is non-diametrical and the remaining three variations result in trees that may be diametrical depending on the spacing between the limbs.   
Lemma~\ref{firstlemma} proves the non-diametrical nature of three of the variations.  Lemmas~\ref{lemma:3typesoflimbs} and~\ref{lemma:et} discuss the three variations that result in trees that may or may not be diametrical.  They show that when a limb of that variation is part of a tree that is otherwise diametrical, it stays diametrical.  
Next we provide a sufficient condition for identifying non-diametrical graphs $G$.  This allows us to prove that a special case 
which on first glance may seem diametrical, is in fact non diametrical.
Lastly, we prove in Lemmas~\ref{lemma:illegalspacing} and~\ref{lemma:limbdistances} the restrictions on the spacing between two limbs of the same or different varieties that result in a diametrical tree.

We begin by 
setting notation that is used for the remainder of the paper. Set $T$ to be a tree with $\diam{T}=d$ and fix a 
diametrical path $D$ in $T$. We say that a node $u \notin D$ \emph{protrudes} from $v$ if $v \in D$ is the closest vertex to $u$ 
of all vertices in $D$.  
Label the vertices in $D$ as $v_0, \ldots, v_d$.  We define a \emph{leaf} in a graph $G$ to be a degree-one vertex in $G$.
For each vertex $v_i$, label the  vertices as in the example below.  

\begin{picture}(320,80)(-20,-20)
\multiput(120,0)(20,0){3}{\circle*{3}}
\put(120,0){\textcolor{black}{\line(1,0){50}}}
\multiput(180,0)(05,0){3}{\circle*{2}}
\put(200,0){\textcolor{black}{\line(1,0){50}}}
\multiput(210,0)(20,0){3}{\circle*{3}}
\put(140,0){\textcolor{black}{\line(0,1){20}}}
\put(160,0){\textcolor{black}{\line(0,1){20}}}
\put(160,0){\line(1,2){20}}
\put(170,20){\line(-1,2){10}}
\multiput(160,20)(2,2){1}{\circle*{3}}
\multiput(140,20)(2,2){1}{\circle*{3}}
\multiput(170,20)(2,2){1}{\circle*{3}}
\multiput(180,40)(2,2){1}{\circle*{3}}
\multiput(160,40)(2,2){1}{\circle*{3}}
\put(118,-15){\footnotesize{$v_0$}}
\put(138,-15){\footnotesize{$v_1$}}
\put(158,-15){\footnotesize{$v_2$}}
\put(208,-15){\footnotesize{$v_{d-2}$}}
\put(228,-15){\footnotesize{$v_{d-1}$}}
\put(248,-15){\footnotesize{$v_d$}}
\put(128,25){\footnotesize{$v_{1,1}$}}
\put(148,25){\footnotesize{$v_{2,1}$}}
\put(178,20){\footnotesize{$v_{2,2}$}}
\put(188,40){\footnotesize{$v_{2,2,2}$}}
\put(148,47){\footnotesize{$v_{2,2,1}$}}
\end{picture}

Notice that when there exist multiple limbs of distance two from the same 
vertex on the path $D$, we add another number to 
its subscript ordering the vertices from left to right.

In our diagrams we use a box with a label $d_i$ in it to denote a diametrical subgraph of the tree containing $d_i$ edges from 
the 
diametrical path $D$.  This means that the most costly minimal broadcast on this subtree has cost $d_i$.  A demonstration of the box notation is on the left and an example using the box notation can be seen on the right.

\begin{picture}(350,40)(-90,-15)

 \put(10,-4){\textcolor{blue}{$d_1$}}

\linethickness{1pt}
\put(0,-7){{\line(1,0){30}}}
\put(0,7){{\line(1,0){30}}}
\put(0,-7){{\line(0,1){14}}}
\put(30,-7){{\line(0,1){14}}}

\put(35,2){{\line(1,0){10}}}
\put(35,-2){{\line(1,0){10}}}

\linethickness{.1pt}
\multiput(50,0)(20,0){5}{\circle*{3}}
\put(50,0){\textcolor{black}{\line(1,0){40}}}

\multiput(95,0)(5,0){3}{\circle*{1}}
\put(110,0){\textcolor{black}{\line(1,0){20}}}

\put(53,-3){\tcb{\line(0,-1){10}}}
\put(53,-13){\tcb{\line(1,0){10}}}
\put(66,-17){\tcb{$d_1$ edges}}
\put(110,-13){\tcb{\line(1,0){10}}}
\put(120,-3){\tcb{\line(0,-1){10}}}

\multiput(190,0)(20,0){2}{\circle*{3}}
\put(175,0){\textcolor{black}{\line(1,0){50}}}
 \put(160,-4){\textcolor{blue}{$d_1$}}
 \put(230,-4){\textcolor{blue}{$d_2$}}
 
\linethickness{1pt}
\put(150,-7){{\line(1,0){30}}}
\put(150,7){{\line(1,0){30}}}
\put(150,-7){{\line(0,1){14}}}
\put(180,-7){{\line(0,1){14}}}

\put(220,-7){{\line(1,0){30}}}
\put(220,7){{\line(1,0){30}}}
\put(220,-7){{\line(0,1){14}}}
\put(250,-7){{\line(0,1){14}}}

\linethickness{.1pt}
\put(190,0){\line(0,1){20}}
\put(190,20){\circle*{3}}
\put(210,0){\line(0,1){20}}
\put(210,20){\circle*{3}}
\end{picture}

 The following pair of lemmas prove that adjoining two diametrical trees
with a path of any length will always result a diametrical tree.  
Thus to show 
that a tree is diametrical, it suffices to show that 
the separate pieces of the tree are diametrical.
We say that the path $D=D_1+D_2$ is \emph{concatenated} 
from the paths $D_1$ and $D_2$ if we identify an endpoint from $D_1$ with 
an endpoint of $D_2$.

\begin{lemma} \label{lemma:concatenate}
 Let $T_1$ and $T_2$ be two diametrical trees with diametrical paths $D_1$ and $D_2$ respectively (with diameters $d_1$ and 
$d_2$).
 Then the tree $T=T_1 \cup T_2$ obtained by concatenating the paths $D_1$ and $D_2$ is diametrical.
\end{lemma}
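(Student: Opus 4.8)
The plan is to establish $\Gamma_b(T)=\diam{T}$. For the easy inequality, note that for any graph $G$, placing $f(v)=\diam{G}$ at a single vertex $v$ of maximum eccentricity gives a dominating broadcast that is minimal (lowering $f(v)$ would leave an eccentric vertex unheard), so $\Gamma_b(G)\ge \diam{G}$. Thus it suffices to identify $\diam{T}$ and to prove $\Gamma_b(T)\le \diam{T}$.

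First I would compute $\diam{T}=d_1+d_2$. Let $w$ be the vertex obtained by identifying the two chosen endpoints of $D_1$ and $D_2$. Since $w$ is an endpoint of each diametrical path, $e_{T_i}(w)=d_i$ for $i=1,2$. As $T$ is a tree, $w$ is a cut vertex, so every geodesic between $T_1\setminus\{w\}$ and $T_2\setminus\{w\}$ passes through $w$; hence $d(x,y)\le e_{T_1}(w)+e_{T_2}(w)=d_1+d_2$ for all $x,y$, while pairs lying on a common side are at distance at most $d_1$ or $d_2$. The bound is attained along the concatenated path $D_1\cup D_2$, so $\diam{T}=d_1+d_2$.

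For the upper bound I would take an arbitrary minimal dominating broadcast $g$ on $T$ and split its cost across $w$. Following the proof of Theorem~\ref{thm:CycleGammab}, I would use the private-neighbor structure of \cite[Theorem 3]{DEHHH05} to attach to each $v\in V_g^+$ an edge-disjoint geodesic $\varepsilon(v)$ running to a private neighbor, so that $\cost{g}\le\sum_{v\in V_g^+}|\varepsilon(v)|$ equals the number of edges covered by a disjoint family of paths in $T$. Cutting every such path at the cut vertex $w$ partitions the covered edges into those lying in $T_1$ and those lying in $T_2$, giving $\cost{g}\le A+B$, where $A$ and $B$ count the covered edges of $T_1$ and $T_2$ respectively. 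It then suffices to prove $A\le \Gamma_b(T_1)=d_1$ and $B\le \Gamma_b(T_2)=d_2$; by symmetry I will focus on $A$.

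To bound $A$ I would reinterpret the $T_1$-portions of these geodesics as (the geodesics of) a minimal dominating broadcast $g_1$ of $T_1$ with $\cost{g_1}\ge A$; diametricality of $T_1$ then gives $A\le\cost{g_1}\le d_1$. Portions disjoint from $w$ transfer verbatim, retaining their private neighbors inside $T_1$. The main obstacle, and what I expect to be the heart of the argument, is the portions incident to $w$: these come from geodesics that straddle the junction (a broadcasting vertex on one side whose private neighbor lies on the other), and several of them may leave $w$ along distinct edges of $T_1$. Each must be re-anchored within $T_1$ and furnished with a private neighbor, but at most one of them can use $w$ itself for this purpose, so the rest must acquire an alternative private neighbor without sacrificing the edges they contribute to $A$. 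I would resolve this by a case analysis keyed to cases (i) and (ii) of the private-neighbor characterization and to the cut-vertex property of $w$ — extending or relocating each such broadcast within $T_1$ (respecting the eccentricity bound $e_{T_1}(\cdot)$ and preserving domination of $T_1$) so that its full length is still paid for. Granting $A\le d_1$ and, symmetrically, $B\le d_2$, we obtain $\cost{g}\le d_1+d_2=\diam{T}$ for every minimal broadcast $g$, which yields $\Gamma_b(T)\le\diam{T}$ and completes the proof.
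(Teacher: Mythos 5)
Your overall route is the same as the paper's: compute $\diam{T}=d_1+d_2$, get the lower bound from a single peripheral broadcaster, and then argue that any minimal dominating broadcast of $T$ has its cost split by the junction vertex $w$ into a part chargeable to $T_1$ (at most $d_1$) and a part chargeable to $T_2$ (at most $d_2$). The paper's proof is in fact a four-line assertion of exactly this decomposition ("every broadcast defined on $T_i$ costs at most $d_i$, thus every broadcast on $T$ costs at most $d_1+d_2$"), so you are attempting to justify rigorously the very step the paper glosses over. That said, as submitted your argument has a genuine gap at its declared heart: the claim $A\le d_1$ is never proved. You correctly reduce it to re-anchoring the geodesics that straddle $w$ so that the $T_1$-portions become the geodesic system of a \emph{minimal dominating} broadcast $g_1$ of $T_1$ with $\cost{g_1}\ge A$, but "I would resolve this by a case analysis" is a promissory note. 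The re-anchoring is not automatic: the restriction of $g$ to $T_1$ need not dominate $T_1$ (vertices near $w$ may be heard only from $T_2$), a broadcaster of $T_1$ may have all of its private neighbors in $T_2$ and so lose minimality upon restriction, and the replacement broadcast at $w$ must be checked both for domination and for possessing a private neighbor at the right distance. None of this is carried out, so the inequality $\cost{g}\le d_1+d_2$ is not actually established.

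Two remarks that would help you close the gap. First, an endpoint of a diametrical path in a tree is necessarily a leaf (otherwise the path could be extended), so $w$ has degree exactly $2$ in $T$; since the geodesics $\varepsilon(v)$ are pairwise edge-disjoint, at most one of them uses the unique $T_1$-edge at $w$. Your worry that "several of them may leave $w$ along distinct edges of $T_1$" therefore cannot occur, and the deferred case analysis collapses to handling a single straddling geodesic per side: if $v\in V_g^+$ lies in $T_2$ and its geodesic enters $T_1$ by $t=g(v)-d(v,w)$ edges ending at its private neighbor $v_p$, replace it in $T_1$ by a broadcast of strength $t$ at $w$; this dominates exactly the vertices of $T_1$ that $v$ dominated, and $v_p$ remains a private neighbor at distance exactly $t$. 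Second, you still must handle the symmetric degenerate situations (a broadcaster of $T_1$ whose only private neighbors lie in $T_2$, and the case $t=0$), and verify that the modified broadcast on $T_1$ is minimal before invoking $\Gamma_b(T_1)=d_1$. Until those verifications are written down, the proof is an outline rather than a proof.
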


\begin{proof}
 The diameter of $T$ is $\diam{T} = d = d_1+d_2$.  Every broadcast defined on $T_i$ costs at most $d_i$.
 Thus every broadcast on $T$ costs at most $d_1+d_2 = d$.  Hence $\Gamma_b(T) = \diam{T}$.  
\end{proof}

The next lemma shows that connecting diametrical trees with a path results in another diametrical tree.  
\begin{lemma} \label{lemma:pathadd}
 Let $T_1$ and $T_2$ be diametrical trees with diametrical paths $D_1$ and $D_2$ respectively, and let $T$ be a tree obtained by 
concatenating the path $P_n$ with $D_1$ on one end and $D_2$ on the other end
$T$ is diametrical.
\end{lemma}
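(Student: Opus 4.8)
The plan is to derive this lemma from Lemma~\ref{lemma:concatenate} by applying it twice, once treating the connecting path $P_n$ as a diametrical ``piece'' in its own right. The essential observation is that a path is itself a diametrical tree: $P_n$ has exactly $n-1$ edges and $\diam{P_n}=n-1$, so by \cite[Theorem 5]{DEHHH05} we have $\Gamma_b(P_n)=n-1=\diam{P_n}$, which means $P_n$ is diametrical and its (unique) diametrical path is all of $P_n$. When $n=1$ the hypothesis collapses to the situation already covered by Lemma~\ref{lemma:concatenate}, so I may assume $n\geq 2$ and regard $P_n$ as a nontrivial path.

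First I would concatenate $T_1$ with $P_n$ by identifying the chosen endpoint of $D_1$ with one endpoint of $P_n$. Since both $T_1$ and $P_n$ are diametrical, Lemma~\ref{lemma:concatenate} shows that the resulting tree $T'$ is diametrical; moreover, by the diameter computation used in the proof of that lemma, the path $D'=D_1+P_n$ is a diametrical path of $T'$ of length $d_1+(n-1)$, and its one free endpoint is exactly the endpoint of $P_n$ that was not glued to $D_1$. I would then concatenate $T'$ with $T_2$ by identifying that free endpoint of $D'$ with the chosen endpoint of $D_2$. Both $T'$ and $T_2$ are diametrical, so a second application of Lemma~\ref{lemma:concatenate} yields that $T=T'\cup T_2=T_1\cup P_n\cup T_2$ is diametrical, with diametrical path $D_1+P_n+D_2$ and diameter $d_1+(n-1)+d_2$. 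This glued object is precisely the tree obtained by joining $P_n$ to $D_1$ on one end and $D_2$ on the other, which completes the argument.

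The step I expect to require the most care — and the only genuine obstacle — is the endpoint bookkeeping across the two concatenations: I must confirm that after attaching $P_n$ to $T_1$, the diametrical path $D'$ retains a true free endpoint inherited from $P_n$ that is available to attach to $D_2$, and that the three-piece union really equals $T$. This is immediate from the same fact that underlies Lemma~\ref{lemma:concatenate}, namely that an endpoint of a diametrical path has eccentricity equal to the diameter; consequently no path crossing a junction can exceed the sum of the two sub-diameters, which simultaneously certifies that $D'$ is diametrical in $T'$ and that its extremal vertex stays extremal after the second gluing. Everything else is a direct invocation of the already-established concatenation lemma.
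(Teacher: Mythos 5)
Your proposal is correct and matches the paper's proof, which is literally ``Apply Lemma~\ref{lemma:concatenate} twice''; you carry out the same two-step concatenation, gluing $T_1$ to $P_n$ and then the result to $T_2$. Your explicit justification that $P_n$ is itself diametrical (so the concatenation lemma applies to it) is a detail the paper leaves implicit, but the argument is the same.
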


\begin{proof}
Apply Lemma \ref{lemma:concatenate} twice.
\end{proof}

Our next lemma shows that diametrical trees form a subclass of lobster graphs, which is the first claim in Theorem 
\ref{thm:MainDiametrical}. 

\begin{lemma}\label{lemma:Lobster}
 Let $T$ be a tree with diametral path $D$.  
 Then $T$ is non-diametrical if it contains a vertex $v_i$ on $D$ such that a limb  of length $\ell>2$ protrudes from it.
 \end{lemma}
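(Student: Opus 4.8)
The plan is to lean on the universal bound $\Gamma_b(T)\ge \diam{T}$. The broadcast placing $f(v_0)=\diam{T}=d$ at a diametral endpoint $v_0$ dominates $T$, since $e(v_0)=d$ forces every vertex within distance $d$ of $v_0$, and it is minimal because the opposite endpoint $v_d$ is a private neighbor that becomes undominated the instant $f(v_0)$ drops below $d$. Thus proving $T$ non-diametrical amounts to exhibiting a single minimal dominating broadcast of cost at least $d+1$. Before building it, I would record the restrictions the diametral path forces on the limb. If $u$ is a leaf of the limb at distance $\ell$ from $v_i$, then $d(v_0,u)=i+\ell\le d$ and $d(v_d,u)=(d-i)+\ell\le d$, so $3\le \ell\le \min\{i,\,d-i\}$; in particular $v_i$ lies at distance at least $3$ from each end of $D$, which is exactly the slack the construction consumes.

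For the core construction I would split the diametral broadcast across the path and charge an extra toll on the limb. Set $f(v_0)=i$, $\ f(v_d)=d-i-1$, and $f(u)=\ell-1$. The two end broadcasts cover the path with no gap, since $v_0$ reaches $v_i$ while $v_d$ reaches $v_{i+1}$, and $f(u)=\ell-1$ covers the limb down to the vertex $a_1$ adjacent to $v_i$, with $v_i$ itself picked up by $v_0$. The cost is $i+(d-i-1)+(\ell-1)=d+\ell-2\ge d+1$. Minimality follows by exhibiting a private neighbor for each source: $v_i$ is heard only by $v_0$ (it is at distance $d-i>f(v_d)$ from $v_d$ and $\ell>f(u)$ from $u$), $v_{i+1}$ is heard only by $v_d$, and $a_1$ is heard only by $u$, so lowering any one of the three strengths leaves a vertex undominated. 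This already settles the lemma when $T$ consists of $D$ together with a single path-limb.

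The hard part is upgrading this to an arbitrary tree $T$, because $T$ may carry further limbs and branches that these three broadcasts need not reach, and the broadcast must stay simultaneously dominating and minimal. The key tool is the same diametral inequality applied to every limb: a limb of height $h$ protruding from $v_j$ satisfies $h\le\min\{j,\,d-j\}$, confining all side structure to lie under the path. I would use this to raise $f(v_0)$ and $f(v_d)$ by the smallest amounts needed to absorb the side limbs lying to the left and right of $v_i$, verifying that each increase only adds cost (keeping the total above $d$) while a private neighbor deep in some limb survives so that no source becomes redundant. I expect this bookkeeping to be the genuine obstacle, and it is naturally organized by first reducing, through the concatenation results in Lemmas~\ref{lemma:concatenate} and~\ref{lemma:pathadd}, to a tree in which the long limb is the only feature that can spoil diametricality, and then checking domination limb-by-limb via the height bound. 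One subtle configuration to treat carefully is a limb that branches near its tip: a single broadcast at one leaf $u$ need not reach the sibling leaves, so $f(u)=\ell-1$ should be replaced by a broadcast of strength $\ell$ at $v_i$, which covers the whole limb and still contributes strictly more than the path coverage it sacrifices. This is precisely where the threshold $\ell>2$ enters, since at $\ell=2$ the toll collapses and the cost returns to $d$.
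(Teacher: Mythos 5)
Your construction is the paper's up to shifting one unit of broadcast strength: the paper sets $f(v_0)=i-1$, $f(v_d)=d-i-1$, and $f(v)=\ell$ at the tip of the limb, obtains the same cost $d+\ell-2>d$, and stops there, while your version $f(v_0)=i$, $f(v_d)=d-i-1$, $f(u)=\ell-1$ is an equivalent redistribution certified by the same private neighbors $v_i$, $v_{i+1}$, $a_1$. The complications you single out as the hard part --- additional limbs elsewhere on $D$ that the three broadcasts might miss, and a long limb that branches near its tip --- are not addressed in the paper's proof at all, so your single-limb argument is already at least as complete as the published one, and the extra bookkeeping you sketch but do not carry out goes beyond what the paper actually does.
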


\begin{proof}
Let $L$ be a limb of length $\ell >2$ protruding from $v_i$.  Let $v$ denote the vertex on $L$ that is distance $\ell$ from
the central path $D$.  Define a broadcast $f$ on $T$ so that 
\begin{align*}
f(v_0)&= i-1\\
f(v_d)&= d-i-1\\
f(v)&=\ell.
\end{align*}
Then $\cost{f}\geq d+\ell-2>2$.  Hence $\Gamma_b(T) > d$ and $T$ is non-diametrical.
\end{proof}

The previous lemma shows that, a priori, there are only a six types of limbs that can protrude from the central path of a 
diametrical tree.  They are the six limbs of length 2 or less shown below.

\begin{picture}(350,60)(0,0)

\multiput(40,0)(20,0){1}{\circle*{3}}
\put(25,0){\textcolor{black}{\line(1,0){30}}}
\put(10,-3){\textcolor{blue}{$d_1$}}
\put(60,-3){\textcolor{blue}{$d_2$}}

\linethickness{1pt}
\put(00,-7){{\line(1,0){30}}}
\put(00,7){{\line(1,0){30}}}
\put(00,-7){{\line(0,1){14}}}
\put(30,-7){{\line(0,1){14}}}

\put(50,-7){{\line(1,0){30}}}
\put(50,7){{\line(1,0){30}}}
\put(50,-7){{\line(0,1){14}}}
\put(80,-7){{\line(0,1){14}}}

\linethickness{.1pt}
\put(40,0){\line(0,1){20}}
\put(40,20){\circle*{3}}
\put(33,41){\circle*{3}}
\put(47,41){\circle*{3}}
\put(40,20){\line(1,3){7}}
\put(40,20){\line(-1,3){7}}

\multiput(190,0)(20,0){1}{\circle*{3}}
\put(175,0){\textcolor{black}{\line(1,0){30}}}
\put(160,-3){\textcolor{blue}{$d_1$}}
\put(210,-3){\textcolor{blue}{$d_2$}}

\linethickness{1pt}
\put(150,-7){{\line(1,0){30}}}
\put(150,7){{\line(1,0){30}}}
\put(150,-7){{\line(0,1){14}}}
\put(180,-7){{\line(0,1){14}}}

\put(200,-7){{\line(1,0){30}}}
\put(200,7){{\line(1,0){30}}}
\put(200,-7){{\line(0,1){14}}}
\put(230,-7){{\line(0,1){14}}}

\linethickness{.1pt}
\put(190,0){\line(0,1){40}}
\put(190,21){\circle*{3}}
\put(190,40){\circle*{3}}
\put(197,21){\circle*{3}}
\put(190,0){\line(1,3){7}}

\multiput(340,0)(20,0){1}{\circle*{3}}
\put(325,0){\textcolor{black}{\line(1,0){30}}}
\put(310,-3){\textcolor{blue}{$d_1$}}
\put(360,-3){\textcolor{blue}{$d_2$}}

\linethickness{1pt}
\put(300,-7){{\line(1,0){30}}}
\put(300,7){{\line(1,0){30}}}
\put(300,-7){{\line(0,1){14}}}
\put(330,-7){{\line(0,1){14}}}

\put(350,-7){{\line(1,0){30}}}
\put(350,7){{\line(1,0){30}}}
\put(350,-7){{\line(0,1){14}}}
\put(380,-7){{\line(0,1){14}}}

\linethickness{.1pt}
\put(340,0){\line(0,1){21}}
\put(340,21){\circle*{3}}
\put(340,0){\line(1,3){7}}
\put(340,0){\line(-1,3){7}}
\put(347,21){\circle*{3}}
\put(333,21){\circle*{3}}
\end{picture}

\begin{picture}(350,70)(0,-10)
\multiput(40,0)(20,0){1}{\circle*{3}}
\put(25,0){\textcolor{black}{\line(1,0){30}}}
\put(10,-3){\textcolor{blue}{$d_1$}}
\put(60,-3){\textcolor{blue}{$d_2$}}

\linethickness{1pt}
\put(0,-7){{\line(1,0){30}}}
\put(0,7){{\line(1,0){30}}}
\put(0,-7){{\line(0,1){14}}}
\put(30,-7){{\line(0,1){14}}}

\put(50,-7){{\line(1,0){30}}}
\put(50,7){{\line(1,0){30}}}
\put(50,-7){{\line(0,1){14}}}
\put(80,-7){{\line(0,1){14}}}

\linethickness{.1pt}
\put(40,0){\line(0,1){40}}
\put(40,20){\circle*{3}}
\put(40,40){\circle*{3}}

\multiput(190,0)(20,0){1}{\circle*{3}}
\put(175,0){\textcolor{black}{\line(1,0){30}}}
\put(160,-3){\textcolor{blue}{$d_1$}}
\put(210,-3){\textcolor{blue}{$d_2$}}

\linethickness{1pt}
\put(150,-7){{\line(1,0){30}}}
\put(150,7){{\line(1,0){30}}}
\put(150,-7){{\line(0,1){14}}}
\put(180,-7){{\line(0,1){14}}}

\put(200,-7){{\line(1,0){30}}}
\put(200,7){{\line(1,0){30}}}
\put(200,-7){{\line(0,1){14}}}
\put(230,-7){{\line(0,1){14}}}

\linethickness{.1pt}
\put(190,0){\line(1,3){7}}
\put(190,0){\line(-1,3){7}}
\put(183,21){\circle*{3}}
\put(197,21){\circle*{3}}

\multiput(340,0)(20,0){1}{\circle*{3}}
\put(325,0){\textcolor{black}{\line(1,0){30}}}
\put(310,-3){\textcolor{blue}{$d_1$}}
\put(360,-3){\textcolor{blue}{$d_2$}}

\linethickness{1pt}
\put(300,-7){{\line(1,0){30}}}
\put(300,7){{\line(1,0){30}}}
\put(300,-7){{\line(0,1){14}}}
\put(330,-7){{\line(0,1){14}}}

\put(350,-7){{\line(1,0){30}}}
\put(350,7){{\line(1,0){30}}}
\put(350,-7){{\line(0,1){14}}}
\put(380,-7){{\line(0,1){14}}}

\linethickness{.1pt}
\put(340,0){\line(0,1){20}}
\put(340,20){\circle*{3}}

\end{picture}

The next lemma shows that the top three limbs depicted above (which each contain 3 edges) are not allowed in a diametrical 
tree.

\begin{lemma}\label{firstlemma}
 Let $T$ be a tree.  
 Then $T$ is non-diametrical if it contains any of the following conditions as a subgraph:
 \begin{enumerate}[\rm (i)]

 \item a vertex that is not part of the diametrical path $D$ which has degree greater than two;
 \item  a vertex $v_i$ on $D$ such that protruding from $v_i$ are  the vertices $v_{i,1}$, $v_{i,2}$, and 
$v_{i,j,1}$ for some $j\in\{1,2\}$; or
 \item a vertex $v_i$ on $D$ such that $v_i$ has three or more protrusions from it.
 \end{enumerate}
 \end{lemma}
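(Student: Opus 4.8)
The plan is to argue directly that each listed configuration forces $\Gamma_b(T) > \diam{T}$. Since the single broadcast $f(v_0)=\diam{T}$ placed at an end of $D$ is always dominating and minimal (its private neighbor is $v_d$), one has $\Gamma_b(T)\ge \diam{T}=d$ for every tree, so it suffices to exhibit in each case one minimal dominating broadcast of cost at least $d+1$. I would first prove this for the tree consisting of $D$ together with the single offending limb at $v_i$, where the scaffolding is uniform: spend a budget of $d-2$ on the two ends of the path by setting $f(v_0)=i-1$ and $f(v_d)=d-i-1$. These dominate $v_0,\dots,v_{i-1}$ and $v_{i+1},\dots,v_d$, are non-reducible because $v_{i-1}$ and $v_{i+1}$ are private neighbors of $v_0$ and $v_d$, and leave only $v_i$ and its limb to be dominated by broadcasting vertices placed on the limb. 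The whole game is then to extract at least three more units of cost from the limb while keeping every new broadcasting vertex equipped with a private neighbor.

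Cases (iii) and (i) admit clean constructions of this kind. In case (iii), $v_i$ has three protrusions; placing strength $1$ at the distance-one vertex of each dominates $v_i$ (three times over) and the entire limb, and each such vertex is its own private neighbor because $f(v_0)$ and $f(v_d)$ stop short of $v_i$, giving cost $(i-1)+(d-i-1)+3=d+1$. In case (i), Lemma~\ref{lemma:Lobster} forces the degree-$\ge 3$ off-path vertex $w$ to lie at distance one from $D$, so it carries two leaves $c_1,c_2$ at distance two from $v_i$; I would set $f(c_1)=2$ and $f(c_2)=1$. Here $c_1$ legitimately claims $v_i$ as a private neighbor (it is the unique vertex at distance two along that arm, and $c_2$, being itself at distance two from $v_i$ and broadcasting only strength one, never reaches $v_i$), while $c_2$ is its own private neighbor; again the cost is $d+1$.

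Case (ii) is the step I expect to be the real obstacle. Its limb is a length-two arm with tip $v_{i,j,1}$ together with a length-one arm ending at a leaf $v_{i,3-j}$ adjacent to $v_i$. The obvious imitation $f(v_{i,j,1})=2$, $f(v_{i,3-j})=1$ is \emph{not} minimal: the only vertex $v_{i,j,1}$ can use as a private neighbor is $v_i$, but the strength-one leaf broadcast already dominates $v_i$, so the strength-two broadcast could be lowered. Since any route dominating the adjacent leaf $v_{i,3-j}$ must pass through $v_i$, this clash over $v_i$ is intrinsic to case (ii) and does not occur in (i) or (iii), where the competing material sits at distance two. The resolution I would pursue is to let one end of the path absorb the conflict: increase $f(v_0)$ (or $f(v_d)$) just enough that its reach terminates at $v_{i,3-j}$, promoting that leaf to a private neighbor at distance $i+1$, and cover the length-two arm by a separate strength-one broadcast whose tip is its own private neighbor. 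One must then re-verify that the opposite end still retains a private neighbor despite the enlarged ball, and it is precisely this accounting—rather than the cost computation—that I expect to require the most care.

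Finally, because the lemma is phrased for an arbitrary tree $T$ and not merely for the caterpillar built from $D$ and one limb, the constructions must dominate the rest of $T$ as well. I would reconcile this using Lemmas~\ref{lemma:concatenate} and~\ref{lemma:pathadd}: the subtrees of $T$ lying to the left and right of $v_i$ (the boxes $d_1$ and $d_2$ in the limb figures) are dominated at cost equal to their numbers of diametrical edges by their own optimal broadcasts, so the local surplus of one unit manufactured at $v_i$ survives in a global minimal broadcast and yields $\Gamma_b(T)\ge d+1$. Throughout, the delicate ingredient is the bookkeeping of private neighbors, and the crux is case (ii), where a distance-one leaf and a distance-two arm compete to claim $v_i$.
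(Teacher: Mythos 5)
Your overall strategy --- fix a scaffold $f(v_0)$, $f(v_d)$ along $D$ and extract at least one extra unit of cost from the offending limb while preserving private neighbors --- is the same as the paper's, and your constructions for cases (ii) and (iii) coincide with the ones given there. In particular, the resolution you sketch for (ii) is exactly the paper's broadcast $f(v_0)=i+1$, $f(v_d)=d-i-1$, $f(v_{i,j,1})=1$: the leaf $v_{i,3-j}$ is private for $v_0$ at distance exactly $i+1$, the tip $v_{i,j,1}$ is its own private neighbor, and the verification you defer is only that $v_d$ itself remains a private neighbor of $v_d$, which holds because $d>i+1$ (a length-two limb forces $2\leq i\leq d-2$).

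Your case (i), however, contains a genuine error: the broadcast $f(c_1)=2$, $f(c_2)=1$ is not minimal, so its cost $d+1$ says nothing about $\Gamma_b(T)$, which is a maximum over \emph{minimal} dominating broadcasts only. The vertex $c_2$ lies at distance $2$ from $c_1$ and therefore hears $c_1$'s strength-two broadcast, so $c_2$ is not its own private neighbor as you claim; its unique neighbor $w$ is at distance $1$ from $c_1$ and is likewise heard by $c_1$. Hence every vertex of $N_f[c_2]=\{c_2,w\}$ is already dominated by $c_1$, the vertex $c_2$ has no private $f$-neighbor, and setting $f(c_2)=0$ still dominates: the broadcast is reducible. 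The repair is what the paper actually does: give \emph{both} leaves strength $1$, so that neither hears the other (they are at mutual distance $2$) and each is its own private neighbor, and cover $v_i$ from the far end of the path by taking $f(v_d)=d-i$ rather than $d-i-1$. Then $v_i$ is a private neighbor of $v_d$ at distance exactly $d-i$, $v_{i-1}$ remains private for $v_0$, and the cost is $(i-1)+(d-i)+1+1=d+1$ for a genuinely minimal dominating broadcast.
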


\begin{proof} 
For each condition, it suffices to find a broadcast $f$ such that $\cost{f}>\diam{T}$. We give one such broadcast for each of the 
three types of graphs.

First we prove (i).  Let $v_i$ be a vertex on $D$ such that the subtree protruding from $v_i$ contains a vertex $v_{i,1}$ of degree greater than 2.  Then there are at least two vertices $v_{i,1,1}$ and $v_{i,1,2}$  protruding from $v_{i,1}$.
Define a broadcast $f$ so that 
\begin{align*}
f(v_0)&=i-1\\
f(v_d)&=d-i\\
f(v_{i,1,1})&=f(v_{i,1,2})=1.
\end{align*}
Then cost$(f)\geq i-1+d-i+1+1=d+1$.
Hence $\Gamma_b(T)>d$ and $T$ is non-diametrical.

To prove (ii), define a broadcast $f$ on $T$ so that
\begin{align*}
f(v_0)&=i+1\\
f(v_d)&=d-i-1\\
f(v_{i,j,1} )&=1.
\end{align*}
Then $\cost{f}\geq d+1>d$.  Hence $\Gamma_b(T)>d$ and $T$ is non-diametrical.

Finally, to prove (iii), 
suppose that $T$ contains a node $v_i$ on $D$ such that $v_i$ has three or more protrusions from it.  Then 
there are vertices 
$v_{i,1},v_{i,2},\dots,v_{i,k}$ with $k\geq 3$.  Define a broadcast $f$ on $T$ so that
 \begin{align*}
 f(v_0)&=i-1\\
 f(v_d)&=d-i-1\\
 f(v_{i,1})& =f(v_{i,2})=\cdots=f(v_{i,k})=1.
 \end{align*}
 Then $\cost{f}\geq d+k-2>d$.  Hence $\Gamma_b(T)>d$ and $T$ is non-diametrical.
 \end{proof}
 
We now show that the limbs of types $(A), (B), $ and $(C)$ are allowed in diametrical trees.   

\begin{lemma}  \label{lemma:3typesoflimbs}
Let $T$ be a tree 
with a branch containing less than 3 edges protruding from vertex $v_i$ of $T$. Suppose that the induced 
subtrees containing $v_0, \ldots, v_{i-1}$ and $v_{i+1}, \ldots, v_d$ are both diametrical.   Then $T$ is diametrical.
\end{lemma}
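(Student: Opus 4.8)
The plan is to prove the two inequalities $\Gamma_b(T) \ge \diam{T}$ and $\Gamma_b(T) \le \diam{T}$ separately, the first being automatic. Write $d = \diam{T}$ and keep the fixed diametrical path $D = v_0 \cdots v_d$. The broadcast assigning $f(v_0) = d$ and $0$ elsewhere is a minimal dominating broadcast (it dominates everything, and reducing $f(v_0)$ leaves $v_d$ unheard), so $\Gamma_b(T) \ge d$, and everything reduces to showing that no minimal broadcast on $T$ costs more than $d$. As a backbone I would first delete the limb: let $T'$ be the tree obtained from $T$ by removing $L$, so $T'$ has diametrical path $D$ and no protrusion at $v_i$. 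Since the induced subtrees $T_L$ (on $v_0, \dots, v_{i-1}$) and $T_R$ (on $v_{i+1}, \dots, v_d$) are diametrical by hypothesis, Lemma~\ref{lemma:pathadd}, applied to join them along the path $v_{i-1}v_iv_{i+1}$, shows that $T'$ is diametrical with $\Gamma_b(T') = d$. Thus the entire content is to verify that reattaching the short limb at the \emph{interior} vertex $v_i$ cannot push the cost above $d$.

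For the upper bound, let $f$ be an arbitrary minimal dominating broadcast on $T$ and aim for $\cost{f} \le d$. I would reuse the private-neighbor machinery already invoked in the proof of Theorem~\ref{thm:CycleGammab} and taken from \cite[Theorem 3]{DEHHH05}: every broadcasting vertex $v \in V_f^+$ has a private neighbor reached along a geodesic $\varepsilon(v)$ with $|\varepsilon(v)| \ge f(v)$, and for distinct broadcasters these geodesics are edge-disjoint. Hence $\cost{f} \le \sum_{v \in V_f^+} |\varepsilon(v)|$, which equals the number of edges of $T$ covered by the edge-disjoint family $\{\varepsilon(v)\}$, and the problem becomes combinatorial: show this family covers at most $d$ edges. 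I would split the covered edges into those inside $T_L$, those inside $T_R$, the two junction edges $v_{i-1}v_i$ and $v_iv_{i+1}$, and the $\ell \le 2$ limb edges. Because a geodesic meets each side in a geodesic, the diametrical hypotheses on $T_L$ and $T_R$ cap the number of covered edges on each side (a diametrical side cannot support a private-neighbor cover charging more than its diameter), leaving only the junction-and-limb edges to control.

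The delicate point—and the reason a lemma beyond Lemmas~\ref{lemma:concatenate} and~\ref{lemma:pathadd} is needed—is that $v_i$ has degree at least three, so the pieces meeting at $v_i$ cannot all present $v_i$ as an endpoint of a diametrical path, and the concatenation lemmas simply do not apply. I would therefore analyze the junction locally, in the three cases given by the limb types $(A)$, $(B)$, $(C)$ of Figure~\ref{fig:3limbs} and according to where the broadcasters charging the limb edges and the two junction edges sit. In each case minimality forces any broadcaster that charges a limb edge to place its private neighbor inside the limb, which has at most two vertices; this incompatibility prevents the limb edges and both junction edges from being charged simultaneously, while the diametrical constraint (which forces $\ell \le \min(i, d-i)$, giving at least $\ell$ worth of path on either side of $v_i$) supplies the slack that absorbs the limb. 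Counting then shows that at least $\ell$ of the junction-and-limb edges remain uncovered, so the family covers at most $d$ edges and $\cost{f} \le d$; with the automatic lower bound this yields $\Gamma_b(T) = d$, i.e.\ $T$ is diametrical. I expect this local case analysis at the degree-$\ge 3$ vertex $v_i$—keeping the edge-disjoint private-neighbor cover consistent across the junction in each of the three limb types—to be the main obstacle.
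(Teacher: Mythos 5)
Your overall strategy---bound an arbitrary minimal broadcast $f$ by the edge-disjoint private-neighbor geodesics $\varepsilon(v)$ of \cite[Theorem 3]{DEHHH05} and then count covered edges region by region---is a legitimately different route from the paper, which instead normalizes the broadcast on the two sides to $f(v_0)=d_1$, $f(v_d)=d_2$ (on the grounds that any other broadcast there is no more costly, since the sides are diametrical) and then enumerates the finitely many minimal broadcasts on the short limb, checking each contributes at most $2$. However, your accounting contains a step that fails. You claim that in every case at least $\ell$ of the $\ell+2$ junction-and-limb edges remain uncovered, so that the sides (capped at $d_1$ and $d_2$) plus at most $2$ extra edges give $\cost{f}\le d$. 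This invariant is false. Take $T$ to be the path $v_0\cdots v_6$ with a type $(A)$ limb $v_3u_1u_2$ at $v_3$, and the minimal broadcast $f(v_0)=5$ (private neighbor $u_2$), $f(v_6)=1$ (its own private neighbor). Then $\varepsilon(v_0)$ is the geodesic $v_0v_1v_2v_3u_1u_2$, which covers one junction edge and both limb edges, so three of the four junction-and-limb edges are covered. The total is still $\le d$, but only because the right side is undercharged: the slack migrates between regions, so the separate caps you propose cannot be added. Any repair has to charge a geodesic that crosses from a side into the limb jointly against that side's budget and the limb's, which is essentially a different (and harder) bookkeeping than the one you describe.

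A second, quieter gap is the assertion that ``a diametrical side cannot support a private-neighbor cover charging more than its diameter.'' The hypothesis $\Gamma_b(T_L)=d_1$ is a statement about minimal \emph{dominating} broadcasts of $T_L$; the restriction to $T_L$ of the edge-disjoint family $\{\varepsilon(v)\}$ arising from a minimal broadcast of $T$ is just a family of edge-disjoint paths in $T_L$, not the private-neighbor structure of any minimal dominating broadcast of $T_L$ (it need not dominate $T_L$, and its paths may originate outside $T_L$). Since $T_L$ generally has more than $d_1$ edges, the cap does not follow formally and needs its own argument. To be fair, the paper's own proof makes the analogous normalization without further justification, so this second point is a shared weakness rather than one unique to your write-up; but the failed junction invariant above is specific to your decomposition and is where the proposal breaks.
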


\begin{proof}
Since the induced subtrees to the left and right of $v_i$ are both diametrical, we may assume without loss of generality that 
those two subgraphs dominated by a broadcast $f(v_0)=i-1=d_1$ and $f(v_d) = d-i-1 = d_2$ (as any other broadcast on these 
subgraphs will be less costly).   
Then there are only finitely many minimal broadcasts on the remaining branch protruding from $v_i$.  We depict the most costly of 
all broadcasts on each branch, and note that the resulting broadcast still satisfies $\cost{f} \leq \diam{T}$.

\begin{picture}(350,70)(0,-10)
\multiput(40,0)(20,0){1}{\circle*{3}}
\put(25,0){\textcolor{black}{\line(1,0){30}}}
\put(10,-3){\textcolor{blue}{$d_1$}}
\put(60,-3){\textcolor{blue}{$d_2$}}

\linethickness{1pt}
\put(0,-7){{\line(1,0){30}}}
\put(0,7){{\line(1,0){30}}}
\put(0,-7){{\line(0,1){14}}}
\put(30,-7){{\line(0,1){14}}}

\put(50,-7){{\line(1,0){30}}}
\put(50,7){{\line(1,0){30}}}
\put(50,-7){{\line(0,1){14}}}
\put(80,-7){{\line(0,1){14}}}

\linethickness{.1pt}
\put(40,0){\line(0,1){40}}
\put(40,20){\circle*{3}}
\put(40,40){\circle*{3}}
\put(37,43){\tcr{2}}

\multiput(190,0)(20,0){1}{\circle*{3}}
\put(175,0){\textcolor{black}{\line(1,0){30}}}
\put(160,-3){\textcolor{blue}{$d_1$}}
\put(210,-3){\textcolor{blue}{$d_2$}}

\linethickness{1pt}
\put(150,-7){{\line(1,0){30}}}
\put(150,7){{\line(1,0){30}}}
\put(150,-7){{\line(0,1){14}}}
\put(180,-7){{\line(0,1){14}}}

\put(200,-7){{\line(1,0){30}}}
\put(200,7){{\line(1,0){30}}}
\put(200,-7){{\line(0,1){14}}}
\put(230,-7){{\line(0,1){14}}}

\linethickness{.1pt}
\put(190,0){\line(1,3){7}}
\put(190,0){\line(-1,3){7}}
\put(183,21){\circle*{3}}
\put(197,21){\circle*{3}}
\put(194,24){\tcr{1}}
\put(180,24){\tcr{1}}

\multiput(340,0)(20,0){1}{\circle*{3}}
\put(325,0){\textcolor{black}{\line(1,0){30}}}
\put(310,-3){\textcolor{blue}{$d_1$}}
\put(360,-3){\textcolor{blue}{$d_2$}}

\linethickness{1pt}
\put(300,-7){{\line(1,0){30}}}
\put(300,7){{\line(1,0){30}}}
\put(300,-7){{\line(0,1){14}}}
\put(330,-7){{\line(0,1){14}}}

\put(350,-7){{\line(1,0){30}}}
\put(350,7){{\line(1,0){30}}}
\put(350,-7){{\line(0,1){14}}}
\put(380,-7){{\line(0,1){14}}}

\linethickness{.1pt}
\put(340,0){\line(0,1){20}}
\put(340,20){\circle*{3}}
\put(337,23){\tcr{1}}
\end{picture}

One can easily verify that each of these broadcasts is minimal and that the cost of each broadcast is at most $d$. Hence these 
broadcasts all satisfy $\cost{f}\leq \diam{T}$.  
 \end{proof}

\begin{lemma}\label{lemma:et}
Let $T$ be a diametrical tree. Let $e_1,e_2$ denote the two endpoints of a diametrical path $D$ in $T$.  
The minimum distance between a limb of type $(A), (B),$ or $(C)$ and an endpoint $e_i$ satisfies one of the following equalities:
\[ d(e_i,A) \geq 2 \quad d(e_i,B) \geq 2 \quad d(e_i,C) \geq 1 .\]
\end{lemma}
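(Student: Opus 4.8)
The plan is to prove the two easy inequalities directly from the diameter and to handle the one genuine case by the contrapositive. Fix the endpoint $e_1=v_0$ and suppose the limb in question protrudes from $v_j$, so that $d(e_1,\mathrm{limb})=j$; the bound measured from $e_2=v_d$ is symmetric by reversing the path.

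First I would dispose of types $A$ and $C$, and the degenerate case $d(e_i,B)=0$, purely by a length argument. A type-$C$ limb attached at $v_0$ would give a leaf at distance $d+1$ from $v_d$; a type-$A$ limb at $v_0$ or $v_1$ (resp. a type-$B$ limb at $v_0$) would give a vertex at distance $d+2$ or $d+1$ (resp. $d+1$) from $v_d$. Each of these contradicts $\diam{T}=d$ together with the fact that $D$ is a diametrical path. Hence automatically $d(e_i,C)\ge 1$ and $d(e_i,A)\ge 2$, and a type-$B$ limb cannot sit at the endpoint itself. This leaves exactly one substantive case: a type-$B$ limb at $v_1$, which does \emph{not} violate the diameter and so must be excluded using $\Gamma_b$.

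For that case I would construct an explicit minimal dominating broadcast of cost at least $d+1$, contradicting $\Gamma_b(T)=\diam{T}=d$. Write the two leaves of the $B$-limb as $v_{1,1},v_{1,2}$; since $v_0$ is a diametral endpoint it has degree one, so $v_1$ carries the three leaf-neighbours $v_0,v_{1,1},v_{1,2}$. Starting from the reduced tower $f(v_d)=d-2$, I set $f(v_0)=f(v_{1,1})=f(v_{1,2})=1$. Each unit tower is its own private $f$-neighbour (the other two leaves lie at distance $2$), and $v_d$ retains the private neighbour $v_2$ since every unit tower is at distance $2>1$ from $v_2$; so this piece is minimal by \cite[Theorem 3]{DEHHH05}, and its cost is already $(d-2)+3=d+1$. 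The whole mechanism is that the three mutually private unit towers triple-cover $v_1$, producing the surplus over $d$ (the small cases $d\le 2$, where the $B$-limb sits on a star, are immediate).

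The main obstacle is \textbf{domination}: a general diametrical $T$ may carry further limbs off $v_2,\dots,v_{d-1}$, and the reduced tower $f(v_d)=d-2$ fails to reach vertices at distance $d-1$ or $d$ from $v_d$. Because every limb has length at most $2$ by Lemma~\ref{lemma:Lobster}, such an uncovered vertex can only be a deep limb-vertex hanging off $v_2$ or $v_3$ (a limb off $v_j$ at depth $\delta\le 2$ is uncovered iff $\delta\ge j-1$). I would patch each uncovered vertex by placing a unit broadcast on the relevant limb-leaf: every patch vertex is its own private neighbour, lies at distance $\ge 2$ from each of $v_0,v_{1,1},v_{1,2},v_2$ (so it destroys no existing private neighbour), and only raises the cost. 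The result is a minimal dominating broadcast of cost $\ge d+1>d$, so $T$ is non-diametrical and $d(e_i,B)\ge 2$. Verifying that these patches simultaneously preserve minimality and exhaust the uncovered set is the one delicate point; the rest is the diameter bookkeeping above.
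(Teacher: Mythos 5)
Your proof is in substance the same as the paper's: limbs of types $(A)$ and $(C)$ placed too close to an endpoint, together with a $B$-limb sitting on the endpoint itself, are excluded because they would produce a vertex farther from the opposite endpoint than $\diam{T}$, and the one remaining case of a $B$-limb at $v_1$ is handled by exactly the broadcast the paper writes down, namely $f(v_0)=f(v_{1,1})=f(v_{1,2})=1$ and $f(v_d)=d-2$, of cost $d+1$.

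Your additional patching step confronts a genuine issue that the paper passes over in silence (this broadcast need not dominate limb vertices hanging off $v_2$ or $v_3$), but one of its claims is false: the leaf of a type-$B$ or type-$C$ limb attached at $v_2$ lies at distance $1$ from $v_2$, not at distance $\geq 2$, so a unit patch there covers $v_2$ and eliminates the only natural candidate for a private neighbor of $v_d$ at distance exactly $d-2$, breaking the minimality certificate for the tower at $v_d$. This is repairable --- a limb at $v_2$ is at distance $1$ from the $B$-limb at $v_1$ and so is already excluded by Lemma~\ref{lemma:illegalspacing}, or one can lower $f(v_d)$ by one and absorb the difference into the patch's cost --- but as written that step does not close. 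Since the paper's own proof never verifies domination or re-checks privacy after exhibiting the broadcast, your write-up matches, and is if anything more careful than, the argument the paper actually supplies.
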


\begin{proof}
If  $d(e_i,A) < 2$ and $d(e_i,C) < 1$, then the path from the endpoint of the limb $(A)$ or $(C)$
to the other endpoint of $D$ is longer than the diametrical path $D$.  That contradicts our assumptions about $D$.   
Next we show that every possible broadcast defined in a broadcast neighborhood of $e_i$ and $(A)$, or $e_i$ and $(C)$ are 
diametrical when $d(e_i,A) 
\geq 2$ and $d(e_i,C)  \geq 1$.

\begin{picture}(350,70)(0,-10)
\multiput(00,0)(20,0){1}{\circle*{3}}
\put(00,0){\textcolor{black}{\line(1,0){55}}}
\put(-3,5){\textcolor{red}{2}}
\put(37,42){\textcolor{red}{1}}
\put(60,-3){\textcolor{blue}{$d_2$}}

\linethickness{1pt}
\put(50,-7){{\line(1,0){30}}}
\put(50,7){{\line(1,0){30}}}
\put(50,-7){{\line(0,1){14}}}
\put(80,-7){{\line(0,1){14}}}

\linethickness{.1pt}
\put(40,0){\line(0,1){40}}
\put(40,20){\circle*{3}}
\put(40,40){\circle*{3}}
\put(20,00){\circle*{3}}
\put(40,00){\circle*{3}}

\multiput(150,0)(20,0){3}{\circle*{3}}
\put(147,5){\textcolor{red}{1}}
\put(197,25){\textcolor{red}{1}}
\put(177,25){\textcolor{red}{1}}
\put(150,0){\textcolor{black}{\line(1,0){55}}}

\put(210,-3){\textcolor{blue}{$d_2$}}

\linethickness{1pt}
\put(200,-7){{\line(1,0){30}}}
\put(200,7){{\line(1,0){30}}}
\put(200,-7){{\line(0,1){14}}}
\put(230,-7){{\line(0,1){14}}}

\linethickness{.1pt}
\put(190,0){\line(1,3){7}}
\put(190,0){\line(-1,3){7}}
\put(183,21){\circle*{3}}
\put(197,21){\circle*{3}}

\multiput(320,0)(20,0){2}{\circle*{3}}
\put(320,0){\textcolor{black}{\line(1,0){35}}}
\put(317,5){\textcolor{red}{1} }
\put(337,25){\textcolor{red}{1} }
\put(360,-3){\textcolor{blue}{$d_2$}}

\linethickness{1pt}

\put(350,-7){{\line(1,0){30}}}
\put(350,7){{\line(1,0){30}}}
\put(350,-7){{\line(0,1){14}}}
\put(380,-7){{\line(0,1){14}}}

\linethickness{.1pt}
\put(340,0){\line(0,1){20}}
\put(340,20){\circle*{3}}
\end{picture}

To see that  $d(e_i,B) \geq 2$, we show that a tree with $d(e_i,B) =1$ is not diametrical, i.e., if either vertex $v_1$ or 
$v_{d-1}$ has degree 4, the tree is not diametrical.  
We show the case where the vertex $v_1$ is of degree at least four and note that the other case follows by symmetry.  Define a 
broadcast $f$ on $T$ so that
\begin{align*}
f(v_0)&=1\\
f(v_{1,1})&=1\\
f(v_{1,2})&=1\\
f(v_d)&=d-2.
\end{align*}
Then $\cost{f}\geq d+1>d$.  Hence $\Gamma_b(T)>d$ and $T$ is non-diametrical.
\end{proof}

We next give
a sufficient condition, based on the number of vertices $|V|$ in $G$, for 
identifying non-diametrical graphs 
$G$.
\begin{lemma}\label{lemma:StrongMo}
Let $G= (V,E)$ be an arbitrary graph with diameter $\diam{G}$. Let $v_0$ be any vertex in $G$.  
Let $T$ be a breadth-first spanning tree rooted at $v_0$, and label the vertices in $T$ by the parity of their distances
from $v_0$ (0 at even distances and 1 at odd distances).  If the number of $1$'s in this labeling is greater than the diameter of 
$G$, then the graph $G$ is not diametrical. 
\end{lemma}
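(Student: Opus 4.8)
The plan is to exhibit a single dominating broadcast whose cost exceeds $\diam{G}$ and then argue that it is minimal, so that $\Gamma_b(G) > \diam{G}$ and hence $G$ is non-diametrical. First note that we may assume $\diam{G}\ge 1$, since otherwise $G$ is a single vertex and the number of $1$'s is $0$, so the hypothesis fails vacuously. Define a broadcast $f\colon V \rightarrow \{0,1,\dots,\diam{G}\}$ by setting $f(v)=1$ whenever $v$ carries the label $1$ (that is, $d(v_0,v)$ is odd) and $f(v)=0$ otherwise. I would first check that $f$ dominates $G$: every label-$1$ vertex hears itself, and every label-$0$ vertex $w\neq v_0$ hears its parent in $T$, which sits one level closer to $v_0$ and therefore carries label $1$; finally $v_0$ hears any one of its neighbors, all of which lie at level $1$. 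Since $\cost{f}$ equals the number of label-$1$ vertices, the hypothesis yields $\cost{f} > \diam{G}$ at once.

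The remaining and essential point is that this dominating broadcast (or a reduction of it of still-large cost) is \emph{minimal}, for only a minimal broadcast contributes a lower bound to $\Gamma_b(G)$. Using the private-neighbor criterion of Dunbar et al.~\cite[Theorem~3]{DEHHH05}, I would try to show that each broadcasting vertex retains a private $f$-neighbor. Working in the spanning tree $T$ this is immediate: the level-parity labeling is a proper $2$-colouring of $T$, so a label-$1$ vertex $u$ has only label-$0$ neighbors inside $T$. Consequently no other broadcasting vertex dominates $u$, the vertex $u$ is its own private neighbor, and lowering $f(u)$ from $1$ to $0$ would leave $u$ unheard. This makes $f$ minimal and gives $\Gamma_b(G)\ge \cost{f} > \diam{G}$, as desired.

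The step I expect to be the main obstacle is exactly this minimality check when one passes from the tree $T$ to the ambient graph $G$. Breadth-first search only forbids edges between levels differing by two or more, so $G$ may contain edges \emph{within} a single level; an intra-level edge joining two label-$1$ vertices $u$ and $w$ allows $w$ to dominate $u$, which breaks the ``$u$ is its own private neighbor'' argument. To handle this I would first search among the children of $u$ in $T$ for a vertex whose only broadcasting neighbor is $u$, and, when no such private neighbor exists, instead delete $u$ from $V_f^+$ and verify that the resulting broadcast still dominates $G$ while keeping cost above $\diam{G}$; iterating this reduction should produce a genuinely minimal dominating broadcast without dropping the cost down to $\diam{G}$. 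Confirming that this reduction can always be carried out in full generality is the technical heart of the proof; in the tree setting that drives the application in Section~\ref{diametrical}, no intra-level edges occur, so $f$ is already minimal and the argument closes cleanly.
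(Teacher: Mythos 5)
Your construction is the same one the paper uses: broadcast strength $1$ from every odd-level vertex of the BFS tree, observe that this dominates $G$, and conclude that $\Gamma_b(G)$ is at least the number of $1$'s. The paper's proof is a single sentence that simply asserts this broadcast is minimal; you have correctly isolated the point where that assertion needs an argument, namely that breadth-first search permits edges inside a level, so an odd-level vertex $u$ may be heard by another odd-level broadcaster and then $u$ need not be its own private neighbor. This is a genuine gap, and your proposed repair --- iteratively deleting redundant broadcasters until the broadcast becomes minimal --- cannot in general be carried out ``without dropping the cost down to $\diam{G}$.'' Take $G=K_4$ rooted at $v_0$: the other three vertices all sit at level $1$, so the broadcast assigns each of them a $1$ for a cost of $3>1=\diam{K_4}$; but any single one of them already dominates $K_4$, so every minimal dominating broadcast of $K_4$ has cost $1$ and $\Gamma_b(K_4)=1=\diam{K_4}$. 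Thus $K_4$ is diametrical even though the number of $1$'s exceeds the diameter: your reduction collapses the cost to $1$, and in fact the lemma as stated for arbitrary graphs is false. The same example defeats the paper's own one-line proof, since the three level-$1$ vertices form a dominating set of $K_4$ that is not minimal, so the implicit inequality $\Gamma(G)\geq|V_f^+|$ fails.

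Both the statement and your argument are sound in the setting in which the paper actually applies the lemma, namely when $G$ is a tree: then $T=G$, there are no intra-level edges, every odd-level vertex has only even-level neighbors and hence is its own private $f$-neighbor, and minimality follows from the private-neighbor criterion of \cite{DEHHH05} exactly as you say. If you restrict the hypothesis to trees (or, more generally, to graphs in which no two odd-level vertices of the BFS tree are adjacent in $G$), your write-up closes the gap that the paper leaves open; as stated for arbitrary graphs, neither your reduction step nor the paper's assertion of minimality can be completed.
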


\begin{proof}
The number of $1$'s in the labeling of the spanning tree $T$ is the cost of a minimal dominating broadcast of $G$ with 
$f(v) =1$ for all $v \in V^+_f$.  Hence $\Gamma_b(G) \geq \Gamma(G) > |V^+_f|$ which is equal to the number of ones.
\end{proof}

\begin{corollary}\label{cor:StrongMo}
Let $G= (V,E)$ be an arbitrary graph with diameter $\diam{G}$.
 If $\left \lceil \frac{|V|}{2} \right \rceil \geq \diam{G}$, then $G$ is non-diametrical.
\end{corollary}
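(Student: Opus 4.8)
The plan is to read Corollary~\ref{cor:StrongMo} off of Lemma~\ref{lemma:StrongMo}. That lemma declares $G$ non-diametrical as soon as \emph{some} breadth-first parity labeling contains more than $\diam{G}$ ones, so it is enough to exhibit a root $v_0$ whose labeling has at least $\lceil |V|/2\rceil$ ones. Once such a root is found, the hypothesis $\lceil |V|/2\rceil \geq \diam{G}$ supplies the needed comparison between the number of ones and the diameter, and Lemma~\ref{lemma:StrongMo} finishes the job.

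First I would observe that, for any choice of root $v_0$, the parity labeling partitions $V$ into the even-distance class $V_0$ (which contains $v_0$) and the odd-distance class $V_1$, with $|V_0|+|V_1|=|V|$. Thus the larger of these two classes has at least $\lceil |V|/2\rceil$ vertices, and since the number of ones produced by the labeling is exactly $|V_1|$, the entire point is to arrange that the \emph{odd} class is the larger one. For the trees to which this corollary is ultimately applied this is clean: a tree is bipartite, so the two parity classes are exactly the two sides of the bipartition, and moving the root to an adjacent vertex shifts every distance by exactly one, flipping every parity and interchanging $V_0$ and $V_1$. Hence, starting from an arbitrary root and shifting to a neighbor if necessary, I can always pick $v_0$ so that $|V_1|\geq |V_0|$, giving $|V_1|\geq \lceil |V|/2\rceil \geq \diam{G}$.

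With that labeling in hand I would invoke Lemma~\ref{lemma:StrongMo} to conclude that $G$ is non-diametrical, which is the assertion of the corollary.

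The step I expect to be the main obstacle is matching the \emph{strict} inequality required by Lemma~\ref{lemma:StrongMo} (strictly more than $\diam{G}$ ones) against the \emph{non-strict} hypothesis $\lceil|V|/2\rceil \geq \diam{G}$. When $\lceil|V|/2\rceil > \diam{G}$ the argument above closes at once; the boundary case $\lceil|V|/2\rceil = \diam{G}$ is the delicate one, and I would handle it by verifying that in this extremal configuration the odd-layer broadcast is still a genuinely minimal dominating broadcast whose cost exceeds the diameter, rather than merely dominating. A secondary point of care, for the fully general (non-bipartite) statement, is that the neighbor-re-rooting trick no longer flips all parities, so for arbitrary $G$ one must instead choose the root so as to place the odd class on the larger side of a suitable parity partition; isolating exactly which structural feature guarantees this is where I expect the real work to lie.
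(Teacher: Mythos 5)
The paper states this corollary without proof, so the question is whether your derivation from Lemma~\ref{lemma:StrongMo} can actually be completed, and the two issues you flag at the end are not merely ``points of care'' --- the first is fatal. Lemma~\ref{lemma:StrongMo} requires \emph{strictly} more than $\diam{G}$ ones, while your construction delivers only $|V_1|\geq\lceil |V|/2\rceil \geq \diam{G}$; in the boundary case $\lceil|V|/2\rceil=\diam{G}$ the odd-layer broadcast costs exactly $\diam{G}$, which is consistent with $G$ being diametrical and proves nothing. The repair you defer --- ``verifying that in this extremal configuration the broadcast still has cost exceeding the diameter'' --- cannot succeed, because the statement itself fails there: $P_3$ satisfies $\lceil 3/2\rceil=2=\diam{P_3}$ yet is diametrical (for a nontrivial path $\Gamma_b$ equals the number of edges, which equals the diameter), and $C_4$ and $C_5$ satisfy the hypothesis yet are proved diametrical in Corollary~\ref{corcycle} of this very paper. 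What your argument genuinely establishes is the variant with the strict hypothesis $\lceil|V|/2\rceil>\diam{G}$, and only on graphs where the odd class can be forced to be the majority class.

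That second restriction is your other gap, and it is also real. The neighbor-re-rooting trick flips every parity only in connected bipartite graphs, where the two parity classes are the sides of the bipartition; this covers trees, which is all the paper needs, but for general $G$ no root need place $\lceil|V|/2\rceil$ vertices at odd distance --- in $C_5$ every root yields exactly two odd-distance vertices out of five, so the ``suitable parity partition'' you hope to find does not exist. Worse, outside the bipartite setting the odd layer of a BFS tree need not be a \emph{minimal} dominating set (in $K_3$ the two distance-one vertices dominate each other, and either one alone suffices), so Lemma~\ref{lemma:StrongMo} itself cannot be invoked as stated for arbitrary $G$. In a bipartite graph each odd-class vertex has no neighbor in the odd class and is therefore its own private neighbor, which is the fact that makes the count a lower bound for $\Gamma(G)$; you should say this explicitly rather than inherit it silently from the lemma. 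In short: your route is the intended one, but as written it proves a strictly weaker statement (strict inequality, bipartite $G$) and the full corollary is false at the boundary.
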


 \begin{example}
The following two trees both have $\Gamma_b(T)= \diam{T}+1$. Applying Lemma~\ref{lemma:StrongMo} shows that neither tree is 
diametrical.  It should be noted here that the spacing between the limbs for the first graph below satisfies the restrictions of Theorem~\ref{lemma:illegalspacing}.
It appears that both graphs satisfy the conditions given in Lemma~\ref{lemma:et} for a diametrical tree, however, upon closer scrutiny it is clear that a point is not diametrical and we see Lemma~\ref{lemma:et} does not apply.  That is, the graph on the right is non-diametrical and hence the the tree on the left is non-diametrical as well.
 
  \begin{picture}(350,50)
\multiput(0,0)(20,0){7}{\circle*{3}}
\put(0,0){\textcolor{black}{\line(1,0){120}}}
 \put(-10,0){\textcolor{blue}{1}}
 \put(17,24){\textcolor{blue}{1}}
  \put(37,4){\textcolor{blue}{1}}
\put(57,24){\textcolor{blue}{1}}
\put(77,4){\textcolor{blue}{1}}
\put(97,24){\textcolor{blue}{1}}
\put(117,4){\textcolor{blue}{1}}
\put(20,0){\line(0,1){20}}
\put(60,0){\line(0,1){20}}
\put(100,0){\line(0,1){20}}
\multiput(20,0)(0,20){2}{\circle*{3}}
\multiput(60,0)(0,20){2}{\circle*{3}}
\multiput(100,0)(0,20){2}{\circle*{3}}

\multiput(200,0)(20,0){3}{\circle*{3}}
\put(200,0){\textcolor{black}{\line(1,0){40}}}
 \put(190,0){\textcolor{blue}{1}}
 \put(217,24){\textcolor{blue}{1}}
  \put(242,0){\textcolor{blue}{1}}
\put(220,0){\line(0,1){20}}
\multiput(220,0)(0,20){2}{\circle*{3}}

\end{picture}

 \end{example}

 We next consider how closely two legal limbs can be
 spaced on the diametrical path of a diametrical tree. 
 The list provided below shows all graphs containing two limbs which are placed too closely together to be diametrical. Note we only 
need to consider the spacing of the two legal limbs as Lemmas~\ref{lemma:Lobster} and \ref{firstlemma}  
eliminate the need to 
list 
any others.  Also we only provide cases where there are two vertices from the path $D$ that support protrusions.  
 It is assumed without loss of generality that the boxes attached to the ends of the  path shown are diametrical trees of 
diameters $d_1$ and $d_2$ respectively.  If the subgraphs shown are attached to non-diametrical trees, it is obvious that the 
resulting graph is non-diametrical.  Recall that a single vertex is non-diametrical.

\begin{lemma} \label{lemma:illegalspacing}
Let $T$ be a tree where the distance between two 
limbs satisfies one of the following inequalities. 
\begin{center}
\begin{tabular}{| c|c| c| } \hline 
$d(A,A) < 4$ & $d(A,B) < 3$ & 
$d(A,C) < 3$ \\ $d(B,B) < 3$ &
$d(B,C) < 2$ & 
$d(C,C) < 2$  \\ \hline
\end{tabular}
\end{center}
Then $T$ is not diametrical.  
\end{lemma}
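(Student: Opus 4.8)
The plan is to prove the statement directly by exhibiting, in each forbidden spacing, a single \emph{minimal} dominating broadcast $f$ on $T$ with $\cost{f}\geq d+1$, where $d=\diam{T}$. Since $\Gamma_b(T)$ is the maximum cost of a minimal broadcast, one such broadcast forces $\Gamma_b(T)>d$, so $T$ is non-diametrical. Throughout I fix the diametrical path $D=v_0\cdots v_d$, attach the two limbs at $v_i$ and $v_j$ with $i<j$, and write $s=j-i$ for their separation and $h_L,h_R\in\{1,2\}$ for their \emph{heights} (the distance from the attaching vertex to the farthest limb vertex: $2$ for type $(A)$, $1$ for types $(B)$ and $(C)$). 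As in Lemmas~\ref{lemma:Lobster} and~\ref{firstlemma}, minimality is checked with the private-neighbor criterion of~\cite[Theorem 3]{DEHHH05}: every broadcasting vertex must retain a vertex it alone hears, at distance exactly its broadcast value (or be a strength-one vertex heard by no one else).

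The primary construction broadcasts from the two ends of $D$ just far enough to capture the opposite limb tips: set $f(v_0)=i+h_L$, $f(v_d)=(d-j)+h_R$, and $f\equiv 0$ elsewhere. For the small separations at hand these two reaches jointly cover the middle segment $v_{i+1},\dots,v_{j-1}$, so $f$ dominates, and
\[ \cost{f}=\bigl(i+h_L\bigr)+\bigl((d-j)+h_R\bigr)=d-s+h_L+h_R, \]
which is at least $d+1$ exactly when $s<h_L+h_R$. This settles $d(A,A)\in\{1,2,3\}$ (here $h_L+h_R=4$), the subcases $d(A,B)=2$ and $d(A,C)=2$ ($h_L+h_R=3$), and $d(B,B)=1$, $d(B,C)=1$, $d(C,C)=1$ ($h_L+h_R=2$). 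In each, the left tip is the unique private neighbor of $v_0$ and the right tip that of $v_d$, because the opposite end's reach falls strictly short of it; verifying this non-overlap of the two tips is precisely the minimality check.

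Two spacings lie outside the count above and need a refined broadcast. First, $d(B,B)=2$: here $s=h_L+h_R=2$, so the primary broadcast costs only $d$. I exploit the second leaf of each cherry by broadcasting strength $1$ on all four pendant leaves (each its own private neighbor), strength $1$ on the single degree-three path vertex $v_{i+1}$ in the gap (its private neighbor is itself, the end broadcasts being kept short of it), and $f(v_0)=i-1$, $f(v_d)=(d-j)-1$; the gap vertex supplies the extra unit and $\cost{f}=d+1$. Second, $d(A,B)=1$: the $(A)$-tip and the $(B)$-leaves are equidistant from $v_0$, so the end-reaching broadcast loses minimality. Here I place strength $2$ on the $(A)$-tip with the attaching vertex $v_i$ as its private neighbor, strength $1$ on each of the two $(B)$-leaves, $f(v_0)=i-1$, and $f(v_d)=d-i-2$; one checks no seam vertex is doubly covered, and the two $(B)$-leaves again give $\cost{f}=d+1$.

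I expect the genuine obstacle to be the minimality bookkeeping at the \emph{seams}, and in particular the case $d(A,C)=1$. Whenever one broadcast reaches a vertex already covered by another, it can be shortened and the claimed surplus collapses back to $d$; indeed the naive ``two large broadcasts'' attempt always degenerates. In the mixed adjacent case $d(A,C)=1$ this is acute: the $(A)$-tip and the single $(C)$-leaf are equidistant from $v_0$, so the end-reaching broadcast fails minimality, while the strength-two refinement used for $d(A,B)=1$ recovers only cost $d$ because the $(C)$-limb contributes a single leaf rather than two. Establishing cost $d+1$ here therefore requires a more global argument (for instance, rerouting the diametrical path through the $(A)$-limb, or a careful accounting that forces an extra private neighbor), and this is the step I expect to carry the real content of the lemma. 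Once the radii are pinned down in every case, the remaining verifications—domination and coverage of the middle segment—are mechanical.
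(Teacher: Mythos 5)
Your primary construction (broadcasting $f(v_0)=i+h_L$ and $f(v_d)=(d-j)+h_R$ to the two limb tips) and your two refinements for $d(B,B)=2$ and $d(A,B)=1$ are correct, and they essentially reproduce the paper's argument: the paper exhibits exactly these end-to-tip broadcasts for the spacings $d(C,C)=1$, $d(B,B)=2$, $d(A,C)=2$, $d(A,A)=3$ and asserts the remaining subcases follow; you actually verify more of them explicitly than the paper does. The problem is the subcase $d(A,C)=1$, which you openly leave unproven. That is a genuine gap, and it is not a removable one: the obstruction you point to is real. When the $A$-limb sits at $v_i$ and the $C$-limb at $v_{i+1}$, the leaf $c_1$ can only be dominated either by a broadcast that also reaches $v_i$ or beyond (destroying every $A$-side private neighbor) or by $f(c_1)=1$, which then covers $v_{i+1}$ --- precisely the unique vertex at distance $d-i-1$ from $v_d$ that the right-end broadcast would need as its private neighbor. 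So the right-end broadcast is capped at $d-i-2$, and every arrangement you (or the paper) might write down totals $d$, not $d+1$.

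In fact the evidence is that this subcase is not merely hard but false as stated. Take the caterpillar with diametrical path $v_0\cdots v_8$, an $A$-limb $a_1,a_2$ at $v_3$ and a $C$-leaf $c_1$ at $v_4$ (so $d(e_1,A)=3$, $d(A,C)=1$, $d(C,e_2)=4$, $\diam{T}=8$). Using the edge-disjoint-paths bound of \cite[Theorem 5]{DEHHH05} together with a case check on the possible private neighbors (e.g.\ $f(v_0)=4$ is impossible in any minimal broadcast because neither $v_4$ nor $a_1$ can be private to $v_0$ once $c_1$ and $a_2$ are dominated), one finds that every minimal dominating broadcast has cost at most $8$; the maximum $8$ is attained by $f(v_0)=3$, $f(a_2)=f(c_1)=1$, $f(v_8)=3$. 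So this tree satisfies $\Gamma_b(T)=\diam{T}$ and appears to be diametrical even though $d(A,C)<3$. The paper's own proof never addresses $d(A,C)=1$ (its third figure treats only $d(A,C)=2$), so your instinct that "the real content of the lemma" lives in this case is correct --- but the right conclusion is that the lemma's spacing table needs to be corrected for the adjacent $A$--$C$ pair, not that a cleverer broadcast is waiting to be found.
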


\begin{proof}
To demonstrate that these graphs are not diametrical, it suffices to identify a single broadcast $f$ with $\cost{f}>\Gamma_b(G)$. We 
provide one such broadcast for each graph below.

\vspace{5mm}
\begin{picture}(350,50)(0,-15)

\multiput(40,0)(20,0){2}{\circle*{3}}
\put(25,0){\textcolor{black}{\line(1,0){50}}}
 \put(10,-3){\textcolor{blue}{$d_1$}}
 \put(80,-3){\textcolor{blue}{$d_2$}}
 \put(0,12){\tcr{$d_1+2$}}
 \put(70,12){\tcr{$d_2+2$}}
   
\linethickness{1pt}
\put(0,-7){{\line(1,0){30}}}
\put(0,7){{\line(1,0){30}}}
\put(0,-7){{\line(0,1){14}}}
\put(30,-7){{\line(0,1){14}}}

\put(70,-7){{\line(1,0){30}}}
\put(70,7){{\line(1,0){30}}}
\put(70,-7){{\line(0,1){14}}}
\put(100,-7){{\line(0,1){14}}}

\linethickness{.1pt}
\put(40,0){\line(0,1){20}}
\put(40,20){\circle*{3}}
\put(60,0){\line(0,1){20}}
\put(60,20){\circle*{3}}

\multiput(190,0)(20,0){3}{\circle*{3}}
\put(175,0){\textcolor{black}{\line(1,0){70}}}
 \put(160,-3){\textcolor{blue}{$d_1$}}
 \put(250,-3){\textcolor{blue}{$d_2$}}
  \put(150,12){\tcr{$d_1$}}
 \put(260,12){\tcr{$d_2$}}
  \put(195,24){\tcr{$1$}}
   \put(180,24){\tcr{$1$}}
    \put(235,24){\tcr{$1$}}
   \put(220,24){\tcr{$1$}}
\put(208,4){\tcr{$1$}}
  
\linethickness{1pt}
\put(150,-7){{\line(1,0){30}}}
\put(150,7){{\line(1,0){30}}}
\put(150,-7){{\line(0,1){14}}}
\put(180,-7){{\line(0,1){14}}}

\put(240,-7){{\line(1,0){30}}}
\put(240,7){{\line(1,0){30}}}
\put(240,-7){{\line(0,1){14}}}
\put(270,-7){{\line(0,1){14}}}

\linethickness{.1pt}
\put(190,0){\line(1,3){7}}
\put(197,21){\circle*{3}}
\put(190,0){\line(-1,3){7}}
\put(183,21){\circle*{3}}

\put(230,0){\line(1,3){7}}
\put(237,21){\circle*{3}}
\put(230,0){\line(-1,3){7}}
\put(223,21){\circle*{3}}

\multiput(350,0)(20,0){3}{\circle*{3}}
\put(335,0){\textcolor{black}{\line(1,0){70}}}
 \put(320,-3){\textcolor{blue}{$d_1$}}
 \put(410,-3){\textcolor{blue}{$d_2$}}
 
   \put(310,12){\tcr{$d_1+3$}}
 \put(400,12){\tcr{$d_2+2$}}
 
\linethickness{1pt}
\put(310,-7){{\line(1,0){30}}}
\put(310,7){{\line(1,0){30}}}
\put(310,-7){{\line(0,1){14}}}
\put(340,-7){{\line(0,1){14}}}

\put(400,-7){{\line(1,0){30}}}
\put(400,7){{\line(1,0){30}}}
\put(400,-7){{\line(0,1){14}}}
\put(430,-7){{\line(0,1){14}}}

\linethickness{.1pt}
\put(350,0){\line(0,1){40}}
\put(350,20){\circle*{3}}
\put(350,40){\circle*{3}}

\put(390,0){\line(0,1){20}}
\put(390,20){\circle*{3}}
\end{picture}

\begin{picture}(350,50)(0,-10)
\multiput(190,0)(20,0){4}{\circle*{3}}
\put(175,0){\textcolor{black}{\line(1,0){90}}}
 \put(160,-3){\textcolor{blue}{$d_1$}}
 \put(270,-3){\textcolor{blue}{$d_2$}}
\put(150,12){\tcr{$d_1+3$}}
 \put(260,12){\tcr{$d_2+3$}}

\linethickness{1pt}
\put(150,-7){{\line(1,0){30}}}
\put(150,7){{\line(1,0){30}}}
\put(150,-7){{\line(0,1){14}}}
\put(180,-7){{\line(0,1){14}}}

\put(260,-7){{\line(1,0){30}}}
\put(260,7){{\line(1,0){30}}}
\put(260,-7){{\line(0,1){14}}}
\put(290,-7){{\line(0,1){14}}}

\linethickness{.1pt}
\put(190,0){\line(0,1){40}}
\put(190,20){\circle*{3}}
\put(190,40){\circle*{3}}

\put(250,0){\line(0,1){40}}
\put(250,20){\circle*{3}}
\put(250,40){\circle*{3}}

\end{picture}

 \begin{itemize}
 \item ($d(C,C) <2$, $d(B,C)<2$ ): For the first graph, there is a minimal broadcast $f$ with $\cost{f}=d+1$.  Set $f(v_0)=d_1+2$ and $f(v_d)=d_2+2$. Then 
$f$ 
is minimal because $y_{d_1,1}$ is a private neighbor of $v_0$ and $y_{d_2,1}$ is a private neighbor of $v_d$. The diameter of the 
tree is $d= d_1+d_2 +3$, and the cost of $f$ is $\cost{f} = d_1+d_2+4$.  Note that this also shows that $T$ is not diametrical
if $d(B,C)<2$.
 
 \item ($d(B,B) <3$): For the second graph, we set $f(v_0) = d_1$, $f(v_d)= d_2$, and $f(y_{d_1,1})=f(y_{d_1,2}) = f(v_{d_1+1}) = f(y_{d_i+2,1}) 
= f(y_{d_i+2,2}) =1$.  This broadcast is minimal because $v_{d_1-1}$ is a private neighbor of $v_0$, $v_{d_1+4}$ is a private 
neighbor of $v_d$, and the other 5 broadcasting nodes are their own private neighbors. Hence we see $d= d_1+d_2+4$ and $\cost{f} 
= 
d_1+d_2+5$.  
 
 \item ($d(A,C) <3$, $d(A,B)<3$): For the third graph, we set $f(v_0) = d_1+3$, $f(v_d)= d_2+2$. This broadcast is minimal as $y_{d_1,1,1}$ is a private 
neighbor of $v_0$ and $y_{d_1+2,1}$ is a private neighbor of $v_d$.  So while $d= d_1+d_2+4$,we have that $\cost{f} = d_1+d_2+5$. 
Note that this also shows that when $d(A,B)<3$ the tree is not diametrical.   
  
\item ($d(A,A) <4$): Similarly, on the fourth graph, we set $f(v_0) = d_1+3$, $f(v_d)= d_2+3$. This broadcast is minimal as $y_{d_1,1,1}$ is a 
private neighbor of $v_0$ and $y_{d_1+3,1,1}$ is a private neighbor of $v_d$.  So while $d= d_1+d_2+5$,we have that $\cost{f} = 
d_1+d_2+6$.  
 \end{itemize}\end{proof}

Note that for a graph to be non-diametrical, there must be a labelling with 
overlapping broadcasts, that is, the broadcast must be inefficient.  Because the graphs involved are trees, all overlaps can be 
detected by considering the interactions between the broadcasts covering two distinct protrusions.  Thus, to determine if a tree is 
diametrical or not, we need only look at how two protrusions interact.

To accomplish this, we show the smallest distance necessary between pairs of legal limbs on a diametrical path for the resulting tree to be diametrical.    What is considered to 
be sufficient distance varies depending on the type of protrusions involved.   To prove that the resulting graphs are 
diametrical, we show that the most costly, minimal broadcast on $G$ satisfies $\cost{f}=\Gamma_b(G)$, and then argue that no 
other 
minimal broadcast is more costly.

\begin{lemma}\label{lemma:limbdistances}
For a tree $T$ containing only legal limbs protruding from $D$ to be diametrical, 
the distances between its limbs must satisfy the following inequalities.

\begin{center}
\begin{tabular}{| c|c| c| } \hline 
$d(A,A) \geq 4$ & $d(A,B)  \geq 3$ & 
$d(A,C) \geq 3$ \\ $d(B,B)  \geq 3$ &
$d(B,C) \geq 2$ & 
$d(C,C) \geq 2$  \\ \hline
\end{tabular}
\end{center}

\end{lemma}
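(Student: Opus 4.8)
The plan is to read the statement as a necessity claim and derive it as the contrapositive of Lemma~\ref{lemma:illegalspacing}, upgraded from a single pair of limbs to a tree with arbitrarily many legal limbs. I would assume $T$ contains only legal limbs and is diametrical, so $\Gamma_b(T)=\diam{T}=d$, and argue by contradiction: suppose some pair of \emph{adjacent} limbs violates its bound, i.e.\ their separation satisfies one of the six strict inequalities $d(A,A)<4$, $d(A,B)<3$, $d(A,C)<3$, $d(B,B)<3$, $d(B,C)<2$, or $d(C,C)<2$. The goal is then to exhibit a minimal broadcast on $T$ of cost exceeding $d$, contradicting diametricality.

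First I would localize the offending pair. Writing $D=v_0\cdots v_d$, let the two adjacent limbs protrude from $v_i$ and $v_j$ with $i<j$ and no limb on $v_{i+1},\dots,v_{j-1}$. The subtree $T_1$ spanned by $v_0,\dots,v_{i-1}$ together with their protrusions, and the subtree $T_2$ spanned by $v_{j+1},\dots,v_d$ together with their protrusions, play the roles of the flanking boxes of diameters $d_1$ and $d_2$, so that the induced configuration of $T_1$, $T_2$, the two limbs, and the path joining $v_i$ to $v_j$ is precisely one of the displayed pictures in Lemma~\ref{lemma:illegalspacing}. The broadcasts constructed there place their weight only on the global endpoints $v_0,v_d$, on the tips of the two limbs, and possibly on the short connecting path; since every private neighbor witnessing minimality lies on a geodesic internal to $T_1$, $T_2$, the two limbs, or that path, the same broadcast transplants verbatim onto $T$, remains minimal, and retains cost $d+1>d$.

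I would then invoke Lemma~\ref{lemma:illegalspacing} itself, which already absorbs the case distinction on the flanking subtrees. If $T_1$ and $T_2$ are diametrical, the explicit broadcast gives $\cost{f}=d+1>d$; if one of them is non-diametrical, then $T$ is non-diametrical for the reason recorded in the setup of that lemma, namely that a non-diametrical flanking piece already supports an over-budget minimal broadcast, which extends to $T$ by broadcasting from the opposite endpoint. Either way $\Gamma_b(T)>d$, the desired contradiction. Applying this to every adjacent pair forces all six $\geq$ inequalities simultaneously, which is exactly the assertion of the lemma.

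The step I expect to be the main obstacle is justifying the localization rigorously: I must confirm that examining a single adjacent pair captures all the inefficiency, so that no third limb elsewhere on $D$ can restore minimality or lower the cost of the transplanted broadcast. This is precisely the reduction announced in the remark preceding the lemma, that in a tree every cost-increasing overlap is detectable between two distinct protrusions, and making it precise reduces to verifying that the private-neighbor witnesses used in Lemma~\ref{lemma:illegalspacing} survive once the remainder of $T$ is reinstated, which they do because those witnesses are internal to the isolated configuration and therefore unaffected by vertices lying beyond $T_1$ and $T_2$.
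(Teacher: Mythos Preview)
Your reading of the statement as a pure necessity claim (``if $T$ is diametrical then the spacing bounds hold'') is literal, and your reduction to Lemma~\ref{lemma:illegalspacing} is valid for that reading: the contrapositive of Lemma~\ref{lemma:illegalspacing} already says exactly that a violation of any one of the six bounds forces $T$ to be non-diametrical, and your localization argument correctly transports those broadcasts into a tree with more limbs. So for the statement as worded, your proof is correct but adds nothing beyond restating Lemma~\ref{lemma:illegalspacing}.

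The paper, however, uses this lemma for the \emph{opposite} direction, and its proof establishes sufficiency rather than necessity: it shows that when two legal limbs are placed at exactly the minimal allowed distances (and the flanking pieces are diametrical), every minimal broadcast on the resulting tree has cost at most $\diam{T}$. The method is to first enumerate, for each limb type $(A)$, $(B)$, $(C)$, the finitely many local broadcast patterns that can occur in a minimal broadcast, and then to run through every pairwise combination of these patterns at the tight spacing, verifying case by case that the cost never exceeds the diameter. This is the content needed for the ``if'' half of Theorem~\ref{thm:MainDiametrical}; the necessity half was already finished by Lemma~\ref{lemma:illegalspacing}. So despite the ``must satisfy'' wording, the work the paper expects here is the exhaustive verification that the bounds are \emph{sharp from above}, not the observation that they are forced from below.
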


\begin{proof}
We first show that there are only a limited number of labelings 
(broadcasts) available for each type of protrusion.  In 
particular, if a single protrusion exists of length one there are only two possible 
labelings that produce a minimal 
broadcast.  If a protrusion is of degree two, there are only two possible labelings 
that produce a minimal broadcast.

\begin{picture}(350,30)(-90,0)
\multiput(50,0)(20,0){2}{\circle*{3}}
\put(35,0){\textcolor{black}{\line(1,0){50}}}
 \put(15,-4){\textcolor{blue}{$d_1$}}
 \put(90,-4){\textcolor{blue}{$d_2$}}
 
\linethickness{1pt}
\put(10,-7){{\line(1,0){30}}}
\put(10,7){{\line(1,0){30}}}
\put(10,-7){{\line(0,1){14}}}
\put(40,-7){{\line(0,1){14}}}

\put(80,-7){{\line(1,0){30}}}
\put(80,7){{\line(1,0){30}}}
\put(80,-7){{\line(0,1){14}}}
\put(110,-7){{\line(0,1){14}}}
\put(15,10){\textcolor{red}{$d_1$+2}}
\put(90,10){\textcolor{red}{$d_2$}}

\linethickness{.1pt}
\put(50,0){\line(0,1){20}}
\put(50,20){\circle*{3}}
\multiput(190,0)(20,0){2}{\circle*{3}}
\put(175,0){\textcolor{black}{\line(1,0){50}}}
 \put(160,-4){\textcolor{blue}{$d_1$}}
 \put(230,-4){\textcolor{blue}{$d_2$}}
 
\linethickness{1pt}
\put(150,-7){{\line(1,0){30}}}
\put(150,7){{\line(1,0){30}}}
\put(150,-7){{\line(0,1){14}}}
\put(180,-7){{\line(0,1){14}}}

\put(220,-7){{\line(1,0){30}}}
\put(220,7){{\line(1,0){30}}}
\put(220,-7){{\line(0,1){14}}}
\put(250,-7){{\line(0,1){14}}}

\linethickness{.1pt}
\put(190,0){\line(0,1){20}}
\put(190,20){\circle*{3}}
\put(187,23){\textcolor{red}{1}}
\put(208,4){\textcolor{red}{1}}
\put(160,10){\textcolor{red}{$d_1$}}
\put(230,10){\textcolor{red}{$d_2$}}
\end{picture}

\vspace{20mm}
\begin{picture}(350,30)(-90,0)
\multiput(45,0)(20,0){2}{\circle*{3}}
\put(30,0){\textcolor{black}{\line(1,0){50}}}
 \put(20,-4){\textcolor{blue}{$d_1$}}
 \put(85,-4){\textcolor{blue}{$d_2$}}
 
\linethickness{1pt}
\put(5,-7){{\line(1,0){30}}}
\put(5,7){{\line(1,0){30}}}
\put(5,-7){{\line(0,1){14}}}
\put(35,-7){{\line(0,1){14}}}

\put(75,-7){{\line(1,0){30}}}
\put(75,7){{\line(1,0){30}}}
\put(75,-7){{\line(0,1){14}}}
\put(105,-7){{\line(0,1){14}}}
\put(5,10){\textcolor{red}{$d_1$+2}}
\put(85,10){\textcolor{red}{$d_2$}}

\linethickness{.1pt}
\put(45,0){\line(-1,2){10}}
\put(45,0){\line(1,2){10}}
\put(35,20){\circle*{3}}
\put(55,20){\circle*{3}}
\multiput(190,0)(20,0){1}{\circle*{3}}
\put(175,0){\textcolor{black}{\line(1,0){40}}}
 \put(160,-4){\textcolor{blue}{$d_1$}}
 \put(220,-4){\textcolor{blue}{$d_2$}}
 
\linethickness{1pt}
\put(150,-7){{\line(1,0){30}}}
\put(150,7){{\line(1,0){30}}}
\put(150,-7){{\line(0,1){14}}}
\put(180,-7){{\line(0,1){14}}}

\put(210,-7){{\line(1,0){30}}}
\put(210,7){{\line(1,0){30}}}
\put(210,-7){{\line(0,1){14}}}
\put(240,-7){{\line(0,1){14}}}

\linethickness{.1pt}
\put(190,0){\line(-1,2){10}}
\put(190,0){\line(1,2){10}}
\put(180,20){\circle*{3}}
\put(200,20){\circle*{3}}
\put(176,25){\textcolor{red}{1}}
\put(196,25){\textcolor{red}{1}}
\put(160,10){\textcolor{red}{$d_1$}}
\put(220,10){\textcolor{red}{$d_2$}}
\end{picture}

\vspace{20mm}
\begin{picture}(350,30)(-90,5)
\multiput(0,0)(20,0){3}{\circle*{3}}
\put(-15,0){\textcolor{black}{\line(1,0){65}}}
 \put(-30,-4){\textcolor{blue}{$d_1$}}
 \put(55,-4){\textcolor{blue}{$d_2$}}
 
\linethickness{1pt}
\put(-40,-7){{\line(1,0){30}}}
\put(-40,7){{\line(1,0){30}}}
\put(-40,-7){{\line(0,1){14}}}
\put(-10,-7){{\line(0,1){14}}}

\put(45,-7){{\line(1,0){30}}}
\put(45,7){{\line(1,0){30}}}
\put(45,-7){{\line(0,1){14}}}
\put(75,-7){{\line(0,1){14}}}
\put(-35,10){\textcolor{red}{$d_1$+3}}
\put(55,10){\textcolor{red}{$d_2$}}

\linethickness{.1pt}
\put(0,0){\line(0,1){40}}
\multiput(0,20)(0,20){2}{\circle*{3}}
\multiput(130,0)(20,0){3}{\circle*{3}}
\put(115,0){\textcolor{black}{\line(1,0){70}}}
 \put(100,-4){\textcolor{blue}{$d_1$}}
 \put(190,-4){\textcolor{blue}{$d_2$}}
 
\linethickness{1pt}
\put(90,-7){{\line(1,0){30}}}
\put(90,7){{\line(1,0){30}}}
\put(90,-7){{\line(0,1){14}}}
\put(120,-7){{\line(0,1){14}}}

\put(180,-7){{\line(1,0){30}}}
\put(180,7){{\line(1,0){30}}}
\put(180,-7){{\line(0,1){14}}}
\put(210,-7){{\line(0,1){14}}}

\linethickness{.1pt}
\put(130,0){\line(0,1){40}}
\multiput(130,20)(0,20){2}{\circle*{3}}
\put(128,43){\textcolor{red}{2}}
\put(147,4){\textcolor{red}{1}}
\put(100,10){\textcolor{red}{$d_1$}}
\put(190,10){\textcolor{red}{$d_2$}}
\multiput(260,0)(20,0){3}{\circle*{3}}
\put(245,0){\textcolor{black}{\line(1,0){70}}}
 \put(230,-4){\textcolor{blue}{$d_1$}}
 \put(320,-4){\textcolor{blue}{$d_2$}}
 
 \linethickness{1pt}
\put(220,-7){{\line(1,0){30}}}
\put(220,7){{\line(1,0){30}}}
\put(220,-7){{\line(0,1){14}}}
\put(250,-7){{\line(0,1){14}}}

\put(310,-7){{\line(1,0){30}}}
\put(310,7){{\line(1,0){30}}}
\put(310,-7){{\line(0,1){14}}}
\put(340,-7){{\line(0,1){14}}}

\linethickness{.1pt}
\put(260,0){\line(0,1){40}}
\multiput(260,20)(0,20){2}{\circle*{3}}
\put(258,43){\textcolor{red}{1}}
\put(220,10){\textcolor{red}{$d_1+2$}}
\put(310,10){\textcolor{red}{$d_2+1$}}
\end{picture}
\vspace{5mm}

Next we consider the situations wherein a combination of two of the graphs above 
(with attached labelings) are glued together.

Note that the list of illegal subgraphs shows that the two protrusions must be at least distance one, two or three apart for the 
graph to be diametrical depending on the types of protrusions.
Below we consider all possible combinations of two legal limbs placed as close as possible together.  It is evident that the 
(broadcast) labelings are diametrical.  Furthermore, including additional edges between the two protrusions also gives a 
diametrical tree.

\begin{picture}(350,70)(0,0)
\multiput(40,0)(20,0){3}{\circle*{3}}
\put(25,0){\textcolor{black}{\line(1,0){70}}}
\put(10,-3){\textcolor{blue}{$d_1$}}
\put(100,-3){\textcolor{blue}{$d_2$}}
\put(0,12){\tcr{$d_1+2$}}
\put(90,12){\tcr{$d_2+2$}}

\linethickness{1pt}
\put(0,-7){{\line(1,0){30}}}
\put(0,7){{\line(1,0){30}}}
\put(0,-7){{\line(0,1){14}}}
\put(30,-7){{\line(0,1){14}}}

\put(90,-7){{\line(1,0){30}}}
\put(90,7){{\line(1,0){30}}}
\put(90,-7){{\line(0,1){14}}}
\put(120,-7){{\line(0,1){14}}}

\linethickness{.1pt}
\put(40,0){\line(0,1){20}}
\put(40,20){\circle*{3}}

\put(80,0){\line(0,1){20}}
\put(80,20){\circle*{3}}

\multiput(220,0)(20,0){3}{\circle*{3}}
\put(205,0){\textcolor{black}{\line(1,0){70}}}
\put(190,-3){\textcolor{blue}{$d_1$}}
\put(280,-3){\textcolor{blue}{$d_2$}}
 
\put(180,12){\tcr{$d_1+2$}}

\put(287,12){\tcr{$d_2$}}
\put(257,23){\tcr{$1$}}
\put(237,03){\tcr{$1$}}

\linethickness{1pt}
\put(180,-7){{\line(1,0){30}}}
\put(180,7){{\line(1,0){30}}}
\put(180,-7){{\line(0,1){14}}}
\put(210,-7){{\line(0,1){14}}}

\put(270,-7){{\line(1,0){30}}}
\put(270,7){{\line(1,0){30}}}
\put(270,-7){{\line(0,1){14}}}
\put(300,-7){{\line(0,1){14}}}

\linethickness{.1pt}
\put(220,0){\line(0,1){20}}
\put(220,20){\circle*{3}}

\put(260,0){\line(0,1){20}}
\put(260,20){\circle*{3}}

\multiput(370,0)(20,0){3}{\circle*{3}}
\put(355,0){\textcolor{black}{\line(1,0){70}}}
\put(340,-3){\textcolor{blue}{$d_1$}}
\put(430,-3){\textcolor{blue}{$d_2$}}
 
\put(330,12){\tcr{$d_1$}}
\put(367,23){\tcr{$1$}}

\put(437,12){\tcr{$d_2$}}
\put(407,23){\tcr{$1$}}
\put(387,03){\tcr{$1$}}

\linethickness{1pt}
\put(330,-7){{\line(1,0){30}}}
\put(330,7){{\line(1,0){30}}}
\put(330,-7){{\line(0,1){14}}}
\put(360,-7){{\line(0,1){14}}}

\put(420,-7){{\line(1,0){30}}}
\put(420,7){{\line(1,0){30}}}
\put(420,-7){{\line(0,1){14}}}
\put(450,-7){{\line(0,1){14}}}

\linethickness{.1pt}
\put(370,0){\line(0,1){20}}
\put(370,20){\circle*{3}}

\put(410,0){\line(0,1){20}}
\put(410,20){\circle*{3}}

\end{picture}

\begin{picture}(350,50)(0,0)
\multiput(40,0)(20,0){3}{\circle*{3}}
\put(25,0){\textcolor{black}{\line(1,0){70}}}
\put(10,-3){\textcolor{blue}{$d_1$}}
\put(100,-3){\textcolor{blue}{$d_2$}}
\put(0,12){\tcr{$d_1+2$}}
\put(90,12){\tcr{$d_2+2$}}

\linethickness{1pt}
\put(0,-7){{\line(1,0){30}}}
\put(0,7){{\line(1,0){30}}}
\put(0,-7){{\line(0,1){14}}}
\put(30,-7){{\line(0,1){14}}}

\put(90,-7){{\line(1,0){30}}}
\put(90,7){{\line(1,0){30}}}
\put(90,-7){{\line(0,1){14}}}
\put(120,-7){{\line(0,1){14}}}

\linethickness{.1pt}
\put(40,0){\line(-1,3){7}}
\put(33,21){\circle*{3}}
\put(40,0){\line(1,3){7}}
\put(47,21){\circle*{3}}

\put(80,0){\line(0,1){20}}
\put(80,20){\circle*{3}}

\multiput(220,0)(20,0){3}{\circle*{3}}
\put(205,0){\textcolor{black}{\line(1,0){70}}}
\put(190,-3){\textcolor{blue}{$d_1$}}
\put(280,-3){\textcolor{blue}{$d_2$}}
 
\put(180,12){\tcr{$d_1+2$}}

\put(287,12){\tcr{$d_2$}}
\put(257,23){\tcr{$1$}}
\put(237,03){\tcr{$1$}}

\linethickness{1pt}
\put(180,-7){{\line(1,0){30}}}
\put(180,7){{\line(1,0){30}}}
\put(180,-7){{\line(0,1){14}}}
\put(210,-7){{\line(0,1){14}}}

\put(270,-7){{\line(1,0){30}}}
\put(270,7){{\line(1,0){30}}}
\put(270,-7){{\line(0,1){14}}}
\put(300,-7){{\line(0,1){14}}}

\linethickness{.1pt}
\put(220,0){\line(-1,3){7}}
\put(213,21){\circle*{3}}
\put(220,0){\line(1,3){7}}
\put(227,21){\circle*{3}}

\put(260,0){\line(0,1){20}}
\put(260,20){\circle*{3}}

\end{picture}

\begin{picture}(350,50)(0,0)
\multiput(40,0)(20,0){3}{\circle*{3}}
\put(25,0){\textcolor{black}{\line(1,0){70}}}
\put(10,-3){\textcolor{blue}{$d_1$}}
\put(100,-3){\textcolor{blue}{$d_2$}}
\put(0,12){\tcr{$d_1$}}
\put(90,12){\tcr{$d_2+2$}}

\put(30,23){\tcr{$1$}}
\put(44,23){\tcr{$1$}}

\linethickness{1pt}
\put(0,-7){{\line(1,0){30}}}
\put(0,7){{\line(1,0){30}}}
\put(0,-7){{\line(0,1){14}}}
\put(30,-7){{\line(0,1){14}}}

\put(90,-7){{\line(1,0){30}}}
\put(90,7){{\line(1,0){30}}}
\put(90,-7){{\line(0,1){14}}}
\put(120,-7){{\line(0,1){14}}}

\linethickness{.1pt}
\put(40,0){\line(-1,3){7}}
\put(33,21){\circle*{3}}
\put(40,0){\line(1,3){7}}
\put(47,21){\circle*{3}}

\put(80,0){\line(0,1){20}}
\put(80,20){\circle*{3}}

\multiput(220,0)(20,0){3}{\circle*{3}}
\put(205,0){\textcolor{black}{\line(1,0){70}}}
\put(190,-3){\textcolor{blue}{$d_1$}}
\put(280,-3){\textcolor{blue}{$d_2$}}
 
\put(180,12){\tcr{$d_1$}}
\put(210,23){\tcr{$1$}}
\put(224,23){\tcr{$1$}}

\put(287,12){\tcr{$d_2$}}
\put(257,23){\tcr{$1$}}
\put(237,03){\tcr{$1$}}

\linethickness{1pt}
\put(180,-7){{\line(1,0){30}}}
\put(180,7){{\line(1,0){30}}}
\put(180,-7){{\line(0,1){14}}}
\put(210,-7){{\line(0,1){14}}}

\put(270,-7){{\line(1,0){30}}}
\put(270,7){{\line(1,0){30}}}
\put(270,-7){{\line(0,1){14}}}
\put(300,-7){{\line(0,1){14}}}

\linethickness{.1pt}
\put(220,0){\line(-1,3){7}}
\put(213,21){\circle*{3}}
\put(220,0){\line(1,3){7}}
\put(227,21){\circle*{3}}

\put(260,0){\line(0,1){20}}
\put(260,20){\circle*{3}}

\end{picture}

\begin{picture}(350,50)(0,0)
\multiput(40,0)(20,0){4}{\circle*{3}}
\put(25,0){\textcolor{black}{\line(1,0){90}}}
 \put(10,-3){\textcolor{blue}{$d_1$}}
 \put(120,-3){\textcolor{blue}{$d_2$}}
 \put(0,12){\tcr{$d_1+2$}}
 \put(110,12){\tcr{$d_2+2$}}

\linethickness{1pt}
\put(0,-7){{\line(1,0){30}}}
\put(0,7){{\line(1,0){30}}}
\put(0,-7){{\line(0,1){14}}}
\put(30,-7){{\line(0,1){14}}}

\put(110,-7){{\line(1,0){30}}}
\put(110,7){{\line(1,0){30}}}
\put(110,-7){{\line(0,1){14}}}
\put(140,-7){{\line(0,1){14}}}

\linethickness{.1pt}
\put(40,0){\line(-1,3){7}}
\put(33,21){\circle*{3}}
\put(40,0){\line(1,3){7}}
\put(47,21){\circle*{3}}

\put(100,0){\line(-1,3){7}}
\put(93,21){\circle*{3}}
\put(100,0){\line(1,3){7}}
\put(107,21){\circle*{3}}

\multiput(220,0)(20,0){4}{\circle*{3}}
\put(205,0){\textcolor{black}{\line(1,0){90}}}
 \put(190,-3){\textcolor{blue}{$d_1$}}
 \put(300,-3){\textcolor{blue}{$d_2$}}
 
 \put(180,12){\tcr{$d_1+2$}}

 \put(307,12){\tcr{$d_2$}}
 \put(270,24){\tcr{$1$}}
\put(284,24){\tcr{$1$}}
\put(257,04){\tcr{$1$}}

\linethickness{1pt}
\put(180,-7){{\line(1,0){30}}}
\put(180,7){{\line(1,0){30}}}
\put(180,-7){{\line(0,1){14}}}
\put(210,-7){{\line(0,1){14}}}

\put(290,-7){{\line(1,0){30}}}
\put(290,7){{\line(1,0){30}}}
\put(290,-7){{\line(0,1){14}}}
\put(320,-7){{\line(0,1){14}}}

\linethickness{.1pt}
\put(220,0){\line(-1,3){7}}
\put(213,21){\circle*{3}}
\put(220,0){\line(1,3){7}}
\put(227,21){\circle*{3}}

\put(280,0){\line(-1,3){7}}
\put(273,21){\circle*{3}}
\put(280,0){\line(1,3){7}}
\put(287,21){\circle*{3}}
\end{picture}

\begin{picture}(350,60)(0,0)
\multiput(40,0)(20,0){4}{\circle*{3}}
\put(25,0){\textcolor{black}{\line(1,0){90}}}
 \put(10,-3){\textcolor{blue}{$d_1$}}
 \put(120,-3){\textcolor{blue}{$d_2$}}
 \put(0,12){\tcr{$d_1+3$}}
 \put(110,12){\tcr{$d_2+2$}}

\linethickness{1pt}
\put(0,-7){{\line(1,0){30}}}
\put(0,7){{\line(1,0){30}}}
\put(0,-7){{\line(0,1){14}}}
\put(30,-7){{\line(0,1){14}}}

\put(110,-7){{\line(1,0){30}}}
\put(110,7){{\line(1,0){30}}}
\put(110,-7){{\line(0,1){14}}}
\put(140,-7){{\line(0,1){14}}}

\linethickness{.1pt}
\put(40,0){\line(0,1){40}}
\put(40,20){\circle*{3}}
\put(40,40){\circle*{3}}

\put(100,0){\line(0,1){20}}
\put(100,20){\circle*{3}}

\multiput(220,0)(20,0){4}{\circle*{3}}
\put(205,0){\textcolor{black}{\line(1,0){90}}}
 \put(190,-3){\textcolor{blue}{$d_1$}}
 \put(300,-3){\textcolor{blue}{$d_2$}}
 
 \put(180,12){\tcr{$d_1+3$}}
  
 \put(307,12){\tcr{$d_2$}}
 \put(277,23){\tcr{$1$}}
\put(257,3){\tcr{$1$}}

\linethickness{1pt}
\put(180,-7){{\line(1,0){30}}}
\put(180,7){{\line(1,0){30}}}
\put(180,-7){{\line(0,1){14}}}
\put(210,-7){{\line(0,1){14}}}

\put(290,-7){{\line(1,0){30}}}
\put(290,7){{\line(1,0){30}}}
\put(290,-7){{\line(0,1){14}}}
\put(320,-7){{\line(0,1){14}}}

\linethickness{.1pt}
\put(220,0){\line(0,1){40}}
\put(220,20){\circle*{3}}
\put(220,40){\circle*{3}}

\put(280,0){\line(0,1){20}}
\put(280,20){\circle*{3}}
\end{picture}

\begin{picture}(350,60)(0,0)
\multiput(40,0)(20,0){4}{\circle*{3}}
\put(25,0){\textcolor{black}{\line(1,0){90}}}
 \put(10,-3){\textcolor{blue}{$d_1$}}
 \put(120,-3){\textcolor{blue}{$d_2$}}
 \put(0,12){\tcr{$d_1+2$}}
 \put(110,12){\tcr{$d_2+2$}}
 \put(37,43){\tcr{$2$}}
 \put(57,03){\tcr{$1$}}

\linethickness{1pt}
\put(0,-7){{\line(1,0){30}}}
\put(0,7){{\line(1,0){30}}}
\put(0,-7){{\line(0,1){14}}}
\put(30,-7){{\line(0,1){14}}}

\put(110,-7){{\line(1,0){30}}}
\put(110,7){{\line(1,0){30}}}
\put(110,-7){{\line(0,1){14}}}
\put(140,-7){{\line(0,1){14}}}

\linethickness{.1pt}
\put(40,0){\line(0,1){40}}
\put(40,20){\circle*{3}}
\put(40,40){\circle*{3}}

\put(100,0){\line(0,1){20}}
\put(100,20){\circle*{3}}

\multiput(220,0)(20,0){4}{\circle*{3}}
\put(205,0){\textcolor{black}{\line(1,0){90}}}
 \put(190,-3){\textcolor{blue}{$d_1$}}
 \put(300,-3){\textcolor{blue}{$d_2$}}
 
 \put(180,12){\tcr{$d_1+2$}}
 \put(217,43){\tcr{$2$}}
 \put(237,03){\tcr{$1$}}
  
 \put(307,12){\tcr{$d_2$}}
 \put(277,23){\tcr{$1$}}
\put(257,3){\tcr{$1$}}

\linethickness{1pt}
\put(180,-7){{\line(1,0){30}}}
\put(180,7){{\line(1,0){30}}}
\put(180,-7){{\line(0,1){14}}}
\put(210,-7){{\line(0,1){14}}}

\put(290,-7){{\line(1,0){30}}}
\put(290,7){{\line(1,0){30}}}
\put(290,-7){{\line(0,1){14}}}
\put(320,-7){{\line(0,1){14}}}

\linethickness{.1pt}
\put(220,0){\line(0,1){40}}
\put(220,20){\circle*{3}}
\put(220,40){\circle*{3}}

\put(280,0){\line(0,1){20}}
\put(280,20){\circle*{3}}
\end{picture}

\begin{picture}(350,60)(0,0)
\multiput(40,0)(20,0){4}{\circle*{3}}
\put(25,0){\textcolor{black}{\line(1,0){90}}}
 \put(10,-3){\textcolor{blue}{$d_1$}}
 \put(120,-3){\textcolor{blue}{$d_2$}}
 \put(0,12){\tcr{$d_1+2$}}
 \put(110,12){\tcr{$d_2+2$}}
 \put(37,43){\tcr{$1$}}

\linethickness{1pt}
\put(0,-7){{\line(1,0){30}}}
\put(0,7){{\line(1,0){30}}}
\put(0,-7){{\line(0,1){14}}}
\put(30,-7){{\line(0,1){14}}}

\put(110,-7){{\line(1,0){30}}}
\put(110,7){{\line(1,0){30}}}
\put(110,-7){{\line(0,1){14}}}
\put(140,-7){{\line(0,1){14}}}

\linethickness{.1pt}
\put(40,0){\line(0,1){40}}
\put(40,20){\circle*{3}}
\put(40,40){\circle*{3}}

\put(100,0){\line(0,1){20}}
\put(100,20){\circle*{3}}

\multiput(220,0)(20,0){4}{\circle*{3}}
\put(205,0){\textcolor{black}{\line(1,0){90}}}
 \put(190,-3){\textcolor{blue}{$d_1$}}
 \put(300,-3){\textcolor{blue}{$d_2$}}
 
 \put(180,12){\tcr{$d_1+2$}}
 \put(217,43){\tcr{$1$}}

 \put(307,12){\tcr{$d_2$}}
 \put(277,23){\tcr{$1$}}
\put(257,3){\tcr{$1$}}

\linethickness{1pt}
\put(180,-7){{\line(1,0){30}}}
\put(180,7){{\line(1,0){30}}}
\put(180,-7){{\line(0,1){14}}}
\put(210,-7){{\line(0,1){14}}}

\put(290,-7){{\line(1,0){30}}}
\put(290,7){{\line(1,0){30}}}
\put(290,-7){{\line(0,1){14}}}
\put(320,-7){{\line(0,1){14}}}

\linethickness{.1pt}
\put(220,0){\line(0,1){40}}
\put(220,20){\circle*{3}}
\put(220,40){\circle*{3}}

\put(280,0){\line(0,1){20}}
\put(280,20){\circle*{3}}
\end{picture}

\begin{picture}(350,70)(0,0)
\multiput(40,0)(20,0){4}{\circle*{3}}
\put(25,0){\textcolor{black}{\line(1,0){90}}}
 \put(10,-3){\textcolor{blue}{$d_1$}}
 \put(120,-3){\textcolor{blue}{$d_2$}}
 \put(0,12){\tcr{$d_1+3$}}
 \put(110,12){\tcr{$d_2+2$}}

\linethickness{1pt}
\put(0,-7){{\line(1,0){30}}}
\put(0,7){{\line(1,0){30}}}
\put(0,-7){{\line(0,1){14}}}
\put(30,-7){{\line(0,1){14}}}

\put(110,-7){{\line(1,0){30}}}
\put(110,7){{\line(1,0){30}}}
\put(110,-7){{\line(0,1){14}}}
\put(140,-7){{\line(0,1){14}}}

\linethickness{.1pt}
\put(40,0){\line(0,1){40}}
\put(40,20){\circle*{3}}
\put(40,40){\circle*{3}}

\put(100,0){\line(-1,3){7}}
\put(93,21){\circle*{3}}
\put(100,0){\line(1,3){7}}
\put(107,21){\circle*{3}}

\multiput(220,0)(20,0){4}{\circle*{3}}
\put(205,0){\textcolor{black}{\line(1,0){90}}}
 \put(190,-3){\textcolor{blue}{$d_1$}}
 \put(300,-3){\textcolor{blue}{$d_2$}}
 
 \put(180,12){\tcr{$d_1+3$}}

 \put(307,12){\tcr{$d_2$}}
 \put(270,24){\tcr{$1$}}
\put(284,24){\tcr{$1$}}

\linethickness{1pt}
\put(180,-7){{\line(1,0){30}}}
\put(180,7){{\line(1,0){30}}}
\put(180,-7){{\line(0,1){14}}}
\put(210,-7){{\line(0,1){14}}}

\put(290,-7){{\line(1,0){30}}}
\put(290,7){{\line(1,0){30}}}
\put(290,-7){{\line(0,1){14}}}
\put(320,-7){{\line(0,1){14}}}

\linethickness{.1pt}
\put(220,0){\line(0,1){40}}
\put(220,20){\circle*{3}}
\put(220,40){\circle*{3}}

\put(280,0){\line(-1,3){7}}
\put(273,21){\circle*{3}}
\put(280,0){\line(1,3){7}}
\put(287,21){\circle*{3}}
\end{picture}

\begin{picture}(350,70)(0,0)
\multiput(40,0)(20,0){4}{\circle*{3}}
\put(25,0){\textcolor{black}{\line(1,0){90}}}
 \put(10,-3){\textcolor{blue}{$d_1$}}
 \put(120,-3){\textcolor{blue}{$d_2$}}
 \put(0,12){\tcr{$d_1$}}
 \put(110,12){\tcr{$d_2+2$}}
\put(37,43){\tcr{$2$}}
 \put(57,03){\tcr{$1$}}

\linethickness{1pt}
\put(0,-7){{\line(1,0){30}}}
\put(0,7){{\line(1,0){30}}}
\put(0,-7){{\line(0,1){14}}}
\put(30,-7){{\line(0,1){14}}}

\put(110,-7){{\line(1,0){30}}}
\put(110,7){{\line(1,0){30}}}
\put(110,-7){{\line(0,1){14}}}
\put(140,-7){{\line(0,1){14}}}

\linethickness{.1pt}
\put(40,0){\line(0,1){40}}
\put(40,20){\circle*{3}}
\put(40,40){\circle*{3}}

\put(100,0){\line(-1,3){7}}
\put(93,21){\circle*{3}}
\put(100,0){\line(1,3){7}}
\put(107,21){\circle*{3}}

\multiput(220,0)(20,0){4}{\circle*{3}}
\put(205,0){\textcolor{black}{\line(1,0){90}}}
 \put(190,-3){\textcolor{blue}{$d_1$}}
 \put(300,-3){\textcolor{blue}{$d_2$}}
 
 \put(180,12){\tcr{$d_1$}}

 \put(217,43){\tcr{$2$}}
 \put(237,03){\tcr{$1$}}

 \put(307,12){\tcr{$d_2$}}
 \put(270,24){\tcr{$1$}}
\put(284,24){\tcr{$1$}}

\linethickness{1pt}
\put(180,-7){{\line(1,0){30}}}
\put(180,7){{\line(1,0){30}}}
\put(180,-7){{\line(0,1){14}}}
\put(210,-7){{\line(0,1){14}}}

\put(290,-7){{\line(1,0){30}}}
\put(290,7){{\line(1,0){30}}}
\put(290,-7){{\line(0,1){14}}}
\put(320,-7){{\line(0,1){14}}}

\linethickness{.1pt}
\put(220,0){\line(0,1){40}}
\put(220,20){\circle*{3}}
\put(220,40){\circle*{3}}

\put(280,0){\line(-1,3){7}}
\put(273,21){\circle*{3}}
\put(280,0){\line(1,3){7}}
\put(287,21){\circle*{3}}
\end{picture}

\begin{picture}(350,70)(0,0)
\multiput(40,0)(20,0){4}{\circle*{3}}
\put(25,0){\textcolor{black}{\line(1,0){90}}}
 \put(10,-3){\textcolor{blue}{$d_1$}}
 \put(120,-3){\textcolor{blue}{$d_2$}}
 \put(0,12){\tcr{$d_1+2$}}
 \put(110,12){\tcr{$d_2+2$}}
\put(37,43){\tcr{$1$}}

\linethickness{1pt}
\put(0,-7){{\line(1,0){30}}}
\put(0,7){{\line(1,0){30}}}
\put(0,-7){{\line(0,1){14}}}
\put(30,-7){{\line(0,1){14}}}

\put(110,-7){{\line(1,0){30}}}
\put(110,7){{\line(1,0){30}}}
\put(110,-7){{\line(0,1){14}}}
\put(140,-7){{\line(0,1){14}}}

\linethickness{.1pt}
\put(40,0){\line(0,1){40}}
\put(40,20){\circle*{3}}
\put(40,40){\circle*{3}}

\put(100,0){\line(-1,3){7}}
\put(93,21){\circle*{3}}
\put(100,0){\line(1,3){7}}
\put(107,21){\circle*{3}}

\multiput(220,0)(20,0){4}{\circle*{3}}
\put(205,0){\textcolor{black}{\line(1,0){90}}}
 \put(190,-3){\textcolor{blue}{$d_1$}}
 \put(300,-3){\textcolor{blue}{$d_2$}}
 
 \put(180,12){\tcr{$d_1+2$}}
 \put(217,43){\tcr{$1$}}

 \put(307,12){\tcr{$d_2$}}
 \put(270,24){\tcr{$1$}}
\put(284,24){\tcr{$1$}}

\linethickness{1pt}
\put(180,-7){{\line(1,0){30}}}
\put(180,7){{\line(1,0){30}}}
\put(180,-7){{\line(0,1){14}}}
\put(210,-7){{\line(0,1){14}}}

\put(290,-7){{\line(1,0){30}}}
\put(290,7){{\line(1,0){30}}}
\put(290,-7){{\line(0,1){14}}}
\put(320,-7){{\line(0,1){14}}}

\linethickness{.1pt}
\put(220,0){\line(0,1){40}}
\put(220,20){\circle*{3}}
\put(220,40){\circle*{3}}

\put(280,0){\line(-1,3){7}}
\put(273,21){\circle*{3}}
\put(280,0){\line(1,3){7}}
\put(287,21){\circle*{3}}
\end{picture}

\begin{picture}(350,70)(0,0)
\multiput(40,0)(20,0){5}{\circle*{3}}
\put(25,0){\textcolor{black}{\line(1,0){110}}}
 \put(10,-3){\textcolor{blue}{$d_1$}}
 \put(140,-3){\textcolor{blue}{$d_2$}}
 \put(0,12){\tcr{$d_1+3$}}
 \put(130,12){\tcr{$d_2+3$}}

\linethickness{1pt}
\put(0,-7){{\line(1,0){30}}}
\put(0,7){{\line(1,0){30}}}
\put(0,-7){{\line(0,1){14}}}
\put(30,-7){{\line(0,1){14}}}

\put(130,-7){{\line(1,0){30}}}
\put(130,7){{\line(1,0){30}}}
\put(130,-7){{\line(0,1){14}}}
\put(160,-7){{\line(0,1){14}}}

\linethickness{.1pt}
\put(40,0){\line(0,1){40}}
\put(40,20){\circle*{3}}
\put(40,40){\circle*{3}}

\put(120,0){\line(0,1){40}}
\put(120,20){\circle*{3}}
\put(120,40){\circle*{3}}

\multiput(220,0)(20,0){5}{\circle*{3}}
\put(205,0){\textcolor{black}{\line(1,0){110}}}
 \put(190,-3){\textcolor{blue}{$d_1$}}
 \put(320,-3){\textcolor{blue}{$d_2$}}
 
 \put(180,12){\tcr{$d_1+3$}}
 \put(297,43){\tcr{$1$}}

 \put(310,12){\tcr{$d_2+2$}}

\linethickness{1pt}
\put(180,-7){{\line(1,0){30}}}
\put(180,7){{\line(1,0){30}}}
\put(180,-7){{\line(0,1){14}}}
\put(210,-7){{\line(0,1){14}}}

\put(310,-7){{\line(1,0){30}}}
\put(310,7){{\line(1,0){30}}}
\put(310,-7){{\line(0,1){14}}}
\put(340,-7){{\line(0,1){14}}}

\linethickness{.1pt}
\put(220,0){\line(0,1){40}}
\put(220,20){\circle*{3}}
\put(220,40){\circle*{3}}

\put(300,0){\line(0,1){40}}
\put(300,20){\circle*{3}}
\put(300,40){\circle*{3}}

\end{picture}

\begin{picture}(350,70)(0,0)
\multiput(40,0)(20,0){5}{\circle*{3}}
\put(25,0){\textcolor{black}{\line(1,0){110}}}
 \put(10,-3){\textcolor{blue}{$d_1$}}
 \put(140,-3){\textcolor{blue}{$d_2$}}
 \put(0,12){\tcr{$d_1+3$}}
 \put(150,12){\tcr{$d_2$}}
\put(117,43){\tcr{$2$}}
\put(97,03){\tcr{$1$}}

\linethickness{1pt}
\put(0,-7){{\line(1,0){30}}}
\put(0,7){{\line(1,0){30}}}
\put(0,-7){{\line(0,1){14}}}
\put(30,-7){{\line(0,1){14}}}

\put(130,-7){{\line(1,0){30}}}
\put(130,7){{\line(1,0){30}}}
\put(130,-7){{\line(0,1){14}}}
\put(160,-7){{\line(0,1){14}}}

\linethickness{.1pt}
\put(40,0){\line(0,1){40}}
\put(40,20){\circle*{3}}
\put(40,40){\circle*{3}}

\put(120,0){\line(0,1){40}}
\put(120,20){\circle*{3}}
\put(120,40){\circle*{3}}

\multiput(220,0)(20,0){5}{\circle*{3}}
\put(205,0){\textcolor{black}{\line(1,0){110}}}
 \put(190,-3){\textcolor{blue}{$d_1$}}
 \put(320,-3){\textcolor{blue}{$d_2$}}
 
 \put(180,12){\tcr{$d_1+2$}}
 
 \put(217,43){\tcr{$1$}}

 \put(297,43){\tcr{$1$}}

 \put(310,12){\tcr{$d_2+2$}}

\linethickness{1pt}
\put(180,-7){{\line(1,0){30}}}
\put(180,7){{\line(1,0){30}}}
\put(180,-7){{\line(0,1){14}}}
\put(210,-7){{\line(0,1){14}}}

\put(310,-7){{\line(1,0){30}}}
\put(310,7){{\line(1,0){30}}}
\put(310,-7){{\line(0,1){14}}}
\put(340,-7){{\line(0,1){14}}}

\linethickness{.1pt}
\put(220,0){\line(0,1){40}}
\put(220,20){\circle*{3}}
\put(220,40){\circle*{3}}

\put(300,0){\line(0,1){40}}
\put(300,20){\circle*{3}}
\put(300,40){\circle*{3}}

\end{picture}

\begin{picture}(350,70)(0,0)
\multiput(40,0)(20,0){5}{\circle*{3}}
\put(25,0){\textcolor{black}{\line(1,0){110}}}
 \put(10,-3){\textcolor{blue}{$d_1$}}
 \put(140,-3){\textcolor{blue}{$d_2$}}
 \put(0,12){\tcr{$d_1+2$}}
 \put(37,43){\tcr{$1$}}
 \put(150,12){\tcr{$d_2$}}
\put(117,43){\tcr{$2$}}
\put(97,03){\tcr{$1$}}

\linethickness{1pt}
\put(0,-7){{\line(1,0){30}}}
\put(0,7){{\line(1,0){30}}}
\put(0,-7){{\line(0,1){14}}}
\put(30,-7){{\line(0,1){14}}}

\put(130,-7){{\line(1,0){30}}}
\put(130,7){{\line(1,0){30}}}
\put(130,-7){{\line(0,1){14}}}
\put(160,-7){{\line(0,1){14}}}

\linethickness{.1pt}
\put(40,0){\line(0,1){40}}
\put(40,20){\circle*{3}}
\put(40,40){\circle*{3}}

\put(120,0){\line(0,1){40}}
\put(120,20){\circle*{3}}
\put(120,40){\circle*{3}}

\multiput(220,0)(20,0){5}{\circle*{3}}
\put(205,0){\textcolor{black}{\line(1,0){110}}}
 \put(190,-3){\textcolor{blue}{$d_1$}}
 \put(320,-3){\textcolor{blue}{$d_2$}}

 \put(180,12){\tcr{$d_1$}}
 \put(217,43){\tcr{$2$}}
 \put(237,03){\tcr{$1$}}
  \put(277,03){\tcr{$1$}}
 \put(297,43){\tcr{$2$}}
 \put(330,12){\tcr{$d_2$}}

\linethickness{1pt}
\put(180,-7){{\line(1,0){30}}}
\put(180,7){{\line(1,0){30}}}
\put(180,-7){{\line(0,1){14}}}
\put(210,-7){{\line(0,1){14}}}

\put(310,-7){{\line(1,0){30}}}
\put(310,7){{\line(1,0){30}}}
\put(310,-7){{\line(0,1){14}}}
\put(340,-7){{\line(0,1){14}}}

\linethickness{.1pt}
\put(220,0){\line(0,1){40}}
\put(220,20){\circle*{3}}
\put(220,40){\circle*{3}}

\put(300,0){\line(0,1){40}}
\put(300,20){\circle*{3}}
\put(300,40){\circle*{3}}

\end{picture}

\vspace{5mm}
The cases depicted above show the smallest distance between legal 
protrusions that result in diametrical broadcast.  
\end{proof}

We have shown that the only possible diametrical trees must be lobster graphs where the number of limbs is less than half the 
diameter of the tree and the distance between pairs of adjacent limbs or an endpoint satisfies the inequalities in 
Theorem~\ref{thm:MainDiametrical}.

\subsection{Diametrical Grids and Cycles}

The following result is a simple corollary to Theorem \ref{thm:CycleGammab}.
\begin{corollary}\label{corcycle}
The cycle $C_n$ is diametrical if and only if $n=3,4,$ or $5$.
\end{corollary}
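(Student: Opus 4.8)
The plan is to compare $\diam{C_n}$ directly against the formula for $\Gamma_b(C_n)$ supplied by Theorem~\ref{thm:CycleGammab}, and then solve the resulting equation $\diam{C_n}=\Gamma_b(C_n)$ within each parity class. First I would record the elementary fact that $\diam{C_n}=\floor{n/2}$, since the vertex farthest from any fixed vertex of the $n$-cycle sits at distance $\floor{n/2}$.

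Next I would split into the three cases of Theorem~\ref{thm:CycleGammab}. For $n=3$ the theorem gives $\Gamma_b(C_3)=1$, and since $\diam{C_3}=\floor{3/2}=1$, the cycle $C_3$ is diametrical; this degenerate case must be handled separately because the $n>3$ formula does not apply to it. For even $n>3$ the theorem gives $\Gamma_b(C_n)=n-2$, so diametricality requires $\floor{n/2}=n/2=n-2$, which forces $n=4$; I would then observe that $n-2>n/2$ precisely when $n>4$, so every even $n\geq 6$ satisfies $\Gamma_b(C_n)>\diam{C_n}$ and is non-diametrical. For odd $n>3$ the theorem gives $\Gamma_b(C_n)=n-3$, so diametricality requires $(n-1)/2=n-3$, which forces $n=5$; and since $n-3>(n-1)/2$ exactly when $n>5$, every odd $n\geq 7$ is non-diametrical.

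Collecting these computations, the equality $\diam{C_n}=\Gamma_b(C_n)$ holds if and only if $n\in\{3,4,5\}$, which proves both implications of the corollary simultaneously. I do not anticipate any genuine obstacle: the argument is a finite parity case-check resting entirely on Theorem~\ref{thm:CycleGammab}, and the only point demanding a moment's care is the separate treatment of $n=3$, together with the two short monotonicity observations ($n-2>n/2$ for $n>4$ and $n-3>(n-1)/2$ for $n>5$) that rule out all remaining cycles at once.
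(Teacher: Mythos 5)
Your proof is correct and follows essentially the same route as the paper: verify $n=3,4,5$ directly and then compare $\diam{C_n}=\floor{n/2}$ against the formula for $\Gamma_b(C_n)$ from Theorem~\ref{thm:CycleGammab} to exclude all larger $n$. Your explicit parity split is in fact slightly more careful than the paper's single inequality chain, which uses $n-\frac{n}{2}<n-3$ and therefore only applies verbatim for $n\geq 7$ (the case $n=6$ still needs the even-case bound $\Gamma_b(C_6)=4>3=\diam{C_6}$, which your argument covers).
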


\begin{proof}
One can easily verify that $\Gamma_b(C_3) = 1 = \diam{C_3}$ and \[ \Gamma_b(C_5) = \Gamma_b(C_4)= 2 = \diam{C_4} =\diam{C_5}.\]
Thus $C_3,C_4,$ and $C_5$ are diametrical.  To see that no other cycle is diametrical, we recall that the diameter of $C_n$ is 
$\diam{C_n} = \floor{\frac{n}{2}}$ and by applying Theorem \ref{thm:CycleGammab} we find that 
\[ \diam{C_n} =  \floor{\frac{n}{2}} 
\leq \frac{n}{2} \leq n-\frac{n}{2} < n-3 \leq \Gamma_b(C_n). \qedhere \]
\end{proof}

\begin{corollary} \label{cor:justthe2ofus}
Toroidal grids are never diametrical.
\end{corollary}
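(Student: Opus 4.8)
The plan is to compare $\Gamma_b(\TG{m}{n})$ directly against $\diam{\TG{m}{n}}$ and show that the former is always strictly larger, so equality (diametricality) is impossible. First I would record the diameter: because distances in a Cartesian product add coordinatewise, $\diam{\TG{m}{n}} = \diam{C_m} + \diam{C_n} = \floor{\frac m2} + \floor{\frac n2}$. On the other side, Theorem~\ref{theorem:TGIF} gives $\Gamma_b(\TG{m}{n}) = m\cdot\Gamma_b(C_n)$, and Theorem~\ref{thm:CycleGammab} supplies the value of $\Gamma_b(C_n)$. Thus the whole corollary reduces to the elementary inequality $m\cdot\Gamma_b(C_n) > \floor{\frac m2}+\floor{\frac n2}$ for all $3\le m\le n$.

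Next I would split into the cases appearing in Theorem~\ref{thm:CycleGammab}. When $n$ is even with $n\ge 4$ we have $\Gamma_b(C_n)=n-2$, and using $\floor{\frac m2}\le \frac m2$ it suffices to check $m(n-2) > \frac{m+n}{2}$, i.e. $m(2n-5) > n$; since $m\ge 3$ this follows from $3(2n-5)=6n-15>n$, which holds for every $n\ge 4$. When $n$ is odd with $n\ge 5$ we have $\Gamma_b(C_n)=n-3$, and it suffices to check $m(n-3) > \frac{m}{2}+\frac{n-1}{2}$, i.e. $m(2n-7)>n-1$; again $3(2n-7)=6n-21>n-1$ for all $n\ge 5$. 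These two estimates dispatch every case except $n=3$.

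The step that needs care, and the only genuine obstacle, is the boundary case $n=3$: here the displayed cycle formula would read $\Gamma_b(C_3)=n-3=0$, whereas Theorem~\ref{thm:CycleGammab} records the correct value $\Gamma_b(C_3)=1$, so the generic estimate above must be replaced by a direct check. Since $3\le m\le n=3$ forces $m=3$, one computes $\Gamma_b(\TG{3}{3})=3\cdot\Gamma_b(C_3)=3>2=\floor{\frac32}+\floor{\frac32}=\diam{\TG{3}{3}}$. Having verified the strict inequality $\Gamma_b(\TG{m}{n})>\diam{\TG{m}{n}}$ in every case, I would conclude that $\TG{m}{n}$ is never diametrical. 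A sanity check on the smallest grids ($\TG{3}{4}$, $\TG{3}{5}$, and $\TG{4}{4}$, giving $6>3$, $6>3$, and $8>4$ respectively) confirms that the inequalities are comfortably strict rather than tight.
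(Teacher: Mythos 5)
Your proof is correct. Both you and the paper reduce the corollary to the same inequality $\floor{m/2}+\floor{n/2} < m\cdot\Gamma_b(C_n)$, using $\diam{\TG{m}{n}}=\floor{m/2}+\floor{n/2}$ together with Theorems~\ref{theorem:TGIF} and~\ref{thm:CycleGammab}, but you establish it by a direct algebraic estimate while the paper runs a double induction (first on $n$ with $m=3$ fixed, then on $m$). Your route is shorter and, I think, cleaner: bounding $\floor{m/2}+\floor{n/2}$ by $\frac{m+n}{2}$ reduces everything to the linear inequalities $m(2n-5)>n$ (for $n$ even) and $m(2n-7)>n-1$ (for $n$ odd), which follow immediately from $m\geq 3$; the induction in the paper buys nothing here since the quantity being bounded is given by a closed formula, and indeed the paper's inductive hypothesis is awkward at the transition from $n=3$ (where $\Gamma_b(C_3)=1$ rather than $n-3=0$) to $n=4$. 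You were right to flag $n=3$ as the one boundary case needing a separate check, and your verification $\Gamma_b(\TG{3}{3})=3>2=\diam{\TG{3}{3}}$ matches the paper's base case. The only cosmetic caveat is that your generic even-case estimate silently assumes $n\geq 4$, which is exactly the range where $\Gamma_b(C_n)=n-2$ applies, so nothing is lost.
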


\begin{proof} 
Without loss of generality assume $3\leq m\leq n$.  Then we have $\diam{\TG{m}{n}}=\floor{m/2}+\floor{n/2}$.  By 
Theorem~\ref{theorem:TGIF} we have
$\Gamma_b(\TG{m}{n})=m\Gamma_b(C_n)$.
We begin by fixing $m=3$ and inducting on $n$.

Base Case: Assume $m=n=3$.  Then
$$\diam{\TG{3}{3}}=\floor{3/2}+\floor{3/2}=2<3=3\Gamma_b(C_3).$$

Inductive Step:  Let $m=3$ and  $n> 3$.   Assume the desired result holds for $n$.
When $n$ is even, this implies the result
$$\diam{\TG{3}{n}}=\floor{3/2}+\floor{n/2}=1+n/2< 3(n-2)$$
holds.  Then for $n+1$ we have
$$\diam{\TG{3}{n+1}}=\floor{3/2}+\floor{(n+1)/2}=1+n/2<3(n-2)=3((n+1)-3)$$
where the third step follows from the inductive hypothesis.
When $n$ is odd,this implies the result
$$\diam{\TG{3}{n}}=\floor{3/2}+\floor{n/2}=1+\floor{n/2}< 3(n-3)$$
holds.  Then for $n+1$ we have
\begin{align*}
\diam{\TG{3}{n+1}}&=\floor{3/2}+\floor{(n+1)/2}\\
&=2+\floor{n/2}\\
&<7+\floor{n/2}\\
&=1+\floor{n/2}+6\\
&<(3n-9)+6\\
&=3n-3=3(n-1)=3((n+1)-2)
\end{align*}
where the fifth step follows from the inductive hypothesis.

The above argument is the base case for induction on $m$.  We assume that
$\diam{\TG{m}{n}}<\Gamma_b(\TG{m}{n})$ for  a fixed $m$ and all $n\geq m$.
If $m$ is even, this implies the result
$$\diam{\TG{m}{n}}=\floor{m/2}+\floor{n/2}=m/2+\floor{n/2}<\begin{cases}m(n-2)\hspace{2mm}\text{ if $n$ is 
even}\\m(n-3)\hspace{2mm}\text{ if $n$ is odd}\end{cases}$$
holds.  In particular, the results holds for a fixed $n\geq m+1$.  Then for $m+1$ we have 
$$\diam{\TG{m}{n}}=m/2+\floor{n/2}<\begin{cases}m(n-2)\hspace{2mm}\text{ if $n$ is even}\\m(n-3)\hspace{2mm}\text{ if $n$ is 
odd}\end{cases}<\begin{cases}(m+1)(n-2)\hspace{2mm}\text{ if $n$ is even}\\(m+1)(n-3)\hspace{2mm}\text{ if $n$ is 
odd}\end{cases}.$$

The case where $m$ is odd is similar.
Hence the graphs $\TG{m}{n}$ are non-diametrical.
\end{proof}

\begin{corollary} \label{cor:griddygridgrid}
The only grid that is diametrical is $G_{2,2} = P_2 \square P_2$.  
\end{corollary}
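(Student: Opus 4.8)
The plan is to split into cases according to the two dimensions and, in every case other than $\grid{2}{2}$, to exhibit a minimal dominating broadcast whose cost strictly exceeds $\diam{\grid{m}{n}} = m+n-2$. Since the single-node broadcast placing $\diam{G}$ at one end of a diametrical path is always a minimal dominating broadcast, we always have $\Gamma_b(\grid{m}{n}) \geq \diam{\grid{m}{n}}$; hence producing a minimal broadcast of cost $> \diam{\grid{m}{n}}$ is exactly what certifies non-diametricality. First I would dispose of the base case $\grid{2}{2} = C_4$, which is diametrical by Corollary~\ref{corcycle}. I would also note that the statement concerns genuine two-dimensional grids, so I take $2 \leq m \leq n$; the degenerate object $\grid{1}{n} = P_n$ is a path, hence diametrical, but it is not a grid in this sense.

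For the bulk of the parameter range, namely $3 \leq m \leq n$, I would invoke the vertex-count criterion of Corollary~\ref{cor:StrongMo}. Here $|V| = mn$ and $\diam{\grid{m}{n}} = m+n-2$, so it suffices to verify the hypothesis $\lceil mn/2 \rceil > m+n-2$. This is equivalent to $mn > 2m + 2n - 4$, that is, to $(m-2)(n-2) > 0$, which holds whenever $m,n \geq 3$. Thus Corollary~\ref{cor:StrongMo} applies and shows that $\grid{m}{n}$ is non-diametrical for all $m,n \geq 3$.

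The remaining, and most delicate, case is the ladder $\grid{2}{n}$ with $n \geq 3$, where the counting bound only yields $\lceil 2n/2 \rceil = n = \diam{\grid{2}{n}}$ and therefore cannot conclude; this is precisely the boundary at which the counting heuristic would even misclassify the diametrical graph $\grid{2}{2}$. Here I would instead write down an explicit broadcast: set $f(v_{1,1}) = f(v_{2,1}) = n-1$ and $f(v) = 0$ otherwise. Using that the distance from $v_{1,1}$ to $v_{i,j}$ in the ladder is $(i-1)+(j-1)$, the broadcaster $v_{1,1}$ reaches every vertex except $v_{2,n}$ (at distance $n > n-1$), while $v_{2,1}$ reaches every vertex except $v_{1,n}$, so together they dominate $\grid{2}{n}$. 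The broadcast is minimal because $v_{1,n}$ is a private neighbor of $v_{1,1}$ at distance exactly $n-1$ and $v_{2,n}$ is a private neighbor of $v_{2,1}$ at distance exactly $n-1$: each of these two corners is heard by a single broadcaster (the other broadcaster lies at distance $n$), so lowering either strength uncovers the corresponding corner. Its cost is $2(n-1)$, and $2(n-1) > n$ exactly when $n > 2$; hence $\Gamma_b(\grid{2}{n}) \geq 2(n-1) > \diam{\grid{2}{n}}$ for all $n \geq 3$, and these ladders are non-diametrical.

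Combining the three cases shows that $\grid{2}{2}$ is the unique diametrical grid. I expect the main obstacle to be the ladder family $\grid{2}{n}$: the vertex-count lemma returns equality there and so is exactly inconclusive, so the argument hinges on producing the correct minimal broadcast. The key idea is to stack two broadcasters of strength $n-1$ in the first column so that each reaches all but the opposite far corner, leaving those two corners as the private neighbors that force minimality. Verifying minimality — that decreasing either broadcaster genuinely uncovers a vertex — is the step most prone to error, and the one I would write out in full.
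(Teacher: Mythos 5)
Your proof is correct, but it is organized differently from the paper's. The paper uses a single uniform construction for all $m,n$: it sets $f(v_{i,1})=n-1$ on \emph{every} vertex of the first column, obtaining a minimal broadcast of cost $m(n-1)$, and then reduces the diametricality condition $m+n-2\geq m(n-1)$ algebraically to $(m-1)(n-2)\leq 0$, which forces $m=1$ (a path) or $G_{2,2}$. Your ladder broadcast with $f(v_{1,1})=f(v_{2,1})=n-1$ is exactly this construction specialized to $m=2$, and your verification of minimality via the two far corners as private neighbors is actually more explicit than the paper's one-line assertion that the column broadcast is minimal. Where you genuinely diverge is the bulk case $m,n\geq 3$: you invoke the vertex-count criterion of Corollary~\ref{cor:StrongMo} via $\lceil mn/2\rceil > m+n-2 \iff (m-2)(n-2)>0$, whereas the paper never needs it because its column broadcast already has cost $m(n-1)>m+n-2$ whenever $m\geq 2$ and $n\geq 3$. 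Your case split buys a sanity check on the counting heuristic (you correctly observe it is exactly inconclusive on ladders and would even misclassify $\grid{2}{2}$ if applied with non-strict inequality as stated in Corollary~\ref{cor:StrongMo}), but it also makes the argument lean on that corollary, whose justification in the paper is itself somewhat delicate; extending your explicit column broadcast from $m=2$ to all $m\geq 2$ would give you the paper's shorter, self-contained argument. Your handling of the boundary cases ($\grid{2}{2}=C_4$ diametrical by Corollary~\ref{corcycle}, $\grid{1}{n}=P_n$ excluded as a degenerate grid) matches the paper's intent.
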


\begin{proof} We assume that without loss of generality that $1 \leq m \leq n$.
Label the vertices of the grid $G_{m,n} = P_m \square P_n$ by $V=\{v_{1,1},v_{1,2}, \ldots, v_{1,n}, v_{2,1}, 
\ldots v_{2,n}, \ldots , v_{1,n}, \ldots, v_{n,n} \}$ and define a broadcast $f$ on $G_{m,n}$ so that $f(v_{i,1})= n-1$.  
This broadcast is 
minimal, which shows that $\Gamma_b(G_{m,n}) \geq m(n-1)$.  
Also note that the diameter of the grid $G_{m,n}$ is $\diam{G_{m,n}} = m+n-2$.
Therefore, in order for $\diam{P_m\square P_n}=\Gamma_b(P_m\square
P_n)$ we must have
\begin{eqnarray*}
m+n-2 \geq m \cdot (n-1) & \iff & m+n-2=mn-m \\
 & \iff & 2m-mn = 2-n \\
 & \iff & (2-n)m = 2-n \\
& \iff & (n-2)(m-1)=0
\end{eqnarray*}
Hence $G_{m,n}$ is diametrical if and only if $m=1$ and $n \in \mathbb{Z}$, that is, a path $P_n$,
or when $n=2$ and $1 \leq m \leq 2$, that is, the grid $G_{2,2}$.
\end{proof}

\subsection{Diametrical Graphs with Non-diametrical Subgraphs}

In section~\ref{sectionfourpointone}, we are able to classify diametrical trees.  In this section, we consider another obvious type of graph, the cycle.  
By Corollary~\ref{corcycle}, we know that the cycle is non-diametrical for all $n\geq 6$.
Our goal was to take the cycle and modify it by adding paths to vertices on opposite (or near opposite if the cycle is odd) sides of the cycle in an effort to increase the diameter of the graph.  
The hope was that the diameter would increase at a greater pace than the upper broadcast domination number.  We observe this in the example below.  
One can check that although this pattern of modification to create a diametrical graph from a cycle works for a few cycles, it breaks down once we consider $C_{12}$.

It should be noted that the following example shows that there are diametrical graphs 
such that they have non-diametrical subgraphs.  In this case the non-diametrical subgraph is that of $C_6$.

\begin{ex}\label{C6modified}
Note that the cycle $C_6$ is non-diametrical while the graph below, $G'$, is diametrical.  By adding leaves to opposite sides of 
the 
cycle, we increase the diameter of the graph from 3 to 5.  The upper broadcast domination number of the cycle $C_6$ is 4 while 
the 
upper broadcast domination number of $G'$ is 5.  We  note here that the spanning trees of $G'$ are also diametrical.
\end{ex}

\begin{center}
\begin{picture}(100,50)(0,30)

\put(50,50){\circle{50}}
\put(30,50){\circle*{3}}
\put(70,50){\circle*{3}}
\put(30,50){\line(-1,0){10}}
\put(20,50){\circle*{3}}
\put(70,50){\line(1,0){10}}
\put(80,50){\circle*{3}}
\put(63,65){\circle*{3}}
\put(37,65){\circle*{3}}
\put(37,35){\circle*{3}}
\put(63,35){\circle*{3}}
\end{picture}
\end{center}

\section{Open Questions}\label{open}

We conclude this paper with a list of open questions raised by our results or 
restated from references. Many of these questions may serve as good primers for research projects
with master's and undergraduate students.

\begin{question}
Characterize classes of graphs (other than trees) for which $\Gamma_b(G)=\diam{G}$.
\end{question}

\begin{question} Consider two  invariants which are incomparable when considering arbitrary graphs; see ~\cite{DEHHH05}  for a list of  invariants.  Do there 
exist classes of graphs 
for 
which the invariants are comparable? If so what is the comparison?
\end{question}

\begin{question}\cite{DEHHH05}
If only limited broadcast powers are allowed for a graph, that is, a $k$-limited broadcast domination number $\gamma_{kb}(G)$, 
what can be said about the invariant?  
\end{question}

\begin{question}\cite{DEHHH05}
What can you say about the class of minimum cost dominating broadcasts, where the number of broadcast vertices is a minimum (or 
maximum)?
\end{question}

\begin{question}
What can be said about the upper domination number and upper broadcast domination number of the product or strong  product of 
cycles?
\end{question}

\begin{question}
Classify the diametrical graphs $G$ for which $G$ is the cartesian product of graphs.
\end{question}

\section{Acknowledgments}
We would like to thank the Society for the Advancement of Chicanos and Native Americans in Science 
(SACNAS), Shannon Talbott and 
Pamela Harris for creating the SACNAS Collaborative Minigrants which provided travel support for this project through  the NSA award (H98230-15-1-0091) and NSF award DMS-1545136.
We also thank Mohamed Omar for useful conversations.

\end{document}